\newcommand\scalemath[2]{\scalebox{#1}{\mbox{\ensuremath{\displaystyle #2}}}}
\newtheorem{theorem}{Theorem}[section]
\newtheorem{proposition}[theorem]{Proposition}
\newtheorem{corollary}[theorem]{Corollary}
\newtheorem{lemma}[theorem]{Lemma}
\newtheorem{remark}[theorem]{Remark}
\numberwithin{theorem}{section} \numberwithin{equation}{section}
\newcommand{\hpg}[5]{{}_{#1}F_{#2}\! \left(\left.{\genfrac{}{}{0pt}{}{#3}{#4}}\right| #5 \right) }
\newcommand{\app}[4]{F_{#1}\! \left(\left.{\genfrac{}{}{0pt}{}{#2}{#3}}\right| #4 \right) }
\begin{document}
\title[Counting Rational Points on Kummer surfaces]{Counting Rational Points on Kummer surfaces}
\author{Andreas Malmendier}
\email{andreas.malmendier@usu.edu}
\author{Yih Sung}
\email{yih.sung@usu.edu}
\address{Department of Mathematics and Statistics, Utah State University, Logan, UT 84322}
\begin{abstract}
We consider the problem of counting the number of rational points on the family of Kummer surfaces associated with two non-isogenous elliptic curves. For this two-parameter family we prove Manin's unity, using the presentation of the Kummer surfaces as isotrivial elliptic fibration and as double cover of the modular elliptic surface of level two. By carrying out the rational point-count with respect to either of the two elliptic fibrations explicitly, we obtain an interesting new identity between two-parameter counting functions. 
\end{abstract}
\subjclass[2010]{14D0x, 14J28, 33C65}
\maketitle
\section{Introduction}
\subsection{Background}
In the theory of algebraic curves, Manin's celebrated unity theorem \cite{MR1946768}*{Sec.~2.12}, provides a connection between certain period integrals for families of algebraic curves and the number of rational points over finite fields $\mathbb{F}_p$ on them. The correspondence is established using the Gauss-Manin connection and the holomorphic solution for the resulting Picard-Fuchs equation. The classical case is the Legendre family $\mathcal{X}_\lambda$ of elliptic curves defined by
\begin{equation}
\label{eqn:Legendre}
\mathcal{X}_\lambda=\Big\{(x,y) \in \mathbb{C}^2 \mid y^2=x(x-1)(x-\lambda)\Big\} \,, 
\end{equation}
where $\lambda\in \mathbb{C} -\{0,1\}=\mathbb{P}^1-\{0,1,\infty\}$. Since the family has only one parameter, the solution of the Picard-Fuchs equation involves a special function of one variable. In fact, it is a classical result by Igusa~\cite{MR0098728} that the number $|\mathcal{X}_\lambda|_p$ of rational points of $\mathcal{X}_\lambda$ over the finite field $\mathbb{F}_p$ is given by
\begin{equation}
\label{eqn:2F1trunc}
\begin{split}
|\mathcal{X}_\lambda|_p &\equiv -(-1)^{\frac{(p-1)}{2}} \sum_{r=0}^{\frac{p-1}{2}} \binom{-\frac{1}{2}}{r}^2 \lambda^r \mod p \\
&=-(-1)^{\frac{(p-1)}{2}} \hpg{2}{1,\frac{p-1}{2}}{\frac{1-p}{2},\frac{1-p}{2}}{1}{\lambda}\mod p \,,
\end{split}
\end{equation}
where ${}_{2}F_{1,N}$ is the \emph{$N$-truncated} Gauss hypergeometric series. Based on this idea, many authors have investigated the relation between point counting functions and special functions on different symmetric groups and curves, for example, the counting method is explored in \cite{MR0098728} and \cite{MR700577}, triangle groups in \cite{MR3613974} and \cite{MR2873147} and other generalization in \cite{MR2926554}, and \cite{MR3127904}. In this paper, we will establish a form of Manin’s unity principle for a family of algebraic surfaces with \emph{two} moduli parameters, namely the Kummer surface associated with two non-isogenous elliptic curves. By combining the detailed study of the associated Picard-Fuchs system in \cites{MR3767270, MR3798883} with arithmetic techniques, we explore novel arithmetic and geometric aspects of Manin's unity.
\par  Kummer surfaces owe many remarkable properties to the special role that they play in string theory and string dualities \cite{MR1479699}. It is a fundamental fact that K3 surfaces and Kummer surfaces in particular depend holomorphically on parameters that determine the complex structure and K\"ahler class. The variation of the complex structure of a K3 surface is naturally studied through the periods of its (unique) holomorphic $(2,0)$-form over a basis of transcendental homology cycles \cite{MR1860046}. These periods play a fundamental role in the geometry of the parameter space, in mirror symmetry, and F-theory/heterotic string duality, as well as Seiberg-Witten theory \cites{MR3366121, MR2854198}.  In the seminal work of Candelas et.~al \cite{MR2019149} arithmetic properties of the periods of the famous Dwork pencil of quintic threefolds were derived, and it was shown that for an understanding of a quantum version of the congruence zeta function arithmetic properties of the periods are crucial. This article establishes fundamental arithmetic properties for the period integrals of the full two-parameter family of Kummer surfaces associated with two non-isogenous elliptic curves. Though lower dimensional, this problem is challenging as these periods involve generalized hypergeometric functions in several variables, such as Appell hypergeometric series. This work is meant to lay the foundation for future work investigating congruence zeta functions in the context of mirror symmetry for lattice polarized K3 surfaces which is given by Arnold's strange duality \cite{MR1420220}.
\subsection{Statement of results}
Let us state our main results. Let $\mathcal{X}_{\lambda_1,\lambda_2}$ be the two-parameter family of Kummer surfaces associated with the two non-isogenous elliptic curves $E_1$ and $E_2$ with modular lambda parameters $\lambda_1$ and $\lambda_2$, respectively. Denote by $|\mathcal{X}_{\lambda_1,\lambda_2}|_p$ the number of rational points of the affine part of $\mathcal{X}_{\lambda_1,\lambda_2}$ (defined using a natural Jacobian elliptic fibration on $\mathcal{X}$) over the finite field $\mathbb{F}_p$. We will prove:
\begin{theorem}[Manin's unity theorem for Kummer Surfaces]
\label{thm:main1_intro}
The counting function $F(\lambda_1,\lambda_2)=|\mathcal{X}_{\lambda_1,\lambda_2}|_p$ satisfies the Picard-Fuchs system associated with $\mathcal{X}_{\lambda_1,\lambda_2}$.
\end{theorem}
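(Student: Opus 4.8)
The plan is to compute $F(\lambda_1,\lambda_2)\bmod p$ explicitly from an elliptic fibration of $\mathcal{X}_{\lambda_1,\lambda_2}$ and then recognize the answer as the mod-$p$ reduction of the holomorphic solution of the Picard--Fuchs system. Concretely, I would use the isotrivial Jacobian fibration $\pi\colon\mathcal{X}_{\lambda_1,\lambda_2}\to\mathbb{P}^1$ coming from the projection $E_1\times E_2\to E_1\to E_1/\{\pm1\}$; its generic fibre is a copy of $E_2$, and it has four fibres of Kodaira type $I_0^\ast$ over the branch points $t\in\{0,1,\lambda_1,\infty\}$ of the degree-two map $E_1\to\mathbb{P}^1$, with monodromy the quadratic character branched there. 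Over $\mathbb{F}_p$ this means the fibre over $t$ is the quadratic twist of the Legendre curve $\mathcal{X}_{\lambda_2}$ by $\chi\!\big(t(t-1)(t-\lambda_1)\big)$, so that the affine count splits as
\[
F(\lambda_1,\lambda_2)\;=\;\sum_{t\in\mathbb{F}_p\setminus\{0,1,\lambda_1\}}\!\big(p-\chi\!\big(t(t-1)(t-\lambda_1)\big)\,a_p(E_2)\big)\;+\;\sum_{t\in\{0,1,\lambda_1\}}\#\{(x,y)\mid y^2=W_t(x)\},
\]
$W_t$ being the Weierstrass cubic of the fibration. The three finite additive fibres degenerate to cuspidal cubics $y^2=x^3$, each of which has $p\equiv0\bmod p$ affine points; and since $\sum_{t\in\mathbb{F}_p}\chi\!\big(t(t-1)(t-\lambda_1)\big)=-a_p(E_1)$ while $a_p(E_i)\equiv(-1)^{\frac{p-1}{2}}\,\hpg{2}{1}{\frac12,\frac12}{1}{\lambda_i}\bmod p$ by Igusa's congruence \eqref{eqn:2F1trunc}, the sum collapses to
\[
F(\lambda_1,\lambda_2)\;\equiv\;\hpg{2}{1}{\frac12,\frac12}{1}{\lambda_1}\cdot\hpg{2}{1}{\frac12,\frac12}{1}{\lambda_2}\pmod p .
\]

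Next I would invoke the Picard--Fuchs data of \cites{MR3767270,MR3798883}: since the periods of the holomorphic $(2,0)$-form on $\mathrm{Km}(E_1\times E_2)$ are products of the elliptic periods of $E_1$ and $E_2$, the system is generated by the two commuting Legendre operators $L_i=\lambda_i(1-\lambda_i)\partial_{\lambda_i}^2+(1-2\lambda_i)\partial_{\lambda_i}-\tfrac14$, with holomorphic solution $\Omega=\hpg{2}{1}{\frac12,\frac12}{1}{\lambda_1}\hpg{2}{1}{\frac12,\frac12}{1}{\lambda_2}$; the presentation of $\mathcal{X}_{\lambda_1,\lambda_2}$ as a double cover of the modular elliptic surface of level two is the geometric input that makes these operators transparent. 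By the previous display it then remains to check $L_1\Omega\equiv L_2\Omega\equiv0\bmod p$. The coefficient of $\lambda_1^a\lambda_2^b$ in $\Omega$ is the $p$-integral number $\binom{-1/2}{a}^2\binom{-1/2}{b}^2$, and the key elementary fact is that $\binom{-1/2}{r}\equiv0\bmod p$ for $\frac{p+1}{2}\le r\le p-1$, because $(1/2)_r$ then acquires the factor $p$ while $r!$ does not; hence modulo $p$ the series $\Omega$ agrees with its truncation at degree $\frac{p-1}{2}$ in each variable. Applying $L_1$ to that truncated polynomial reproduces the vanishing three-term recursion of the Legendre coefficients in every degree except the top one $a=\frac{p-1}{2}$, where the leftover contribution is $\big(\tfrac{p-1}{2}+\tfrac12\big)^2=\tfrac{p^2}{4}$ times a $p$-integral number and so vanishes modulo $p$ --- precisely the one-variable computation underlying \eqref{eqn:2F1trunc}, now run in the slot $\lambda_1$, and symmetrically for $L_2$. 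Therefore $L_1F\equiv L_2F\equiv0\bmod p$, which is the assertion.

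The step I expect to cost the most effort is the geometric bookkeeping rather than the differential one: one must fix the natural Weierstrass model so that the four $I_0^\ast$ fibres really do reduce to cuspidal cubics (otherwise the bad-fibre term is a genuine correction that has to be shown annihilated by $L_1$ and $L_2$), and handle the fibre over $t=\infty$ together with the fibrewise point at infinity uniformly in $p$. The more substantial --- and genuinely two-parameter --- point is to repeat the count on the double-cover presentation, where $F(\lambda_1,\lambda_2)\bmod p$ emerges instead as a truncated two-variable, Appell-type, hypergeometric sum. Equating the two closed forms, both congruent to $F$, produces the two-parameter identity announced in the abstract and gives an independent check of the theorem; and verifying directly that this Appell-type truncation solves $L_1=L_2=0$ modulo $p$, without passing through the product form, is the truly several-variable incarnation of Igusa's argument, which is where the generalized-hypergeometric nature of the periods makes the problem harder than the classical one-parameter case.
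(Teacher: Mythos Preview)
Your approach is correct but genuinely different from the paper's. The paper proves the theorem by the abstract Manin mechanism lifted to surfaces: it first identifies $|\mathcal{X}|_p\bmod p$ with a coefficient $c_{p-1}$ coming from the Frobenius action on $H^2(\mathcal{O}_{\mathcal{X}})$ (via Serre duality, the holomorphic Lefschetz formula, and a singular-fibre tally on the $\mathcal{J}_6$ model), and then observes that since the Picard--Fuchs operators annihilate the holomorphic two-form up to exact terms, pulling back by Frobenius forces $L_{\lambda_i}(c_{p-1})\equiv 0\bmod p$. No explicit formula for $|\mathcal{X}|_p$ is needed or produced at this stage. You instead compute the count directly on the isotrivial $\mathcal{J}_4$ model, collapse it via Igusa's congruence to the product of two truncated ${}_2F_1$'s, and then verify the differential equations by hand on that polynomial using the boundary term $(\tfrac{p-1}{2}+\tfrac12)^2=p^2/4$. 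Your route is more elementary---no sheaf cohomology, no Lefschetz---and it delivers the explicit congruence $|\mathcal{X}|_p\equiv {}_2F_1(\lambda_1)\,{}_2F_1(\lambda_2)$ as a free byproduct; the paper's route is more conceptual, explains \emph{why} the count and the periods obey the same system, and applies uniformly to any affine model once the fibre bookkeeping is redone.

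Two minor points to tighten. First, the paper actually \emph{defines} $|\mathcal{X}_{\lambda_1,\lambda_2}|_p$ using the affine $\mathcal{J}_6$ equation, not $\mathcal{J}_4$; to match that statement you must note that the two affine counts agree $\bmod\,p$, which follows from exactly the singular-fibre arithmetic you flag (four $I_0^\ast$'s on $\mathcal{J}_4$ give the same total correction as two $I_2^\ast$'s and four $I_2$'s on $\mathcal{J}_6$). Second, in the model of Equation~\eqref{eqn:J4} the fibres over $t\in\{0,1,\lambda_1\}$ degenerate to the double line $y_{12}^2=0$, not to cuspidal cubics; both have $p\equiv 0$ affine points, so your count is unaffected, but the description should be adjusted.
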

\par What makes this surface case more challenging is that the family $\mathcal{X}_{\lambda_1,\lambda_2}$ depends on two parameters instead one, so that the Picard-Fuchs equation is replaced by a system of coupled linear differential equations with solutions that are special functions of two variables. One way of constructing this Picard-Fuchs differential system is to use an elliptic fibration with section on the family of Kummer surfaces.  It is well known that the Kummer surface associated with two non-isogenous elliptic curves admits several inequivalent elliptic fibrations with section \cite{MR1013073}. We focus on two particularly simple elliptic fibrations on $\mathcal{X}_{\lambda_1,\lambda_2}$ to construct the Picard-Fuchs system, namely the isotrivial elliptic fibration and the fibration induced by a double cover of a certain modular elliptic surface.  A key step in the proof of the Theorem~\ref{thm:main1_intro} is -- similar to the algebraic curve case studied in \cite{MR3613974} -- to properly account for the contributions from the singular fibers of each elliptic fibration. The equivalence of the two presentations of the Picard-Fuchs system (arising from the two inequivalent elliptic fibrations) and their periods is then governed by a well-known reduction formula by Barnes and Bailey that implies the factorization of Appell's generalized hypergeometric series into a product of two Gauss' hypergeometric functions. By carrying out the rational point-count with respect to either of those two elliptic fibrations, we obtain an interesting new identity between rational point-counting functions. We will prove the following:
\begin{theorem}[Identity of special functions]
\label{thm:main2_intro}
Let $p$ be an odd prime, $k_1, k_2 \in \mathbb{Q}$ with $k_1+k_2\not= 0$, and
$$
  (z_1, z_2) = \left(\frac { 4\, k_1k_2}{ \left( k_1+k_2 \right) ^{2}},  -{\frac { \left( k^2_1-1 \right)\left( k^2_2-1 \right)  }{ \left( k_1+k_2 \right) ^{2}}}\right) \;.
$$
Then, we have the following identity
\begin{equation}
\label{eqn:thm12}
\begin{split}
&\sum_{m+n=\frac{p-1}{2}} \sum_{i+j+k+\ell=\frac{p-1}{2}}
\begin{pmatrix}
\frac{p-1}{2} \\
\scriptstyle i\; j\; k\; \ell
\end{pmatrix}
\begin{pmatrix}
\frac{p-1}{2} \\
\scriptstyle m-i\; m-j\; n-k\; n-\ell
\end{pmatrix}
(k_1^2)^i (k_2^2)^j (k_1^2 k_2^2)^k \\
&\equiv (-1)^{\frac{p-1}{2}}\sum_{i+j+k=\frac{p-1}{2}}
\begin{pmatrix}
\frac{p-1}{2} \\
\scriptstyle i\; j\; k
\end{pmatrix}
\begin{pmatrix}
\frac{p-1}{2} \\
\scriptstyle i
\end{pmatrix}
\begin{pmatrix}
\frac{p-1}{2} \\
\scriptstyle j
\end{pmatrix}
z_1^i z_2^j
\mod p,
\end{split}
\end{equation}
where $\binom{N}{a_1\; a_2\; \cdots\; a_r}=\frac{N!}{a_1!\,a_2!\,\cdots\, a_r!}$ with $a_1+a_2+\cdots+a_r=N$ stands for the multinomial coefficients and where both sides are evaluated to the same element of $\mathbb{F}_p$ whenever well defined.
\end{theorem}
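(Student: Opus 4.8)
The plan is to evaluate the rational point count $F(\lambda_1,\lambda_2)=|\mathcal{X}_{\lambda_1,\lambda_2}|_p$ twice, once for each of the two elliptic fibrations on $\mathcal{X}_{\lambda_1,\lambda_2}$, and to identify the two resulting closed forms with the two sides of \eqref{eqn:thm12}. Throughout, write $N=\tfrac{p-1}{2}$, take $\lambda_i$ to be the square of the Jacobi modulus $k_i$, and recall the elementary congruence $\binom{N}{r}\equiv\binom{-1/2}{r}\pmod{p}$, by which Igusa's formula \eqref{eqn:2F1trunc} expresses the mod-$p$ count of a Legendre-type elliptic fiber as an $N$-truncated Gauss hypergeometric series.

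First I would carry out the count along the isotrivial fibration of $\mathcal{X}_{\lambda_1,\lambda_2}$. Its fibers are controlled by Igusa's formula \eqref{eqn:2F1trunc}, and summing the fiberwise counts over the base $\mathbb{P}^1(\mathbb{F}_p)$ --- using the standard evaluations $\sum_{t\in\mathbb{F}_p}t^m\equiv-1$ or $0\pmod{p}$ --- and then subtracting the contributions of the reducible fibers, of the fiber over $t=\infty$, and of the zero section, produces the left-hand side of \eqref{eqn:thm12}. One recognizes this closed form as the product $\,{}_2F_{1,N}\bigl(\tfrac{1-p}{2},\tfrac{1-p}{2};1;k_1^2\bigr)\,{}_2F_{1,N}\bigl(\tfrac{1-p}{2},\tfrac{1-p}{2};1;k_2^2\bigr)$ of two $N$-truncated Gauss series, one in each Jacobi modulus. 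The delicate part of this step is to read off the Kodaira types of the reducible fibers from the vanishing loci of the discriminant in $(k_1,k_2)$ and to verify that the number of $\mathbb{F}_p$-points in the subtracted locus is a polynomial in $p$ whose residue mod $p$ is exactly what is removed; this is the two-parameter analogue of the curve computation in \cite{MR3613974}.

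Next I would repeat the count along the fibration obtained as a double cover of the modular elliptic surface of level two, whose base $\mathbb{P}^1$ carries the moduli $(z_1,z_2)$. The same fiberwise use of \eqref{eqn:2F1trunc}, summation over the base, and removal of the (now different) configuration of reducible fibers and of the fiber at infinity yields, up to the sign $(-1)^N$, the degree-$N$ truncation of the Appell series $F_2\bigl(\tfrac12;\tfrac12,\tfrac12;1,1;z_1,z_2\bigr)$ --- that is, the right-hand side of \eqref{eqn:thm12}. That the two closed forms are congruent mod $p$ is then forced by their common interpretation as the residue of $|\mathcal{X}_{\lambda_1,\lambda_2}|_p$ for one and the same surface: the classical reduction formula of Barnes and Bailey, as used in \cites{MR3767270,MR3798883}, factors $F_2\bigl(\tfrac12;\tfrac12,\tfrac12;1,1;z_1,z_2\bigr)$ into a product of two Gauss functions in $k_1^2$ and $k_2^2$ precisely under the substitution $(z_1,z_2)=\bigl(\tfrac{4k_1k_2}{(k_1+k_2)^2},\,-\tfrac{(k_1^2-1)(k_2^2-1)}{(k_1+k_2)^2}\bigr)$, and this factorization is exactly the statement that the two presentations of the Picard--Fuchs system --- the ``product'' system of the isotrivial fibration and the $F_2$-system of the modular double cover --- coincide after this change of variables (cf.\ Theorem~\ref{thm:main1_intro}), so that the non-hypergeometric parts of the two counts are matched. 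One finally notes that $z_1,z_2$, hence the right-hand side of \eqref{eqn:thm12}, are defined exactly when $k_1+k_2\not\equiv0\pmod{p}$; in that range $(k_1+k_2)^{p-1}\equiv1$ clears the denominators $(k_1+k_2)^{-2i-2j}$, and a short Legendre-symbol computation identifies the remaining algebraic prefactor with the sign $(-1)^N$ displayed in \eqref{eqn:thm12}.

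I expect the main obstacle to be the singular-fiber bookkeeping for both fibrations: classifying the reducible fibers, computing their local $\mathbb{F}_p$-point contributions and that of the fiber over $t=\infty$, and verifying that after these subtractions the two closed forms --- which coincide only as functions on the locus where $k_1+k_2\not\equiv0\pmod{p}$, not as polynomials and not via any identity in characteristic zero --- take the same value in $\mathbb{F}_p$. A secondary point demanding care is the behaviour along the loci where the substitution $(k_1,k_2)\mapsto(z_1,z_2)$ degenerates, so that no spurious terms are introduced and the congruence is claimed only where both sides are well defined.
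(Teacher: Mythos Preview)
Your plan rests on the premise that both sides of \eqref{eqn:thm12} arise as point counts of the \emph{same} surface $\mathcal{X}_{\lambda_1,\lambda_2}$ via two fibrations, but this is not what happens in the paper. The left-hand multinomial sum is indeed $|\mathcal{X}_{\lambda_1,\lambda_2}|_p$, obtained however from the $\mathcal{J}_6$ model~\eqref{eqn:J6_orig}, \emph{not} from the isotrivial fibration $\mathcal{J}_4$; the isotrivial count factors as a product of two truncated Gauss series, which is a different-looking expression (and showing it agrees with the multinomial sum would itself be a separate identity). More seriously, the right-hand side is not a count on $\mathcal{X}$ at all: it is (up to the sign) the count $|\mathcal{Z}_{z_1,z_2}|_p$ on the \emph{different} K3 surface $\mathcal{Z}_{z_1,z_2}$, related to $\mathcal{X}$ only through a degree-two rational cover $\mathcal{X}\dasharrow\mathcal{Y}$ followed by a quadratic twist $\mathcal{Y}\cong\mathcal{Z}$. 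A $2{:}1$ rational map gives no direct relation between $\mathbb{F}_p$-point counts, so ``same surface, two fibrations'' cannot bridge the two sides. Invoking Barnes--Bailey to pass from the Gauss product to the Appell series is also circular here: that reduction is a characteristic-zero identity of power series, and its truncated mod-$p$ analogue is essentially the content of the theorem.

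The paper closes this gap differently. It proves Manin's unity separately for $\mathcal{X}$ and for $\mathcal{Z}$ (hence $\mathcal{Y}$): the counting functions $|\mathcal{X}_{\lambda_1,\lambda_2}|_p$ and $|\mathcal{Y}_{\lambda_1,\lambda_2}|_p$ each satisfy, modulo $p$, the common Appell Picard--Fuchs system (the systems agree because $\Psi^*\omega=\Omega$). Uniqueness up to scalar of the holomorphic solution then forces $|\mathcal{X}|_p\equiv c\,|\mathcal{Y}|_p$; the twist gives $|\mathcal{Y}|_p\equiv(-1)^{N}(k_1+k_2)^{p-1}|\mathcal{Z}|_p\equiv(-1)^{N}|\mathcal{Z}|_p$; and the constant $c$ is determined to be $1$ by specializing $k_1=1$, $k_2=0$ (so $z_1=z_2=0$) and evaluating both explicit sums. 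Your singular-fiber bookkeeping and Legendre-symbol computation for the sign are correct pieces of the puzzle, but the connecting step you need is Manin's unity plus a specialization to pin down the constant, not a second fibration on $\mathcal{X}$.
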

\subsection{Contents}
 This article is structured as follows: in Section~\ref{sec:hpg} we review some basic facts about the Appell and Gauss Hypergeometric function, and a reduction formula by Barnes and Bailey that implies the factorization of Appell's generalized hypergeometric series into a product of two Gauss' hypergeometric functions, a multivariate generalization of Clausen's Formula. In Section~\ref{sec:K3s} we introduce the two-parameter families $\mathcal{X}_{\lambda_1,\lambda_2}, \mathcal{Y}_{\lambda_1,\lambda_2}, \mathcal{Z}_{z_1,z_2}$ of elliptic K3 surfaces and compute their Picard-Fuchs systems and period integrals. The family $\mathcal{X}_{\lambda_1,\lambda_2}$ is the family of Kummer surfaces associated with two non-isogeneous elliptic curves.  On $\mathcal{X}_{\lambda_1,\lambda_2}$ we consider two elliptic fibrations with section, an isotrivial elliptic fibration and an elliptic fibration induced from the double cover of a modular elliptic surface, and the explicit bi-rational transformations between them.  Over the moduli space $\mathcal{M}$ of unordered pairs of elliptic curves with level-two structure, each surface $\mathcal{X}_{\lambda_1,\lambda_2}$ admits a rational double cover onto a certain K3 surface $\mathcal{Y}_{\lambda_1,\lambda_2}$. Over a covering space $\widetilde{\mathcal{M}}$ of the moduli space $\mathcal{M}$, each surface $\mathcal{Y}_{\lambda_1,\lambda_2}$ is isomorphic to a K3 surface $\mathcal{Z}_{z_1,z_2}$ that has a simple affine model in the form of a twisted Legendre pencil. In Section~\ref{sec:counting} we will prove Manin's unity for the families of Kummer surfaces $\mathcal{X}_{\lambda_1,\lambda_2}$ and K3 covers $\mathcal{Z}_{z_1,z_2}$, and compute the number of rational points over the finite field $\mathbb{F}_p$ on both utilizing a counting technique adapted to the elliptic fibrations. In Section~\ref{sec:proofs} we prove Theorem~\ref{thm:main1_intro} and Theorem~\ref{thm:main2_intro}.  
 \par We remark that our work raises the following question: in \cite{MR3798883} formulas we derived that constructed all inequivalent Jacobian elliptic fibrations on the Kummer surface of two non-isogeneous elliptic curves  from extremal rational elliptic surfaces by rational base transformations and quadratic twists. Each such decomposition was then shown to yield a description of the Picard-Fuchs system satisfied by the periods of the holomorphic two-form as either a tensor product of two Gauss' hypergeometric differential equations, an Appell hypergeometric system, or a GKZ differential system. This suggests that a construction similar to the one of this article could be carried out for all eleven inequivalent Jacobian elliptic fibrations and yield similar new identities between two-parameter counting functions. This will be the subject of a forthcoming article of the authors.
\subsection*{Acknowledgments}
We would like to thank the reviewers for their thoughtful comments and efforts towards improving our manuscript. The first author acknowledges support from the Simons Foundation through grant no.~202367 and support from a Research Catalyst grant by the Office of Research and Graduate Studies at Utah State University. 
\bigskip
\section{Hypergeometric functions}
\label{sec:hpg}
\subsection{Appell and Gauss Hypergeometric functions}
\label{Appell}
Appell's hypergeometric function $F_2$ is defined by the following double hypergeometric series
\begin{eqnarray} \label{appf1}
\app2{\alpha;\;\beta_1,\beta_2}{\gamma_1,\gamma_2}{z_1,\,z_2} = \sum_{m=0}^{\infty} \sum_{n=0}^{\infty}
\frac{(\alpha)_{m+n}\,(\beta_1)_m\,(\beta_2)_n}{(\gamma_1)_m\,(\gamma_2)_n\;m!\,n!}\,z_1^m\,z_2^n 
\end{eqnarray}
that converges absolutely for $|z_1|+|z_2| <1$. The series is a bivariate generalization of the Gauss' hypergeometric series
\begin{equation} \label{gausshpg}
\hpg21{\alpha,\,\beta}{\gamma}{z} = \sum_{n=0}^{\infty} 
\frac{(\alpha)_{n}\,(\beta)_n}{(\gamma)_n\,n!}\,z^n \,,
\end{equation}
that converges absolutely for $|z| <1$ where $(a)_k = \Gamma(a+k)/\Gamma(a)$ is the Pochhammer symbol.   Outside of the radius of convergence we can define both ${}_{2}F_{1}$ and $F_2$  by their analytic continuation. As a multi-valued function of $z$ or $z_1$ and $z_2$, the function ${}_{2}F_{1}$ or $F_2$, respectively, are analytic everywhere except for possible branch loci. For ${}_{2}F_{1}$, the possible branch points are located at $z=0$, $z=1$ and $z=\infty$. For $F_2$, the possible branch loci are the union of the following lines
\begin{equation} \label{app2sing}
z_1=0,\quad z_1=1,\quad z_1=\infty, \quad z_2=0,\quad z_2=1,\quad z_2=\infty, \quad z_1+z_2=1.
\end{equation}
We call the branch obtained by introducing a cut from $1$ to $\infty$ on the real $z$-axis or the real $z_1$- and $z_2$-axes, respectively, the principal branch of  ${}_{2}F_{1}$ and $F_2$, respectively. The principal branches of  ${}_{2}F_{1}$ and $F_2$ are entire functions in $\alpha, \beta$ or $\alpha, \beta_1, \beta_2$, and meromorphic in $\gamma$ or $\gamma_1, \gamma_2$ with poles for $\gamma, \gamma_1, \gamma_2 =0, -1, -2, \dots$. Except where indicated otherwise we always use principal branches. We set $\vec{\alpha}=(\alpha,\beta,\gamma)$ and $\vec{\alpha}=(\alpha,\beta_1,\beta_2,\gamma_1,\gamma_2)$, respectively.
\par The differential equation satisfied by $F={}_{2}F_{1}$ is
\begin{equation} \label{eq:euler}
L_{z}^{\vec{\alpha}}(F) := z\big(1-z\big)\,\frac{d^2F}{dz^2}+
\big(\gamma-(\alpha+\beta+1)\, z\big)\frac{dF}{dz}-\alpha\beta\,F=0 \,.
\end{equation}
It is a Fuchsian\footnote{A Fuchsian (differential) equation is a linear homogeneous ordinary differential equation with analytic coefficients in the complex domain whose singular points are all regular singular points.}  differential equation with three singular point at $z=0$, $z=1$ and $z=\infty$ that are regular singular points with local exponent differences equal to $1-\gamma$, $\gamma-\alpha-\beta$, and $\alpha-\beta$, respectively. Appell's function $F_2$ satisfies a Fuchsian system of partial differential equations analogous to the hypergeometric equation for the function ${}_{2}F_{1}$. The system of partial differential equations satisfied by $F=F_2$ is given by
\begin{equation} \label{app2system}
\scalemath{0.8}{
\begin{aligned}
L^{(1),\vec{\alpha}}_{z_1,z_2}(F) :=  z_1\big(1-z_1\big)\frac{\partial^2F}{\partial z_1^2}-z_1z_2\frac{\partial^2F}{\partial z_1\partial z_2}
+\big(\gamma_1-(\alpha+\beta_1+1)\, z_1\big)\frac{\partial F}{\partial z_1}-\beta_1z_2 \, \frac{\partial F}{\partial z_2}
-\alpha \beta_1\, F=0 \,,\\
L^{(2),\vec{\alpha}}_{z_1,z_2}(F) := z_2\big(1-z_2\big)\frac{\partial^2F}{\partial z_2^2}-z_1z_2\frac{\partial^2F}{\partial z_1\partial z_2}
+\big(\gamma_2-(\alpha+\beta_2+1)\, z_2\big)\frac{\partial F}{\partial z_2}-\beta_2  z_1 \, \frac{\partial F}{\partial z_1}
- \alpha \beta_2 \, F=0\,.
\end{aligned}}
\end{equation}
\par This is a holonomic system of rank 4 whose singular locus on $\mathbb{P}^1\times\mathbb{P}^1$ is the union of the lines in~(\ref{app2sing}).  We simply write $F_2(z_1,z_2)$ and $L^{(1)}_{z_1,z_2}, L^{(2)}_{z_1,z_2}$, or ${}_{2}F_{1}(z)$ and $L_z$ for the special functions and differential operators with parameters $2\alpha=2\beta_1=2\beta_2=\gamma_1=\gamma_2=1$ or $2\alpha=2\beta=\gamma=1$, respectively.  We define the $N$-truncated Appell series to be
\begin{equation}
\label{eqn:2Ftrunc}
F_{2,N}(z_1,z_2)=\sum_{k=0}^{N} \sum_{m+n=k} \frac{(\frac{1}{2})_{m+n}(\frac{1}{2})_m(\frac{1}{2})_n}{(1)_m(1)_n\, m!\, n!} \,z_1^m\,z_2^n \,.
\end{equation}
We have the following:
\begin{corollary}
The truncated Appell series $F_{2,\frac{p-1}{2}}(z_1,z_2)$ satisfies
\begin{equation}
 L^{(1)}_{z_1,z_2} F_{2,\frac{p-1}{2}}(z_1,z_2) =  L^{(2)}_{z_1,z_2} F_{2,\frac{p-1}{2}}(z_1,z_2) \equiv 0 \mod p \,,
 \end{equation}
for parameters $2\alpha=2\beta_1=2\beta_2=\gamma_1=\gamma_2=1$.
\end{corollary}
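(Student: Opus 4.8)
The plan is to apply the operators $L^{(1)}_{z_1,z_2}$ and $L^{(2)}_{z_1,z_2}$ directly to the polynomial $F_{2,\frac{p-1}{2}}$ monomial by monomial, exploiting the fact that, for the \emph{untruncated} series, the identities $L^{(1),\vec\alpha}_{z_1,z_2}F_2=L^{(2),\vec\alpha}_{z_1,z_2}F_2=0$ are equivalent to two-term recursions among the coefficients $a_{m,n}=\frac{(\alpha)_{m+n}(\beta_1)_m(\beta_2)_n}{(\gamma_1)_m(\gamma_2)_n\,m!\,n!}$ of the series~(\ref{appf1}). First I would record the elementary computation
\[
L^{(1),\vec\alpha}_{z_1,z_2}\bigl(z_1^m z_2^n\bigr)
= m\,(m-1+\gamma_1)\,z_1^{m-1}z_2^n-(m+\beta_1)(m+n+\alpha)\,z_1^m z_2^n ,
\]
together with the analogous formula for $L^{(2),\vec\alpha}_{z_1,z_2}$ obtained by exchanging $(z_1,m,\beta_1,\gamma_1)$ with $(z_2,n,\beta_2,\gamma_2)$. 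The crucial structural feature is that $L^{(1),\vec\alpha}_{z_1,z_2}$ either preserves the total degree $m+n$ of a monomial or lowers it by one. Hence, applied to the partial sum $F_{2,N}=\sum_{m+n\le N}a_{m,n}z_1^m z_2^n$, the coefficient of $z_1^m z_2^n$ with $m+n\le N-1$ equals $a_{m+1,n}(m+1)(m+\gamma_1)-a_{m,n}(m+\beta_1)(m+n+\alpha)$, which vanishes identically: it is exactly the recursion satisfied by the $a_{m,n}$, equivalently the vanishing of the interior coefficients of $L^{(1),\vec\alpha}_{z_1,z_2}F_2$. No monomial of total degree $\ge N+1$ occurs, so the only surviving contribution is the "boundary" layer $m+n=N$, where the degree-lowering term has no partner to cancel against:
\[
L^{(1),\vec\alpha}_{z_1,z_2}F_{2,N}(z_1,z_2)=-(N+\alpha)\!\!\sum_{m+n=N}(m+\beta_1)\,a_{m,n}\,z_1^m z_2^n .
\]

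Now specialize to $\alpha=\beta_1=\beta_2=\tfrac12$, $\gamma_1=\gamma_2=1$ and $N=\tfrac{p-1}{2}$. The prefactor becomes $N+\alpha=\tfrac{p-1}{2}+\tfrac12=\tfrac p2$, i.e.\ $p$ times the $p$-adic unit $\tfrac12$ (here $p$ is odd). It then remains to see that the surviving coefficients are $p$-adic integers, so that the whole right-hand side is divisible by $p$: for $m+n=\tfrac{p-1}{2}$ one has $m,n,m+n<p$, so the denominator $(m!)^2(n!)^2=(1)_m(1)_n\,m!\,n!$ is prime to $p$, while $(\tfrac12)_k=\frac{(2k)!}{4^k k!}$ for $k\in\{m,n,m+n\}$ is a $p$-adic unit since $2k\le p-1<p$; hence $a_{m,n}=\frac{(\frac12)_{m+n}(\frac12)_m(\frac12)_n}{(m!)^2(n!)^2}$ is a $p$-adic unit and $(m+\tfrac12)\,a_{m,n}$ a $p$-adic integer. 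Therefore $L^{(1)}_{z_1,z_2}F_{2,\frac{p-1}{2}}(z_1,z_2)\equiv 0\bmod p$; and since the parameters and the truncation layer are symmetric under $z_1\leftrightarrow z_2$, $m\leftrightarrow n$, the identical argument gives $L^{(2)}_{z_1,z_2}F_{2,\frac{p-1}{2}}(z_1,z_2)\equiv 0\bmod p$.

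The one point that needs genuine care — and the only place where "$p$ an odd prime" enters — is the bookkeeping of $p$-integrality: one must check both that the specialized operators introduce no denominator divisible by $p$ (indeed $\gamma_1=1$ and the constants $\alpha+\beta_1+1=2$, $\beta_1=\tfrac12$, $\alpha\beta_1=\tfrac14$ are $p$-units), and that the degree-$\tfrac{p-1}{2}$ coefficients $a_{m,n}$ are $p$-integral, as above. Everything else is a direct bivariate transcription of the classical Legendre computation behind~(\ref{eqn:2F1trunc}), where the very same mechanism — truncating at $N=\tfrac{p-1}{2}$ forces the surviving prefactor to be $N+\alpha=\tfrac p2$ — produces the congruence $L_z\,{}_2F_{1,\frac{p-1}{2}}(z)\equiv 0\bmod p$. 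I do not expect any serious obstacle beyond this.
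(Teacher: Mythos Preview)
Your argument is correct and follows the same strategy as the paper's proof: both observe that applying $L^{(i)}_{z_1,z_2}$ to the truncated series leaves only boundary terms in total degree $N=\frac{p-1}{2}$, since the interior terms cancel by the defining recursion of the Appell coefficients. The paper's one-sentence proof stops at ``the equations are satisfied up to $O(z_1^m z_2^n)$ for $m+n=N$'' without explaining why those surviving terms vanish modulo $p$; your explicit identification of the prefactor $N+\alpha=\tfrac{p}{2}$ and the verification that the boundary coefficients $(m+\tfrac12)\,a_{m,n}$ are $p$-integral supplies exactly the missing step.
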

\begin{proof}
One observes that the truncated  Appell series fulfills the system of differential equations~(\ref{app2system}) up to $O(z_1^m z_2^n)$ for
$m+n=N$ because the differential equations are linear.
\end{proof}
\par For $\operatorname{Re}(\gamma)>\operatorname{Re}(\beta)>0$, the Gauss' hypergeometric function ${}_{2}F_{1}$ has the integral representation (see \cite{MR0167642}*{Sec.~15.3}) given by
\begin{equation}
\hpg21{\alpha,\,\beta}{\gamma}{z} = \frac{\Gamma(\gamma)}{\Gamma(\beta) \, \Gamma(\gamma-\beta)} \, \int_0^1 \frac{dx}{x^{1-\beta} \, (1-x)^{1+\beta-\gamma} \, (1- z\, x)^{\alpha}} \;.
\end{equation}
The following is a well-known generalization for the Appell hypergeometric function; see Erd\'elyi et al \cite{MR0058756}*{Sec.~5.8}:
\begin{lemma}
For $\operatorname{Re}{(\gamma_1)} > \operatorname{Re}{(\beta_1)} > 0$ and $\operatorname{Re}{(\gamma_2)} > \operatorname{Re}{(\beta_2)} > 0$, we have the following integral representation for Appell's hypergeometric series
\begin{equation}
\label{IntegralFormula}
\begin{split}
\app2{\alpha;\;\beta_1,\beta_2}{\gamma_1,\gamma_2}{z_1,\,z_2} = \frac{\Gamma(\gamma_1) \, \Gamma(\gamma_2)}{\Gamma(\beta_1) \, \Gamma(\beta_2) \, \Gamma(\gamma_1 - \beta_1) \, \Gamma(\gamma_2-\beta_2)} \quad \qquad\\
\times \, \int_0^1 dv \int_0^1 dx \; 
\frac{1}{v^{1-\beta_2} \, (1-v)^{1+\beta_2-\gamma_2} \, x^{1-\beta_1} \, (1-x)^{1+\beta_1-\gamma_1} \, (1-z_1 \, x - z_2 \, v)^{\alpha}} \;.
\end{split}
\end{equation}
\end{lemma}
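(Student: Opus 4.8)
The plan is to expand the kernel $(1-z_1x-z_2v)^{-\alpha}$ as a double power series in $z_1,z_2$, integrate term by term against the two Beta weights, and recognize the result as the defining series~(\ref{appf1}). First I would work on the polydisc $|z_1|+|z_2|<1$: for $(x,v)\in[0,1]^2$ one has $|z_1x+z_2v|\le|z_1|+|z_2|<1$, so the generalized binomial series converges absolutely and uniformly on $[0,1]^2$, and the multinomial expansion of $(z_1x+z_2v)^k$ combined with $\binom{k}{m}(\alpha)_k/k!=(\alpha)_{m+n}/(m!\,n!)$ for $m+n=k$ gives
\[
(1-z_1x-z_2v)^{-\alpha}=\sum_{m=0}^{\infty}\sum_{n=0}^{\infty}\frac{(\alpha)_{m+n}}{m!\,n!}\,(z_1x)^m(z_2v)^n \,.
\]

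Next I would substitute this into the double integral and interchange summation and integration. This is justified by uniform convergence on the compact square $[0,1]^2$ once the weights $x^{\beta_1-1}(1-x)^{\gamma_1-\beta_1-1}$ and $v^{\beta_2-1}(1-v)^{\gamma_2-\beta_2-1}$ are integrable — which is precisely what the hypotheses $\operatorname{Re}(\gamma_1)>\operatorname{Re}(\beta_1)>0$ and $\operatorname{Re}(\gamma_2)>\operatorname{Re}(\beta_2)>0$ ensure. Each summand then factors into a product of Euler Beta integrals,
\[
\int_0^1 x^{m+\beta_1-1}(1-x)^{\gamma_1-\beta_1-1}\,dx=\frac{\Gamma(m+\beta_1)\,\Gamma(\gamma_1-\beta_1)}{\Gamma(m+\gamma_1)}\,,\qquad \int_0^1 v^{n+\beta_2-1}(1-v)^{\gamma_2-\beta_2-1}\,dv=\frac{\Gamma(n+\beta_2)\,\Gamma(\gamma_2-\beta_2)}{\Gamma(n+\gamma_2)}\,,
\]
and using $\Gamma(m+\beta_1)/\Gamma(\beta_1)=(\beta_1)_m$, $\Gamma(m+\gamma_1)/\Gamma(\gamma_1)=(\gamma_1)_m$ (and similarly in the second variable) the Gamma prefactor on the right-hand side cancels all the extraneous factors, leaving exactly $\sum_{m,n}\frac{(\alpha)_{m+n}(\beta_1)_m(\beta_2)_n}{(\gamma_1)_m(\gamma_2)_n\,m!\,n!}\,z_1^mz_2^n$, i.e.\ $F_2(z_1,z_2)$.

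Finally I would remove the restriction $|z_1|+|z_2|<1$ by analytic continuation: the left-hand side is the principal branch of $F_2$, holomorphic off the singular lines~(\ref{app2sing}), and the integral on the right depends holomorphically on $(z_1,z_2)$ away from those same lines (differentiation under the integral sign, the integrand being jointly analytic there), so the identity proved on the polydisc propagates to the full common domain by the identity theorem. The only genuinely delicate point is the justification of the term-by-term integration; the rest is bookkeeping with Pochhammer symbols. Alternatively one could iterate the single-variable Euler integral for ${}_{2}F_{1}$ recorded above, but the direct double-series computation is the most transparent route.
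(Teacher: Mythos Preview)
Your argument is correct and is in fact the standard derivation: expand $(1-z_1x-z_2v)^{-\alpha}$ as a double binomial series, integrate term by term against the two Euler Beta weights, and identify the resulting coefficients with those of~(\ref{appf1}). The only step worth tightening is the interchange of summation and integration: uniform convergence of the series alone does not suffice when the weights $x^{\beta_1-1}(1-x)^{\gamma_1-\beta_1-1}$ and $v^{\beta_2-1}(1-v)^{\gamma_2-\beta_2-1}$ are unbounded near the endpoints; the clean justification is Tonelli's theorem applied to the nonnegative majorant obtained by replacing $z_1,z_2$ with $|z_1|,|z_2|$, which converges by the same Beta-integral computation.

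As for comparison with the paper: the paper does not prove this lemma at all. It records the integral representation as a classical fact, attributes it to Erd\'elyi et al.~\cite{MR0058756}*{Sec.~5.8}, and closes with a bare \qedsymbol. Your write-up therefore supplies strictly more than the paper does; the route you take is exactly the one found in the cited reference, so there is no methodological divergence to discuss.
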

\qed
\subsection{Quadratic relation between solutions}
In \cites{MR960834, MR960835} Sasaki and Yoshida studied several systems of linear differential equations in two variables holonomic of rank~4.
Using a differential geometric technique they determined in terms of the coefficients of the differential equations the \emph{quadric property} condition that the four linearly independent solutions are quadratically related.
For Appell's hypergeometric system the quadric condition is as follows:
\begin{proposition}[Sasaki, Yoshida \cite{MR960834}*{Sec.~5.5}]
\label{SY}
Appell's hypergeometric system satisfies the quadric property if and only if
\begin{equation}
\label{QuadricProperty}
 \alpha= \beta_1 + \beta_2 - \frac{1}{2}, \; \gamma_1 = 2\beta_1\, \; \gamma_2 = 2\beta_2 \;.
\end{equation}
In particular, the quadric property is satisfied for $2\alpha=2\beta_1=2\beta_2=\gamma_1=\gamma_2=1$.
\end{proposition}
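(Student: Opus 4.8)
The plan is to unwind the definition of the \emph{quadric property} into a rank statement for a symmetric-square system, impose that statement on the explicit operators $L^{(1),\vec{\alpha}}_{z_1,z_2}, L^{(2),\vec{\alpha}}_{z_1,z_2}$ of~(\ref{app2system}), and then record the elementary check for the special parameters $2\alpha=2\beta_1=2\beta_2=\gamma_1=\gamma_2=1$. First I would fix a simply connected domain $U\subset\mathbb{P}^1\times\mathbb{P}^1$ disjoint from the singular locus~(\ref{app2sing}) and a basis $u_1,u_2,u_3,u_4$ of solutions of~(\ref{app2system}) on $U$, giving the Schwarz map $\mathcal{S}\colon U\to\mathbb{P}^3$, $(z_1,z_2)\mapsto[u_1:u_2:u_3:u_4]$. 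By definition the system has the quadric property when $\mathcal{S}(U)$ is contained in a nondegenerate quadric $\{Q=0\}\subset\mathbb{P}^3$ for a constant-coefficient symmetric bilinear form $Q$; equivalently, the ten pairwise products $u_iu_j$ with $1\le i\le j\le 4$ are $\mathbb{C}$-linearly dependent. Those products span the solution space of the symmetric-square holonomic system $\operatorname{Sym}^2(L^{(1)},L^{(2)})$, whose holonomic rank is at most $\binom{5}{2}=10$, with equality generically; hence the quadric property is equivalent to this rank dropping to at most $9$.

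Next I would make the rank drop explicit. Writing~(\ref{app2system}) as an integrable connection $\nabla=d-\Omega$ on the trivial rank-$4$ bundle over $(\mathbb{P}^1\times\mathbb{P}^1)\setminus(\text{singular locus})$, with $\Omega$ the matrix of rational $1$-forms read off from the coefficients in~(\ref{app2system}), I would pass to the induced connection on $\operatorname{Sym}^2(\mathbb{C}^4)\cong\mathbb{C}^{10}$ and search for a horizontal section, i.e.\ a covariantly constant quadratic form. The equations $\nabla Q=0$ are linear in the entries of $Q$ and polynomial in $\vec{\alpha}=(\alpha,\beta_1,\beta_2,\gamma_1,\gamma_2)$, and existence of a nonzero solution $Q$ forces a set of polynomial constraints on $\vec{\alpha}$ which, on solving, should return precisely~(\ref{QuadricProperty}), namely $\alpha=\beta_1+\beta_2-\tfrac12$, $\gamma_1=2\beta_1$, $\gamma_2=2\beta_2$. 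This is exactly the computation carried out in \cite{MR960834} via a differential-geometric normal form in which the obstruction appears as the vanishing of a specific covariant tensor, and I would cite it for the ``only if'' direction as well as the general ``if''; an independent sanity check is to exhibit the flat nondegenerate $Q$ directly on this locus.

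For the ``in particular'' clause nothing beyond arithmetic is needed, together with a shortcut. At $2\alpha=2\beta_1=2\beta_2=\gamma_1=\gamma_2=1$ one has $\beta_1+\beta_2-\tfrac12=\tfrac12=\alpha$, $2\beta_1=1=\gamma_1$, and $2\beta_2=1=\gamma_2$, so these parameters satisfy~(\ref{QuadricProperty}). Independently --- and this is the route aligned with the Barnes--Bailey reduction of Section~\ref{sec:hpg} --- at these parameters the solution space of~(\ref{app2system}) is the tensor product $V_1\otimes V_2$, where $V_i$ is the $2$-dimensional solution space of the Euler equation~(\ref{eq:euler}) in $z_i$ with $\alpha=\beta=\tfrac12,\ \gamma=1$; choosing bases $\{f_1,f_2\}\subset V_1$ and $\{g_1,g_2\}\subset V_2$, the four solutions $u_{ab}=f_ag_b$ obey the single quadratic relation $u_{11}u_{22}-u_{12}u_{21}\equiv 0$, so $\mathcal{S}(U)$ lies on the Segre quadric $\{x_{11}x_{22}=x_{12}x_{21}\}$ and the quadric property holds.

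The main obstacle is the middle step: pinning down exactly when the symmetric-square system loses rank, equivalently when a covariantly constant quadratic form exists --- this is the genuinely computational input supplied by the Sasaki--Yoshida criterion. By contrast, the reduction of ``quadric property'' to that rank condition is formal, and for the parameter values actually used in this paper the verification is immediate from the factorization recorded above, so in practice I would cite \cite{MR960834} for the general equivalence and give only the short direct argument for $2\alpha=2\beta_1=2\beta_2=\gamma_1=\gamma_2=1$.
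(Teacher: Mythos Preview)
The paper does not prove this proposition at all: it is stated as a citation to Sasaki--Yoshida \cite{MR960834}*{Sec.~5.5} with no accompanying proof environment, and the ``in particular'' clause is left as an immediate arithmetic check. Your proposal is therefore already more detailed than what the paper supplies. You correctly identify that the substantive content---the characterization of when the symmetric-square system drops rank, equivalently when a covariantly constant quadric exists---lives in the cited reference, and you rightly plan to cite it rather than reproduce it.

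Where you go beyond the paper is in the independent verification for $2\alpha=2\beta_1=2\beta_2=\gamma_1=\gamma_2=1$ via the tensor-product structure $V_1\otimes V_2$ and the Segre quadric $u_{11}u_{22}=u_{12}u_{21}$. This is a genuine addition: the paper only records the arithmetic $\beta_1+\beta_2-\tfrac12=\tfrac12=\alpha$, etc., whereas your argument exhibits the quadratic relation directly and ties it to the factorization underlying Theorem~\ref{thm1} and Lemma~\ref{lem:PF_J4}. That is a cleaner and more self-contained route for the only case the paper actually uses, and it makes transparent why the quadric property here is the Hodge--Riemann-type relation mentioned just after the proposition.
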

\par Geometrically, the quadric condition corresponds to one of the Hodge-Riemann relations for a polarized Hodge structure. Vidunas derived a formula in \cite{MR2514458}*{Eq.~(35)} that relates the Appell series to two copies of the hypergeometric functions. Here, we present his formula and correct a typographic error. Equation~(\ref{F2periodc}) is Bailey's famous reduction formula for the Appell series $F_4$ \cite{MR0111866}*{Eq.~(1.1)} when combined with another result of Bailey's relating the Appell series $F_2$ and $F_4$ \cite{MR1574535}*{Eq.~(3.1)}:
\begin{theorem}[Multivariate Clausen Identity]
\label{thm1}
For $\operatorname{Re}{(\beta_1)}, \operatorname{Re}{(\beta_2)} > 0$, $|z_1|+|z_2|<1$, $|k^2_1|<1$, and $|1 - k^2_2|<1$,
Appell's hypergeometric series factors into two Gauss' hypergeometric functions according to
\begin{equation}
\label{F2periodc}
\begin{split}
 &  \; \, \app2{\beta_1 + \beta_2 - \frac{1}{2};\;\beta_1, \; \beta_2}{2 \beta_1, \; 2\beta_2}{z_1, \, z_2}  \\
=   \big(k_1 + k_2\big)^{2\beta_1+2\beta_2-1}  \; & \hpg21{\beta_1 + \beta_2 -\frac{1}{2},\;\beta_2}{\beta_1 + \frac{1}{2}}{k_1^2}  \; \hpg21{\beta_1 + \beta_2 -\frac{1}{2},\;\beta_2}{2 \beta_2}{1-k_2^2} \;,
\end{split}
\end{equation}
with
$$
  (z_1, z_2) = \left(\frac { 4\, k_1k_2}{ \left( k_1+k_2 \right) ^{2}},  -{\frac { \left( k^2_1-1 \right)\left( k^2_2-1 \right)  }{ \left( k_1+k_2 \right) ^{2}}}\right) \;.
$$ 
\end{theorem}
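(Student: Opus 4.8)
The plan is to deduce~(\ref{F2periodc}) by chaining together two classical reduction formulas of Bailey, which reproduces Vidunas's identity \cite{MR2514458}*{Eq.~(35)} once its typographic error is corrected; with those two inputs in hand, the remaining work is the bookkeeping of variables, prefactors, and branches.

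\emph{Step 1: recombine the two Gauss' functions into an $F_4$.}
Recall Appell's fourth series $\app4{a,b}{c,c'}{x,y}=\sum_{m,n\ge 0}\frac{(a)_{m+n}(b)_{m+n}}{(c)_m(c')_n\,m!\,n!}\,x^m\,y^n$ and Bailey's reduction formula \cite{MR0111866}*{Eq.~(1.1)}, which in the resonant case $c+c'=a+b+1$ reads
\begin{equation}
\label{eqn:baileyF4}
\hpg21{a,\,b}{c}{x}\;\hpg21{a,\,b}{c'}{y}\;=\;\app4{a,b}{c,c'}{x(1-y),\,y(1-x)}\,.
\end{equation}
With $a=\beta_1+\beta_2-\frac{1}{2}$, $b=\beta_2$, $c=\beta_1+\frac{1}{2}$, $c'=2\beta_2$ the constraint $c+c'=\beta_1+2\beta_2+\frac{1}{2}=a+b+1$ holds, and the choice $x=k_1^2$, $y=1-k_2^2$ turns the arguments on the right of~(\ref{eqn:baileyF4}) into $x(1-y)=k_1^2\,k_2^2$ and $y(1-x)=(1-k_1^2)(1-k_2^2)$. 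Hence the product of Gauss' functions on the right-hand side of~(\ref{F2periodc}) equals
$$
(k_1+k_2)^{2\beta_1+2\beta_2-1}\;\app4{\beta_1+\beta_2-\frac{1}{2},\;\beta_2}{\beta_1+\frac{1}{2},\;2\beta_2}{k_1^2\,k_2^2,\;(1-k_1^2)(1-k_2^2)}\,.
$$

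\emph{Step 2: pull the $F_4$ back to an $F_2$.}
It then remains to identify this last expression with $\app2{\beta_1+\beta_2-\frac{1}{2};\;\beta_1,\beta_2}{2\beta_1,\,2\beta_2}{z_1,z_2}$. I would invoke Bailey's relation between $F_2$ and $F_4$ \cite{MR1574535}*{Eq.~(3.1)}, which --- precisely in the resonant case $\gamma_1=2\beta_1$, $\gamma_2=2\beta_2$ singled out by the quadric property of Proposition~\ref{SY} --- expresses $\app2{\alpha;\;\beta_1,\beta_2}{2\beta_1,2\beta_2}{z_1,z_2}$ as an explicit algebraic prefactor times an $F_4$ with the same parameters $\beta_1+\beta_2-\frac{1}{2},\,\beta_2;\,\beta_1+\frac{1}{2},\,2\beta_2$, evaluated at two arguments that are rational functions of $(z_1,z_2)$. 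Equating those arguments with $k_1^2\,k_2^2$ and $(1-k_1^2)(1-k_2^2)$ from Step~1 forces the dictionary
$$
z_1=\frac{4\,k_1 k_2}{(k_1+k_2)^2}\,,\qquad z_2=-\frac{(k_1^2-1)(k_2^2-1)}{(k_1+k_2)^2}\,,
$$
and the prefactor then works out to $(k_1+k_2)^{2\beta_1+2\beta_2-1}$. The bookkeeping is streamlined by the algebraic identities $1-z_1=\bigl(\frac{k_1-k_2}{k_1+k_2}\bigr)^2$, $1-z_2=\bigl(\frac{k_1 k_2+1}{k_1+k_2}\bigr)^2$ and $1-z_1-z_2=\bigl(\frac{k_1 k_2-1}{k_1+k_2}\bigr)^2$, which let one rewrite the algebraic prefactor of Bailey's $F_2$--$F_4$ relation (built from $1-z_1$, $1-z_2$ and $1-z_1-z_2$) as a power of $k_1+k_2$; they also display $(k_1,k_2)\mapsto(z_1,z_2)$ as resolving the branch lines $z_1=1$, $z_2=1$, $z_1+z_2=1$ of the singular locus~(\ref{app2sing}) into perfect squares, the structural counterpart of the quadric property of Proposition~\ref{SY} that makes the factorization possible.

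\emph{Main obstacle.}
The delicate point is precisely the reconciliation carried out in Step~2: Bailey's two formulas are normally quoted with incompatible normalizations and on different domains of convergence, so one must verify that the $F_4$ arguments, the resonance conditions, and the branch of $(k_1+k_2)^{2\beta_1+2\beta_2-1}$ all line up simultaneously. I would therefore first establish the identity on a small real neighborhood of $(k_1,k_2)=(0,1)$, where $z_1,z_2\to0$ so that $|z_1|+|z_2|<1$, $|k_1^2|<1$ and $|1-k_2^2|<1$ all hold, $k_1+k_2>0$ makes every fractional power unambiguous, and~(\ref{eqn:baileyF4}), the integral representations underlying it and~(\ref{IntegralFormula}), and Bailey's $F_2$--$F_4$ relation all apply literally; since both sides reduce to $1$ at the base point, it then suffices to continue analytically to the domain stated in the theorem across the complement of~(\ref{app2sing}). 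As an independent check --- and the way I would confirm that it is the \emph{corrected} form of Vidunas's formula, rather than his misprint, that holds --- one verifies that the right-hand side of~(\ref{F2periodc}), regarded through the above dictionary as a function of $(z_1,z_2)$, is annihilated by the Appell operators $L^{(1),\vec{\alpha}}_{z_1,z_2}$ and $L^{(2),\vec{\alpha}}_{z_1,z_2}$ of~(\ref{app2system}) with $\alpha=\beta_1+\beta_2-\frac{1}{2}$ and $\gamma_i=2\beta_i$ --- a finite chain-rule computation whose very feasibility is conceptually guaranteed by Proposition~\ref{SY} --- and that the Taylor expansions of the two sides at the origin agree through low order.
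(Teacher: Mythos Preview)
Your proposal is correct and follows exactly the route the paper indicates: the paper does not give a detailed proof of Theorem~\ref{thm1} but simply records, in the sentence preceding the statement, that Equation~(\ref{F2periodc}) is obtained by combining Bailey's reduction formula for $F_4$ \cite{MR0111866}*{Eq.~(1.1)} with Bailey's $F_2$--$F_4$ relation \cite{MR1574535}*{Eq.~(3.1)}, which is precisely your Steps~1 and~2. Your write-up in fact supplies the parameter matching, the algebraic identities for $1-z_1$, $1-z_2$, $1-z_1-z_2$, and the branch/domain discussion that the paper omits.
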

\begin{remark}
In particular, both sides of Equation~(\ref{F2periodc}) satisfy the same system of linear differential equations of rank four.  The functions
$$
 \hpg21{\beta_1 + \beta_2 -\frac{1}{2},\;\beta_2}{\beta_1 + \frac{1}{2}}{k_2^2}  \;\; \text{and} \;\; \hpg21{\beta_1 + \beta_2 -\frac{1}{2},\;\beta_2}{2 \beta_2}{1-k_2^2}
$$
satisfy the same ordinary differential equation but have a different behavior at the ramification points.  Thus, the functions
\begin{equation}
\label{eqn:solutions}
 \app2{ \frac{1}{2};\;\frac{1}{2}, \; \frac{1}{2}}{1, \; 1}{z_1, \, z_2}  \;\; \text{and} \quad \big(k_1+k_2\big) \;
  \hpg21{\frac{1}{2},\;\frac{1}{2}}{1}{k_1^2}  \; \hpg21{\frac{1}{2},\;\frac{1}{2}}{1}{k_2^2} \,,
\end{equation}
both satisfy the differential equations $L^{(1)}_{z_1,z_2} F= L^{(2)}_{z_1,z_2} F=0$.
\end{remark}
\section{Elliptic K3 surfaces and their periods}
\label{sec:K3s}
\subsection{Jacobian elliptic fibrations}
\label{SEC:EllFib}
An elliptic surface is a (relatively) minimal complex surface $\mathcal{X}$ together with a Jacobian elliptic fibration, that is a holomorphic map $\pi: \mathcal{X} \to \mathbb{P}^1$ to $\mathbb{P}^1$ such that the general fiber is a smooth curve of genus one together with a distinguished section $\sigma: \mathbb{P}^1 \to \mathcal{X}$ that marks a smooth point in each fiber. The complete list of possible singular fibers has been given by Kodaira~\cite{MR0165541}.  It encompasses two infinite families $(I_n, I_n^*, n \ge0)$ and six exceptional cases $(II, III, IV, II^*, III^*, IV^*)$. To each Jacobian elliptic fibration $\pi: \mathcal{X} \to \mathbb{P}^1$ there is an associated Weierstrass model $\bar{\pi}: \bar{\mathcal{X}}\to \mathbb{P}^1$  with a corresponding distinguished section $\bar{\sigma}$ obtained by contracting all components of fibers not meeting $\sigma$. The surface $\bar{\mathcal{X}}$ is always singular with only rational double point singularities and irreducible fibers, and $\tilde{\mathcal{X}}$ is the minimal desingularization. If we use $[t_0:t_1] \in \mathbb{P}^1$ as coordinates on the base curve and $[x:y:z] \in \mathbb{P}^2$ as coordinates of the fiber, we can write $\bar{\mathcal{X}}$ in the Weierstrass normal form
\begin{equation}
\label{Eq:Weierstrass}
 y^2 z = 4 \, x^3 - g_2(t_0,t_1) \, x z^2 - g_3(t_0,t_1) \, z^3\;,
\end{equation}
where $g_2$ and $g_3$ are polynomials of degree four and six, or, eight and twelve  if $\mathcal{X}$ is a rational surface or a K3 surface, respectively. It is well known how the type of singular fibers is read off from the orders of vanishing of the functions $g_2$, $g_3$ and the discriminant $\Delta= g_2^3 - 27 \, g_3^2$  at the singular base values \cite{MR0165541}. Note that the vanishing degrees of $g_2$ and $g_3$ are always less or equal to three and five, respectively, as otherwise the singularity of $\bar{\mathcal{X}}$ is not a rational double point.
\par For a Jacobian elliptic surface $\pi: \mathcal{X} \to \mathbb{P}^1$, the two classes in the N\'eron-Severi lattice $\mathrm{NS}(\mathcal{X})$ associated  with the elliptic fiber and the section span a sub-lattice $\mathcal{H}$ isometric to the standard hyperbolic lattice $H$ with the quadratic form $Q=x_1x_2$, and we have the following decomposition  as a direct orthogonal sum
\begin{equation*}
 \mathrm{NS}(\mathcal{X}) = \mathcal{H} \oplus \mathcal{W} \;.
\end{equation*}
An elliptic fibration $\pi$ with section $\sigma$ is called extremal if and only if the rank of the Mordell-Weil group of sections, denoted by $\operatorname{MW}(\pi, \sigma)$,  vanishes, i.e., $\operatorname{rank} \operatorname{MW}(\pi, \sigma)=0$, and the associated elliptic surface has  maximal Picard rank.
\subsection{Kummer surfaces of two non-isogenous elliptic curves}
In \cite{MR1013073}  Oguiso studied the Kummer surface $\operatorname{Kum}(E_1\times E_2)$ obtained by the minimal resolution of the quotient surface of the product abelian surface  $E_1\times E_2$ by the inversion automorphism, where the elliptic curves $E_i$ for $i=1,2$ are not mutually isogenous. Such a Kummer surfaces are algebraic $K3$ surfaces of Picard rank $18$ and can be equipped with Jacobian elliptic fibrations. Each Jacobian elliptic fibration corresponds to a primitive nef element of self-intersection zero in the N\'eron-Severi lattice of the Kummer surface. Oguiso classified all inequivalent Jacobian elliptic fibrations on the Kummer surface associated with two non-isogeneous elliptic curves \cite{MR1013073}: it turns out there are eleven inequivalent Jacobian elliptic fibration which we label $\mathcal{J}_1, \dots, \mathcal{J}_{11}$. Kuwata and Shioda furthered Oguiso's work in \cite{MR2409557} where they computed elliptic parameters and Weierstrass equations for all eleven fibrations, and analyzed the reducible fibers and Mordell-Weil lattices. 
\par These Weierstrass equations in \cite{MR2409557} are in fact families of minimal Jacobian elliptic fibrations over a two-dimensional moduli space. We denote by $\lambda_k \in \mathbb{P}^1 \backslash \lbrace 0, 1, \infty \rbrace$ for $k=1, 2$ the modular parameter for the elliptic curve $E_k$ defined by the Legendre form
\begin{equation}\label{eqn:EC}
 y_k^2  = x_k \big(x_k-1\big) \big(x_k- \lambda_k\big) \;,
\end{equation} 
with hyperelliptic involutions $\imath_k: (x_k, y_k) \mapsto (x_k,-y_k)$.  Working with the Legendre family ensures that the entire Picard group of the Kummer surface is defined over the ground field. The moduli space for the fibrations $\mathcal{J}_1, \dots, \mathcal{J}_{11}$ is then given by unordered pairs of modular parameters of two elliptic curves with level-two structure. In this paper, the two elliptic fibrations $\mathcal{J}_4$ and $\mathcal{J}_6$ will be of particular importance.  Using the Hauptmodul or modular function $\lambda$ of level two for the genus-zero, index-six congruence subgroup $\Gamma(2) \subset \operatorname{PSL}_2(\mathbb{Z})$ we define the moduli space
\begin{equation}
\label{eqn:ModuliSpace}
 \mathcal{M} = \Big\{ \{ \lambda_1, \lambda_2 \} \mid \ \lambda_i=\lambda(\tau_i) \,, \tau_i\in \Gamma(2) \backslash \mathbb{H} \; \text{for $i=1,2$}\Big\}\,,
\end{equation}
where the generator of $\mathbb{Z}/2\mathbb{Z}$ acts by exchanging the two parameters. We also consider the covering space $ \widetilde{\mathcal{M}}$ of the moduli space $\mathcal{M}$ given by
\begin{equation}
\label{eqn:ModuliSpace_cover}
 \widetilde{\mathcal{M}} = \Big\{ \{ k_1, k_2 \}  \mid \  \{\lambda_1=k_1^2,\lambda_2=k_2^2\} \in \mathcal{M} \Big\} \,.
 \end{equation}
\par The simplest fibration on $\mathcal{X}_{\lambda_1, \lambda_2}=\operatorname{Kum}(E_1\times E_2)$ is called the \emph{double Kummer pencil}. It is the elliptic fibration with section, denoted by $\mathcal{J}_4$, induced from the projection of the abelian surface $E_1\times E_2$ onto the first factor. We introduce the new variable $y_{12}=y_1 y_2$ in terms of the variables used in Equation~(\ref{eqn:EC}); an affine model is then given by the product of two copies of Equation~(\ref{eqn:EC}) for $k=1$ and $k=2$,
given by
\begin{equation}\label{eqn:J4}
 y_{12}^2 = x_1 \big(x_1-1\big) \big(x_1 -\lambda_1\big)  x_2 \big(x_2-1\big)\big(x_2-\lambda_2\big) \,,
\end{equation}
where $x_1$ is considered the affine coordinate of the base curve $\mathbb{P}^1$. The unique holomorphic two-form (up to scaling) on $\mathcal{X}_{\lambda_1, \lambda_2}$ is given by $\Omega = dx_1 \wedge dx_2/y_{1,2}$. The fibration is obtained as a pencil of elliptic curves on the quotient variety $(E_1\times E_2)/\langle \imath_1 \times \imath_2 \rangle$ using the invariant coordinate $y_{1,2}=y_1 y_2$. In turn, the quotient variety $(E_1\times E_2)/\langle \imath_1 \times \imath_2 \rangle$ is birational to the Kummer surface $\operatorname{Kum}(E_1\times E_2)$ since the product involution $\imath_1 \times \imath_2$ is the $-\mathbb{I}$ involution on the abelian surface $E_1\times E_2$. We have the following:
\begin{lemma}
Equation~(\ref{eqn:J4}) determines the elliptic fibration with section $\mathcal{J}_4$ on the Kummer surface $\mathcal{X}_{\lambda_1, \lambda_2}=\operatorname{Kum}(E_1\times E_2)$ associated with the elliptic curves $E_1, E_2$ in Equation~(\ref{eqn:EC}). Generically, the Weierstrass model has four singular fibers of Kodaira-type $I_0^*$ for $x_1=0, 1, \lambda_1, \infty$, and a Mordell-Weil group given by $(\mathbb{Z}/2\mathbb{Z})^2$.
\end{lemma}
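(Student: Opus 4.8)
\emph{Proof plan.} The plan is to analyse (\ref{eqn:J4}) with $x_1 = t$ as the base coordinate. The fibre over $t$ is then the plane curve
\begin{equation*}
 y_{12}^2 = f_1(t)\, x_2\,(x_2-1)\,(x_2-\lambda_2), \qquad f_1(t) := t\,(t-1)\,(t-\lambda_1),
\end{equation*}
which is visibly the quadratic twist of the Legendre curve $E_2$ by the value $f_1(t)$; hence the fibration is isotrivial with constant $j$-invariant $j(E_2)$ and carries the section $\sigma$ given by the point at infinity of the plane cubic. I would first put this in Weierstrass form: multiplying through by $f_1^2$ and setting $X = f_1\,x_2$, $\mathfrak{Y} = f_1\,y_{12}$ gives $\mathfrak{Y}^2 = X(X - f_1)(X - \lambda_2 f_1)$, whose cubic has roots $0, f_1, \lambda_2 f_1$ and therefore discriminant $\Delta = c\, f_1(t)^6\,\lambda_2^2(1-\lambda_2)^2$ with $c\in\mathbb{C}^\times$; completing the cube yields $g_2,g_3\in\mathbb{C}[t]$ with $\deg g_2 = 6$ and $\deg g_3 = 9$. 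Since $\lambda_2\notin\{0,1\}$ one has $\deg\Delta = 18\le 24$, so this is a relatively minimal Weierstrass model of an elliptic K3 surface.

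Next I would read the Kodaira types off the valuations. At each of $t = 0,1,\lambda_1$ one has $\operatorname{ord}(g_2)\ge 2$, $\operatorname{ord}(g_3)=3$ and $\operatorname{ord}(\Delta)=6$, which forces type $I_0^*$; passing to the chart $s = 1/t$ and normalising $g_2, g_3$ to sections of $\mathcal{O}_{\mathbb{P}^1}(8)$ and $\mathcal{O}_{\mathbb{P}^1}(12)$ gives $\operatorname{ord}_{s=0}(\Delta) = 24 - 18 = 6$, again type $I_0^*$ (and $v_\infty(g_2)=2,\ v_\infty(g_3)=3$ shows the model is minimal there). Because $\lambda_1\notin\{0,1,\infty\}$ the four base points $0,1,\lambda_1,\infty$ are distinct, so we obtain four $I_0^*$ fibres whose Euler numbers sum to $4\cdot 6 = 24$; hence there are no further singular fibres. (The word ``generically'' only guards against coincidences among these four points, which in fact do not occur here.) That this fibration with its section is Oguiso's $\mathcal{J}_4$ is the content of the discussion preceding the Lemma --- it is the pencil induced by $\operatorname{pr}_1$ on the Kummer surface birational to $(E_1\times E_2)/\langle\imath_1\times\imath_2\rangle$ --- together with this explicit Weierstrass data, which matches \cite{MR2409557}.

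Finally, for the Mordell--Weil group I would combine two inputs. First, the Shioda--Tate formula: using $\rho(\mathcal{X}_{\lambda_1,\lambda_2}) = 18$ and the four $I_0^*$ fibres (each contributing $m_v - 1 = 4$), we get $18 = 2 + 4\cdot 4 + \operatorname{rank}\operatorname{MW}$, so $\operatorname{rank}\operatorname{MW}=0$ and $\operatorname{MW}$ is finite. Second, the three sections $\{x_2 = 0\}$, $\{x_2 = 1\}$, $\{x_2 = \lambda_2\}$ (each with $y_{12}=0$) are clearly $2$-torsion, so $(\mathbb{Z}/2\mathbb{Z})^2\subseteq\operatorname{MW}$. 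For the reverse inclusion, the reduction map at an $I_0^*$ fibre embeds $\operatorname{MW}_{\mathrm{tors}}$ into the component group of the smooth locus of that fibre, which is $(\mathbb{Z}/2\mathbb{Z})^2$ (the kernel of reduction is the formal group, torsion-free in characteristic zero); alternatively, pulling back along the double cover $w^2 = f_1(t)$ --- which is precisely $E_1$ --- trivialises the twist to the constant fibration $E_1\times E_2\to E_1$, and since $E_1\not\sim E_2$ one has $E_2\big(\mathbb{C}(E_1)\big) = E_2(\mathbb{C})$ with trivial $\imath_1$-action, so the sections descending to $\mathbb{C}(t)$ are exactly $E_2[2]$. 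Either way $\operatorname{MW}(\mathcal{J}_4) = (\mathbb{Z}/2\mathbb{Z})^2$.

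The only real obstacle I anticipate is organisational: tracking the valuations of $g_2,g_3,\Delta$ through the coordinate change at $t=\infty$ and confirming minimality there, and spelling out the torsion bound (either the formal-group injectivity of the reduction map, or the Galois descent along $\mathbb{C}(E_1)/\mathbb{C}(t)$). Both are routine once set up.
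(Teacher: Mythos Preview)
Your proposal is correct and follows the same approach as the paper: determine the singular fibres and the Mordell--Weil group directly from the Weierstrass model and match against the classification in \cite{MR2409557}. The paper's proof is a one-line deferral to that computation, whereas you actually carry it out; your only extra input is $\rho(\mathcal{X}_{\lambda_1,\lambda_2})=18$, which is independently known for $\operatorname{Kum}(E_1\times E_2)$ with $E_1\not\sim E_2$ (and is stated just before the Lemma), so no circularity arises even though the paper later rederives $\rho=18$ from this Lemma via Shioda--Tate.
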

\begin{proof}
The result follows by determining the singular fibers and the Mordell-Weil group of sections of the Weierstrass model in Equation~(\ref{eqn:J4}) and comparing these with the results in \cite{MR2409557}.
\end{proof}
The factorization of the holomorphic two-form given by $\Omega = dx_1 \wedge dx_2/y_{1,2}$ with $y_{1,2}=y_1 y_2$ allows to determine the Picard-Fuchs differential system of the family $\mathcal{X}_{\lambda_1, \lambda_2}$. We have the following:
\begin{lemma}
\label{lem:PF_J4}
Every period integral $F(\lambda_1,\lambda_2)$ of the holomorphic two-form $\Omega$ for the family of Kummer surfaces $\mathcal{X}_{\lambda_1, \lambda_2}=\operatorname{Kum}(E_1\times E_2)$ satisfies
\begin{equation}
\label{eqn:PF_J4}
 L_{\lambda_1}F=L_{\lambda_2}F=0 \,,
\end{equation}
where $L_z$ is the homogeneous Fuchsian differential operator given in Equation~(\ref{eq:euler}).  In particular, in the polydisc $\{ (\lambda_1, \lambda_2) \in \mathbb{C}^2 \mid \, |\lambda_1| , |\lambda_2| <1\}$ a holomorphic solution is given by
\begin{equation}
\label{eqn:PF_J4_fct}
 F(\lambda_1,\lambda_2) = \hpg21{\frac{1}{2},\,\frac{1}{2}}{1}{\lambda_1}   \hpg21{\frac{1}{2},\,\frac{1}{2}}{1}{\lambda_2} \;.
\end{equation}
\end{lemma}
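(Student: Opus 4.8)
The plan is to exploit the product structure of both the holomorphic two-form and the transcendental two-cycles, thereby reducing the statement to the classical Legendre Picard-Fuchs equation in one variable. First I would observe that, because $y_{1,2}=y_1y_2$, the two-form on $\mathcal{X}_{\lambda_1,\lambda_2}$ factors as
\[
 \Omega \;=\; \frac{dx_1\wedge dx_2}{y_{1,2}} \;=\; \frac{dx_1}{y_1}\wedge\frac{dx_2}{y_2} \;=\; \omega_1\wedge\omega_2\,,
\]
where $\omega_k=dx_k/y_k$ is the holomorphic one-form on the Legendre elliptic curve $E_k$ of Equation~(\ref{eqn:EC}). The minimal resolution $\mathcal{X}_{\lambda_1,\lambda_2}\to (E_1\times E_2)/\langle\imath_1\times\imath_2\rangle$ replaces the sixteen nodes coming from the two-torsion points of $E_1\times E_2$ by sixteen $(-2)$-curves and is crepant, so (up to the chosen scaling) $\Omega$ is the pull-back of $\omega_1\wedge\omega_2$ from $E_1\times E_2$, and the exceptional curves, being algebraic, pair trivially with $\Omega$. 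Hence every nonzero period of $\Omega$ equals a period of $\omega_1\wedge\omega_2$ over a class in $H_2(E_1\times E_2;\mathbb{Z})$; since $\omega_1\wedge\omega_2$ is of Hodge type $(2,0)$ it pairs nontrivially only with the K\"unneth component $H_1(E_1;\mathbb{Z})\otimes H_1(E_2;\mathbb{Z})$, whose elements are integral combinations of the product cycles $\gamma_i^{(1)}\times\gamma_j^{(2)}$, where $\gamma_1^{(k)},\gamma_2^{(k)}$ is a symplectic basis of $H_1(E_k;\mathbb{Z})$.

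By Fubini's theorem,
\[
 \int_{\gamma_i^{(1)}\times\gamma_j^{(2)}}\omega_1\wedge\omega_2 \;=\; \Big(\int_{\gamma_i^{(1)}}\frac{dx_1}{y_1}\Big)\Big(\int_{\gamma_j^{(2)}}\frac{dx_2}{y_2}\Big)\,,
\]
so any period $F(\lambda_1,\lambda_2)$ of $\Omega$ is a $\mathbb{C}$-linear combination of products $\varpi^{(1)}(\lambda_1)\,\varpi^{(2)}(\lambda_2)$ in which $\varpi^{(k)}$ is a period of $E_k$ and depends on $\lambda_k$ alone. It is a classical fact that every period of $dx_k/y_k$ on the Legendre curve is annihilated, as a function of $\lambda_k$, by the Gauss hypergeometric operator $L_{\lambda_k}$ with $2\alpha=2\beta=\gamma=1$ from Equation~(\ref{eq:euler}) --- this is the Legendre Picard-Fuchs equation, obtained for instance by differentiating the Euler integral representation of ${}_2F_1$ recalled in Section~\ref{sec:hpg}. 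Since $L_{\lambda_1}$ involves only $\partial_{\lambda_1}$, it annihilates each $\varpi^{(1)}(\lambda_1)\,\varpi^{(2)}(\lambda_2)$, and symmetrically for $L_{\lambda_2}$; by linearity $L_{\lambda_1}F=L_{\lambda_2}F=0$, which is Equation~(\ref{eqn:PF_J4}).

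For the explicit holomorphic solution on the polydisc $\{|\lambda_1|,|\lambda_2|<1\}$, I would take for $\varpi^{(k)}$ the period of $dx_k/y_k$ over the one-cycle on $E_k$ that stays bounded as $\lambda_k\to 0$. Applying the Euler integral formula for ${}_2F_1$ with $\alpha=\beta=\tfrac12$, $\gamma=1$ identifies this period with a fixed nonzero multiple of ${}_2F_1(\tfrac12,\tfrac12;1;\lambda_k)$, which is the unique (up to scaling) solution of $L_{\lambda_k}=0$ holomorphic at $\lambda_k=0$. Forming the product and absorbing the overall constant into the normalization of $\Omega$ yields the stated $F(\lambda_1,\lambda_2)={}_2F_1(\tfrac12,\tfrac12;1;\lambda_1)\,{}_2F_1(\tfrac12,\tfrac12;1;\lambda_2)$, which converges absolutely and is holomorphic on the polydisc because each factor is.

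The step I expect to require the most care is the identification of the transcendental periods of the resolved Kummer surface with the product periods on $E_1\times E_2$: one must verify that the birational map $(E_1\times E_2)/\langle\imath_1\times\imath_2\rangle\dashrightarrow \mathcal{X}_{\lambda_1,\lambda_2}$ together with the resolution induces, after normalizing $\Omega$, the expected isomorphism on the transcendental parts of $H^2$ and does not introduce any extra periods (here the non-isogeny of $E_1$ and $E_2$ guarantees that this transcendental lattice has rank four, matching the rank of the holonomic system). Granting this, the factorization of the Picard-Fuchs system and the holomorphic solution follow immediately from Fubini and the one-variable Legendre equation.
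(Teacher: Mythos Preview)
Your argument is correct and follows exactly the approach the paper signals just before the lemma (``The factorization of the holomorphic two-form \ldots allows to determine the Picard-Fuchs differential system''); the paper itself does not spell out a proof but merely cites \cite{MR3798883}*{Lemma~3.2} and \cite{MR3767270}*{Lemma~4.6}, whose content is precisely the product/Fubini reduction to the one-variable Legendre Picard-Fuchs equation that you wrote down. Your added remark on identifying the transcendental periods of the resolved Kummer surface with the K\"unneth classes $H_1(E_1)\otimes H_1(E_2)$ is the right level of care and is not made explicit in the paper.
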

\begin{proof}
The lemma was proven in \cite{MR3798883}*{Lemma~3.2} and \cite{MR3767270}*{Lemma~4.6}.
\end{proof}
\par The birational transformation given by
\begin{equation}
\label{eqn:transfo_fibration}
\begin{split}
 t = \frac{x_1}{x_2} \,, \;
 \tilde{X}  = \frac{(x_1-1)(\lambda_1x_2-\lambda_2x_1)}{x_1(\lambda_2-x_2)(1-\lambda_1)} \,, \;
 \tilde{Y} = \frac{(\lambda_2x_1 - x_2)(\lambda_1x_2-\lambda_2x_1) y_1 y_2}{x_1 x_2^2 (\lambda_2-x_2)^2(1-\lambda_1)} \,,
\end{split} 
\end{equation}
changes Equation~(\ref{eqn:J4}) into the equation
\begin{equation}\label{eqn:J6}
\tilde{Y}^2 = \big(1-\lambda_1\big) \big(1-\lambda_2\big)  t^2  \tilde{X} \big(1-\tilde{X}\big) \left(\tilde{X} - \frac{(t-1)(t\lambda_2-\lambda_1)}{(1-\lambda_1) (1-\lambda_2) t}\right) \,,
\end{equation}
and $\Omega =  dx_1 \wedge dx_2/y_{1,2} = dt \wedge d\tilde{X}/\tilde{Y}$, where $\tilde{X}, \tilde{Y}$ are the affine coordinates of an elliptic fiber, and $t$ is the affine coordinate of a base curve $\mathbb{P}^1$. A section is given by the point at infinity in each fiber. It is easy to show that this fibration is equivalent to the fibration $\mathcal{J}_6$ in \cite{MR2409557}. In fact, Equation~(\ref{eqn:J6}) is birationally equivalent to the defining equation of $\mathcal{J}_6$ on $\mathcal{X}_{\lambda_1, \lambda_2}$, i.e.,
\begin{equation}\label{eqn:J6_orig}
 Y^2=X \Big(X- t\big(t-1\big)\big(\lambda_2 t-\lambda_1\big)\Big) \Big(X-t\big(t-\lambda_1\big)\big(\lambda_2 t-1\big)\Big) \,,
\end{equation}
with
\begin{equation}
 X = \frac{t(t-1)(t\lambda_2-\lambda_1)(\tilde{X}-1)}{\tilde{X}} \,, \quad Y =\frac{t(t-1)(t\lambda_2-\lambda_1)\tilde{Y}}{\tilde{X}} \,, 
\end{equation}
and $\Omega = dt \wedge d\tilde{X}/\tilde{Y} = dt \wedge dX/Y$. We make the following
\begin{remark}
The automorphism of Equation~(\ref{eqn:J6}) given by
\[
 (t, X, Y) \mapsto \left( \frac{1}{t}, \frac{X}{t^4}, - \frac{Y}{t^6} \right)\,,
 \]
leaves $\Omega$ invariant and interchanges the roles $\lambda_1 \leftrightarrow \lambda_2$, that is, after applying the automorphism Equation~(\ref{eqn:J6}) becomes
\begin{equation}\label{eqn:J6_orig_b}
 Y^2=X \Big(X- t\big(t-1\big)\big(\lambda_1 t-\lambda_2\big)\Big) \Big(X-t\big(t-\lambda_2\big)\big(\lambda_1 t-1\big)\Big) \,,
\end{equation}
Hence, we have $\mathcal{X}_{\lambda_1, \lambda_2} \cong \mathcal{X}_{\lambda_2, \lambda_1}$, and $\mathcal{X}_{\lambda_1, \lambda_2}$ is well defined for every point $\{\lambda_1, \lambda_2\} \in \mathcal{M}$ in Equation~(\ref{eqn:ModuliSpace}). From Equation~(\ref{eqn:transfo_fibration}) one can see that this isomorphism is the conjugate of the isomorphism induced by interchanging the variables $x_1$ and $x_2$ in Equation~(\ref{eqn:J4})
\end{remark}
\par We also have the following:
\begin{lemma}
\label{lem:J6}
Equation~(\ref{eqn:J6_orig}) determines the elliptic fibration with section $\mathcal{J}_6$ on the Kummer surface $\mathcal{X}_{\lambda_1, \lambda_2}=\operatorname{Kum}(E_1\times E_2)$ associated with the elliptic curves $E_1, E_2$ in Equation~(\ref{eqn:EC}). Generically, the Weierstrass model has two singular fibers of Kodaira-type $I_2^*$ for $t=0, \infty$, four singular fibers of type $I_2$ for $t=1, \lambda_1, 1/\lambda_2, \lambda_1/\lambda_2$, and a Mordell-Weil group given by $(\mathbb{Z}/2\mathbb{Z})^2$.
\end{lemma}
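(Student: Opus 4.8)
The plan is to follow the same route as for $\mathcal{J}_4$: bring Equation~(\ref{eqn:J6_orig}) into Weierstrass normal form, compute the discriminant and the $j$-invariant, read off the Kodaira types from the standard table, and then cross-check the fiber configuration, the section and the Mordell-Weil group against Oguiso's classification and the explicit data of Kuwata and Shioda in~\cite{MR2409557}.

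First I would write Equation~(\ref{eqn:J6_orig}) as $Y^2 = X(X-a)(X-b)$ with $a = t(t-1)(\lambda_2 t - \lambda_1)$ and $b = t(t-\lambda_1)(\lambda_2 t - 1)$. The key elementary identity is $a - b = (1-\lambda_1)(1-\lambda_2)\,t^2$, whence $ab = t^2(t-1)(t-\lambda_1)(\lambda_2 t - \lambda_1)(\lambda_2 t - 1)$ and the discriminant $\Delta = 16\, a^2 b^2 (a-b)^2$ equals a perfect square times
\[
 t^{8}\,(t-1)^{2}\,(t-\lambda_1)^{2}\,(\lambda_2 t - \lambda_1)^{2}\,(\lambda_2 t - 1)^{2}.
\]
Both $a$ and $b$ have a simple zero at $t=0$ with $a/t$ and $b/t$ nonvanishing there, so the $j$-invariant $j = 256\,(a^2 - ab + b^2)^3 / (a^2 b^2 (a-b)^2)$ has a pole of order exactly $2$ at $t=0$; by Kodaira's table~\cite{MR0165541} this forces the fiber over $t=0$ to be of type $I_2^*$. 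At $t = 1, \lambda_1, \lambda_1/\lambda_2, 1/\lambda_2$ the discriminant vanishes to order $2$ and $j$ remains finite, so these fibers are of type $I_2$. Passing to the chart $t = 1/s$ and rescaling $(X,Y) \mapsto (s^{-4}X, s^{-6}Y)$ --- the substitution appropriate to a K3 elliptic surface --- one finds the fiber over $t = \infty$ is again of type $I_2^*$; the Euler numbers then add up to $8 + 8 + 4\cdot 2 = 24$, which confirms there are no further singular fibers.

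For the section and the Mordell-Weil group, the model $Y^2 = X(X-a)(X-b)$ displays the zero section at infinity together with the three $2$-torsion sections $X = 0$, $X = a$, $X = b$; since $a$ and $b$ are polynomials in $t$ these are defined over $\mathbb{Q}(\lambda_1,\lambda_2)$, so $(\mathbb{Z}/2\mathbb{Z})^2 \subseteq \operatorname{MW}$. The Shioda-Tate formula applied to $\mathcal{X}_{\lambda_1,\lambda_2}$, whose Picard rank is $18$, together with the contributions $2\,(7-1) + 4\,(2-1) = 16$ from the reducible fibers, gives $\operatorname{rank}\operatorname{MW} = 0$; identifying the resulting finite group (and, if needed, its height pairing) with the Mordell-Weil lattice tabulated for $\mathcal{J}_6$ in~\cite{MR2409557} yields $\operatorname{MW} = (\mathbb{Z}/2\mathbb{Z})^2$. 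That this fibration is indeed Oguiso's $\mathcal{J}_6$ is already encoded in the birational transformations~(\ref{eqn:transfo_fibration})--(\ref{eqn:J6_orig}), once the elliptic parameter $t$ and the Weierstrass equation are matched against those of $\mathcal{J}_6$ in~\cite{MR2409557}.

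The step I expect to require the most care is the analysis over $t = \infty$: one must pass to the correct local coordinate and rescale $X$ and $Y$ by precisely the powers of $s$ dictated by the K3 condition, and check minimality of the rescaled Weierstrass model, before the orders of vanishing of $g_2$, $g_3$ and $\Delta$ there may legitimately be fed into Kodaira's classification. The remaining work --- expanding the cubic to extract $g_2$ and $g_3$, factoring $\Delta$, and evaluating $j$ at the finite bad values of $t$ --- is routine.
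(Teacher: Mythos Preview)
Your approach is the same as the paper's---compute the singular fibers from the Weierstrass data and compare with the tables in \cite{MR2409557}---and you supply considerably more detail than the paper does.  There is, however, one genuine slip in the reasoning that you should fix.

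At $t = 1, \lambda_1, 1/\lambda_2, \lambda_1/\lambda_2$ you write that ``$j$ remains finite, so these fibers are of type $I_2$.''  This is backwards: if $v(\Delta)=2$ and $j$ were finite, Kodaira's table would give type $II$, not $I_2$.  In fact at each of those four points exactly one of $a$, $b$ vanishes (to order~$1$) while the other does not, so $a^2 - ab + b^2$ is nonzero there and $j = 256\,(a^2-ab+b^2)^3 / (a^2 b^2 (a-b)^2)$ has a pole of order~$2$; together with $v(g_2)=v(g_3)=0$ this is precisely what forces type $I_2$.  Once you correct that sentence (and, symmetrically, make explicit at $t=0$ that $v(g_2)=2$, $v(g_3)=3$ so that the pole of $j$ of order~$2$ indeed selects $I_2^*$ rather than $I_2$), the argument goes through and agrees with the paper's proof.
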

\begin{proof}
The result follows by determining the singular fibers and the Mordell-Weil group of sections of the Weierstrass model in Equation~(\ref{eqn:J6}) and comparing these with the results in \cite{MR2409557}.
\end{proof}
Here we mention the Shioda-Tate formula. Using either the elliptic fibration in Equation~(\ref{eqn:J4}) or in Equation~(\ref{eqn:J6_orig}) one checks that the Picard rank of the K3 surface is 18 since it is the sum of the ranks of the reducible fibers plus two for the sub-lattice associated with the elliptic fiber and the section; here, the Mordell-Weil group does not contribute to the Picard rank as it is pure torsion in each case. One can then compute the determinant of the discriminant group using either fibration and obtain $4^4/4^2=16$. Hence, the rank of the transcendental lattice is four and its discriminant group has determinant $16$. This matches precisely the characteristics of the transcendental lattice of a Kummer surface associated with two non-isogeneous elliptic curves which is is isomorphic to $H(2) \oplus H(2)$ where $H(2)$ is the standard rank-two hyperbolic lattice with its quadratic form rescaled by two.
\subsection{A family of K3 covers}
We also consider a second family of K3 surfaces $\mathcal{Y}_{\lambda_1, \lambda_2}$ obtained as the minimal resolution of the total space of the elliptic fibration with section given by
\begin{equation}\label{eqn:J6_dual}
\tilde{y}^2 =  f_{\lambda_1, \lambda_2}(u) \, x \big(1-x\big) \big(x - u\big) \,,
\end{equation}
where  $x, \tilde{y}$ are the affine coordinates of the elliptic fiber, $u$ is the affine coordinate of the base curve, and a section given by the point at infinity in each fiber.  The degree-two polynomial $f_{\lambda_1, \lambda_2}$ is given by
\begin{equation}
\label{eqn:f}
\begin{split}
f_{\lambda_1, \lambda_2}(u) & = \big(1-\lambda_1\big) \big(1-\lambda_2\big)  u^2 + 2 \big(\lambda_1 +\lambda_2\big) u + \frac{(\lambda_1-\lambda_2)^2}{(1-\lambda_1)(1-\lambda_2)} \,,
\end{split}
\end{equation}
such that $\mathcal{Y}_{\lambda_1, \lambda_2}$ depends only on the expressions $\lambda_1\lambda_2$, $\lambda_1+\lambda_2$. Hence, we have $\mathcal{Y}_{\lambda_1, \lambda_2} \cong \mathcal{Y}_{\lambda_2, \lambda_1}$, and $\mathcal{Y}_{\lambda_1, \lambda_2}$ is well defined at every point $\{\lambda_1, \lambda_2\} \in \mathcal{M}$ in Equation~(\ref{eqn:ModuliSpace}). The unique holomorphic two-form (up to scaling) is given by $\omega = du \wedge dx/\tilde{y}$.  
We have the following:
\begin{lemma}
\label{lem:J6_dual}
Equation~(\ref{eqn:J6_dual}) determines an elliptic fibration with section on the family of K3 surfaces $\mathcal{Y}_{\lambda_1, \lambda_2}$. Generically, the Weierstrass model has two singular fibers of Kodaira-type $I_0^*$ for $f_{\lambda_1, \lambda_2}(u)=0$, two singular fibers of type $I_2$ for $u =0,1$, a singular fiber of type $I_2^*$ for $u=\infty$, and a Mordell-Weil group given by $(\mathbb{Z}/2\mathbb{Z})^2$.
\end{lemma}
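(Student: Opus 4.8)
The plan is to put Equation~(\ref{eqn:J6_dual}) into a global Weierstrass normal form, read off the Kodaira types of the singular fibers from the orders of vanishing of $c_4$, $c_6$ and the discriminant $\Delta$ using Kodaira's table~\cite{MR0165541}, and then determine the Mordell--Weil group from explicit torsion sections together with the Shioda--Tate formula. Throughout write $f=f_{\lambda_1,\lambda_2}(u)$ for the polynomial in Equation~(\ref{eqn:f}).

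First I would clear the leading coefficient of the fiber cubic: the substitution $X=-f\,x$, $Y=f\,\tilde y$ turns Equation~(\ref{eqn:J6_dual}) into the Weierstrass form
\begin{equation*}
 Y^2 = X\big(X+f\big)\big(X+u\,f\big)\,,
\end{equation*}
with $a_1=a_3=a_6=0$, $a_2=(1+u)f$, $a_4=u f^2$. Since the three roots $0,-f,-uf$ of the fiber cubic are pairwise distinct away from $u=0,1$ and the zeros of $f$, the points $X=0$, $X=-f$, $X=-uf$ are three disjoint $2$-torsion sections, so $(\mathbb{Z}/2\mathbb{Z})^2\subseteq\operatorname{MW}$. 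A direct computation gives $c_4=16\,f^2(u^2-u+1)$, $c_6=-32\,f^3(u+1)(2u-1)(u-2)$, and, from the roots of the cubic, $\Delta=16\,f^6\,u^2(u-1)^2$; in particular $j=256(u^2-u+1)^3/(u^2(u-1)^2)$ is non-constant, so the fibration is non-isotrivial.

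Next I would run Tate's algorithm at each zero of $\Delta$. At the two generic (simple, distinct from $0,1$) zeros of $f$ one reads $v(c_4)=2$, $v(c_6)=3$, $v(\Delta)=6$, hence fibers of type $I_0^*$; at $u=0$ and $u=1$ one has $v(c_4)=0$, $v(\Delta)=2$, hence multiplicative fibers of type $I_2$; and at $u=\infty$ one passes to the chart $u=1/s$ and absorbs the appropriate power of $s$ into $X$ and $Y$ to obtain the minimal model there, where $v(c_4)=2$, $v(c_6)=3$, $v(\Delta)=8$ (equivalently, $\deg_u\Delta=16$ forces $\operatorname{ord}_\infty\Delta=24-16=8$), giving a fiber of type $I_2^*$. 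As a consistency check the Euler numbers sum to $2\cdot6+2\cdot2+8=24$, confirming that $\mathcal{Y}_{\lambda_1,\lambda_2}$ is a K3 surface and that there are no further singular fibers. For the Mordell--Weil group: the torsion subgroup injects into the product of the component groups of the singular fibers, which is $2$-elementary, so it embeds into the full $2$-torsion $E_\eta[2]\cong(\mathbb{Z}/2\mathbb{Z})^2$ of a generic fiber, and together with the three sections found above this gives $\operatorname{MW}_{\mathrm{tors}}=(\mathbb{Z}/2\mathbb{Z})^2$. The Shioda--Tate formula then reads $\rho(\mathcal{Y}_{\lambda_1,\lambda_2})=2+(2\cdot4+2\cdot1+6)+\operatorname{rank}\operatorname{MW}=18+\operatorname{rank}\operatorname{MW}$; since the generic $\mathcal{Y}_{\lambda_1,\lambda_2}$ has transcendental lattice of rank $4$ — which follows, as for $\mathcal{X}_{\lambda_1,\lambda_2}$, from the rank-four Picard--Fuchs system of its holomorphic two-form and from $\mathcal{Y}_{\lambda_1,\lambda_2}$ being the target of the rational double cover of the Kummer surface — its Picard rank is $18$ and $\operatorname{rank}\operatorname{MW}=0$.

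I expect the main obstacle to be the fiber over $u=\infty$: one has to choose the correct twist of the Weierstrass equation in the chart at infinity so that the model obtained there is minimal, and then distinguish type $I_2^*$ from the neighbouring Kodaira types ($I_0^*$, $IV^*$, $I_8$, $II^*$, \dots) using the precise vanishing orders of $c_4$ and $c_6$, not of $\Delta$ alone. A secondary subtlety is the vanishing of the Mordell--Weil rank rather than merely the inclusion $(\mathbb{Z}/2\mathbb{Z})^2\subseteq\operatorname{MW}$; this step genuinely requires knowing the Picard rank of the generic member, information that comes from the relation of $\mathcal{Y}_{\lambda_1,\lambda_2}$ to the Kummer surface (or from the rank-four Picard--Fuchs system) rather than from the elliptic fibration by itself.
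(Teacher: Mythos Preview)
Your proposal is correct and is essentially a detailed execution of the paper's one-line proof, which simply reads ``The result follows by determining the singular fibers and the Mordell--Weil group of sections of the Weierstrass model in Equation~(\ref{eqn:J6_dual}).'' Your computations of $c_4$, $c_6$, $\Delta$ and the resulting Kodaira types are exactly what is needed to unpack that sentence, and they are accurate.

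One point worth flagging concerns logical order rather than correctness. To conclude $\operatorname{rank}\operatorname{MW}=0$ you invoke $\rho(\mathcal{Y}_{\lambda_1,\lambda_2})=18$, which you justify via the rational double cover $\mathcal{X}_{\lambda_1,\lambda_2}\dasharrow\mathcal{Y}_{\lambda_1,\lambda_2}$ and the rank-four Picard--Fuchs system. In the paper both of these facts (Proposition~\ref{prop:double_cover} and Lemma~\ref{lem:PF_J6_GKZ}) are stated \emph{after} Lemma~\ref{lem:J6_dual}, so strictly speaking your argument appeals to results not yet available at that point in the exposition. The paper itself is silent on how exactly the Mordell--Weil group is pinned down here, so this is not a divergence from the paper's approach so much as an honest acknowledgment of what the computation actually requires. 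If you want a self-contained argument at this spot, you could instead appeal to the genericity of the two-parameter family: the period map for the family $\mathcal{Y}_{\lambda_1,\lambda_2}$ has two-dimensional image, which already forces $\rho\le 18$ generically without reference to the Kummer surface.
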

\begin{proof}
The result follows by determining the singular fibers and the Mordell-Weil group of sections of the Weierstrass model in Equation~(\ref{eqn:J6_dual}).
\end{proof}
\par If we set $\lambda_i=k_i^2$ for $i=1,2$, we can re-write Equation~(\ref{eqn:J6_dual}) as
\begin{equation}\label{eqn:J6_dual_GKZ}
y^2 =   \big(k_1 +k _2\big)^2 v \big(1-v\big)  x \big(x-1\big) \big(1-z_2 x-z_1 v\big) \,,
\end{equation}
where  $x, y$ are the affine coordinates of the elliptic fiber, and $v$ is the affine coordinate of the base curve, and $\omega$ is the holomorphic two-form given by
\begin{equation}
 v = \frac{1-z_2 u}{z_1} \,, \quad y = -\frac{z_2 \tilde{y}}{z_1} \,, \quad \omega = \frac{dv \wedge dx}{y} \,,
\end{equation}
and moduli given by
\begin{equation}
\label{eqn:moduli}
  (z_1, z_2) = \left(\frac { 4\, k_1k_2}{ \left( k_1+k_2 \right) ^{2}},  -{\frac { \left( k^2_1-1 \right)\left( k^2_2-1 \right)  }{ \left( k_1+k_2 \right) ^{2}}}\right) \;.
\end{equation}
\par We define a closely related family of K3 surfaces $\mathcal{Z}_{z_1, z_2}$ obtained as the minimal resolution of the total space of the elliptic fibration with section given by
\begin{equation}\label{eqn:J6_dual_GKZ_twist}
y^2 =  v \big(1-v\big)  x \big(1-x\big) \big(1-z_2 x-z_1 v\big) \,,
\end{equation}
a holomorphic two-form $\omega = dv \wedge dx/y$ and $(z_1,z_2)$ given by Equation~(\ref{eqn:moduli}).  Recall that the covering space $\widetilde{\mathcal{M}}$ of the moduli space was defined in Equation~(\ref{eqn:ModuliSpace_cover}). We have the following:
\begin{lemma}
\label{lem:twist}
Over $\widetilde{\mathcal{M}}$ the family of K3 surfaces $\mathcal{Z}_{z_1, z_2}$ is isomorphic to $\mathcal{Y}_{\lambda_1, \lambda_2}$ with $(z_1,z_2)$ given by Equation~(\ref{eqn:moduli}). In particular, Lemma~\ref{lem:J6_dual} remains true for the family $\mathcal{Z}_{z_1, z_2}$, that is, Equation~(\ref{eqn:J6_dual_GKZ_twist}) defines a Weierstrass model with two singular fibers of Kodaira-type $I_0^*$, two singular fibers of type $I_2$, a singular fiber of type $I_2^*$, and a Mordell-Weil group given by $(\mathbb{Z}/2\mathbb{Z})^2$.
\end{lemma}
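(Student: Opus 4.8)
The plan is to produce an explicit isomorphism of Jacobian elliptic surfaces over a common base between the two Weierstrass models in Equations~(\ref{eqn:J6_dual_GKZ}) and~(\ref{eqn:J6_dual_GKZ_twist}), and then transport the conclusion of Lemma~\ref{lem:J6_dual}, which is already proved for $\mathcal{Y}_{\lambda_1,\lambda_2}$, across this isomorphism.

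First I would record that on $\widetilde{\mathcal{M}}$ nothing in the relevant formulas degenerates. Since the two elliptic curves are non-isogenous we have $\lambda_1\neq\lambda_2$ and $\lambda_i\in\mathbb{P}^1\setminus\{0,1,\infty\}$, whence $k_i\neq 0,\pm 1$ and $k_1\neq\pm k_2$; in particular $k_1+k_2\neq 0$, $k_1k_2\neq 0$, and $(k_1^2-1)(k_2^2-1)\neq 0$, so both coordinates $z_1,z_2$ in Equation~(\ref{eqn:moduli}) are finite and nonzero. Consequently the substitution $v=(1-z_2u)/z_1$, $y=-z_2\tilde y/z_1$ used to pass from Equation~(\ref{eqn:J6_dual}) to Equation~(\ref{eqn:J6_dual_GKZ}) is an affine-linear, invertible reparametrization of the base coordinate together with a rescaling of the fiber coordinate, hence a genuine isomorphism of Jacobian elliptic surfaces carrying the section at infinity to the section at infinity. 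Thus over $\widetilde{\mathcal{M}}$ the surface $\mathcal{Y}_{\lambda_1,\lambda_2}$ is the minimal resolution of the model~(\ref{eqn:J6_dual_GKZ}) with $\lambda_i=k_i^2$. Checking that this substitution indeed produces~(\ref{eqn:J6_dual_GKZ}) with the stated $(z_1,z_2)$ amounts to expanding $f_{\lambda_1,\lambda_2}$ from Equation~(\ref{eqn:f}) at $u=(1-z_1v)/z_2$ and collecting powers of $v$; this is the only genuinely computational step and is routine.

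Next I would compare the two Weierstrass polynomials directly. Using $x(1-x)=-x(x-1)$, the right-hand side of Equation~(\ref{eqn:J6_dual_GKZ}) equals $-(k_1+k_2)^2$ times the right-hand side of Equation~(\ref{eqn:J6_dual_GKZ_twist}). Over $\mathbb{C}$ the constant $-(k_1+k_2)^2$ is a square, so the fiberwise substitution $y\mapsto \sqrt{-1}\,(k_1+k_2)\,y$ turns~(\ref{eqn:J6_dual_GKZ}) into~(\ref{eqn:J6_dual_GKZ_twist}); this substitution leaves the base coordinate $v$ and the fiber coordinate $x$ untouched and fixes the point at infinity in every fiber, hence it is an isomorphism of Jacobian elliptic surfaces over $\mathbb{P}^1_v$. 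Passing to minimal resolutions yields $\mathcal{Y}_{\lambda_1,\lambda_2}\cong\mathcal{Z}_{z_1,z_2}$ with $(z_1,z_2)$ as in Equation~(\ref{eqn:moduli}), which is the first assertion.

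Finally, an isomorphism of Jacobian elliptic surfaces over a fixed base is in particular fiber-preserving and section-preserving, so it matches singular fibers with singular fibers of the same Kodaira type and induces an isomorphism of Mordell--Weil groups. Therefore every conclusion of Lemma~\ref{lem:J6_dual} transfers verbatim to $\mathcal{Z}_{z_1,z_2}$: two singular fibers of type $I_0^*$, two of type $I_2$, one of type $I_2^*$, and Mordell--Weil group $(\mathbb{Z}/2\mathbb{Z})^2$. The only point requiring care, and the one I expect to be the main (if minor) obstacle, is the constant factor $-(k_1+k_2)^2$: over a field not containing $\sqrt{-1}$ one obtains a genuine quadratic twist rather than a literal isomorphism, but since this twist is by a constant it is unramified over the base $\mathbb{P}^1_v$, so it changes neither the Kodaira types of the singular fibers nor the Mordell--Weil group; hence the second assertion is insensitive to this subtlety and holds for $\mathcal{Z}_{z_1,z_2}$ regardless.
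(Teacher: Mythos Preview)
Your proposal is correct and takes essentially the same approach as the paper: the paper's entire proof is the single substitution $y\mapsto i(k_1+k_2)y$ in Equation~(\ref{eqn:J6_dual_GKZ}) to obtain Equation~(\ref{eqn:J6_dual_GKZ_twist}), which is exactly the fiberwise rescaling you identify in your second paragraph. Your additional remarks on non-degeneracy over $\widetilde{\mathcal{M}}$, the transfer of Kodaira types and the Mordell--Weil group, and the constant-twist subtlety are all sound elaborations that the paper leaves implicit.
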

\begin{proof}
Mapping $y \mapsto i(k_1+k_2)y$ in Equation~(\ref{eqn:J6_dual_GKZ}) yields Equation~(\ref{eqn:J6_dual_GKZ_twist}).
\end{proof}
\par We also have the following:
\begin{lemma}
\label{lem:PF_J6_GKZ}
Every period integral $f(k_1,k_2)$ of the holomorphic two-form $\omega$ for the family of K3 surfaces $\mathcal{Z}_{z_1, z_2}$ satisfies
\begin{equation}
\label{eqn:PF_J6_GKZ}
 L^{(1)}_{z_1, z_2}f =L^{(2)}_{z_1, z_2}f  = 0\,,
\end{equation}
where $L^{(1)}_{z_1,z_2}, L^{(2)}_{z_1,z_2}$ are the partial differential operators in Equation~(\ref{app2system}) for $2\alpha=2\beta_1=2\beta_2=\gamma_1=\gamma_2=1$. In particular, in the polydisc $\{ (z_1, z_2) \in \mathbb{C}^2 \mid \, |z_1| , |z_2| <1\}$ a holomorphic solution is given by
\begin{equation}
\label{eqn:PF_J6_GKZ_fct}
 f(k_1,k_2) =  \app2{ \frac{1}{2};\;\frac{1}{2}, \; \frac{1}{2}}{1, \; 1}{z_1, \, z_2} \;.
\end{equation}
\end{lemma}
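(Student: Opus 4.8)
The plan is to compute the periods of $\omega$ through the iterated elliptic fibration of $\mathcal{Z}_{z_1,z_2}$, to identify the holomorphic period with Appell's $F_2$ via the integral representation~(\ref{IntegralFormula}), and then to promote this to a statement about \emph{every} period by a rank count together with the Barnes--Bailey factorization of Theorem~\ref{thm1}. The easy half is the holomorphic solution~(\ref{eqn:PF_J6_GKZ_fct}): for $(z_1,z_2)$ in the polydisc with both coordinates real and positive the radical $v(1-v)\,x(1-x)\,(1-z_2x-z_1v)$ is nonnegative on the square $[0,1]^2$, so the real square lifts, on the minimal resolution of~(\ref{eqn:J6_dual_GKZ_twist}), to a transcendental $2$-cycle $\gamma_0$ (a Lefschetz thimble swept out by the fiberwise vanishing cycle over the segment $0\le v\le 1$) with
\[
 \int_{\gamma_0}\omega=\int_0^1\!\!\int_0^1\frac{dv\,dx}{\sqrt{v(1-v)\,x(1-x)\,(1-z_2x-z_1v)}}\,.
\]
Specializing~(\ref{IntegralFormula}) to $2\alpha=2\beta_1=2\beta_2=\gamma_1=\gamma_2=1$, where the prefactor is $\Gamma(\tfrac12)^{-4}=\pi^{-2}$ and the kernel is $\big(v(1-v)x(1-x)(1-z_1x-z_2v)\big)^{-1/2}$, and using that $\app2{\frac{1}{2};\,\frac{1}{2},\,\frac{1}{2}}{1,\,1}{z_1,z_2}$ is symmetric in $z_1,z_2$ (the series~(\ref{appf1}) has symmetric coefficients for these parameters), one gets $\int_{\gamma_0}\omega=\pi^2\,\app2{\frac{1}{2};\,\frac{1}{2},\,\frac{1}{2}}{1,\,1}{z_1,z_2}$, and this function solves the system~(\ref{app2system}) by construction of $F_2$. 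Equivalently, one can see this fiberwise: for fixed $v$ the fiber of~(\ref{eqn:J6_dual_GKZ_twist}) is the quadratic twist by $v(1-v)(1-z_1v)$ of the Legendre curve with modular parameter $z_2/(1-z_1v)$, so the fiberwise periods satisfy $L_z$ with $2\alpha=2\beta=\gamma=1$ pulled back along this parameter.

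For the full statement I would first pin down the rank of the period local system. By Lemma~\ref{lem:twist}, $\mathcal{Z}_{z_1,z_2}$ carries the fibration of Lemma~\ref{lem:J6_dual}, with reducible fibers $2\,I_0^*+2\,I_2+I_2^*$ and torsion Mordell--Weil group $(\mathbb{Z}/2\mathbb{Z})^2$; the corresponding root lattices have total rank $4+4+1+1+6=16$, so Shioda--Tate gives Picard rank $16+2=18$ and hence transcendental rank $4$. Therefore the Picard--Fuchs $D$-module generated by $[\omega]$ is holonomic of rank at most $4$, while the solution sheaf of $L^{(1)}_{z_1,z_2}f=L^{(2)}_{z_1,z_2}f=0$ is likewise $4$-dimensional. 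By Theorem~\ref{thm1} and the Remark following it, the latter solution space is spanned by the four products $(k_1+k_2)\,A(k_1^2)\,B(k_2^2)$, where $A$ and $B$ each run over a basis of solutions of $\hpg21{\frac{1}{2},\,\frac{1}{2}}{1}{\cdot}$, and these four functions are linearly independent. On the geometric side, $\mathcal{Z}_{z_1,z_2}\cong\mathcal{Y}_{\lambda_1,\lambda_2}$ via $y\mapsto i(k_1+k_2)y$ (Lemma~\ref{lem:twist}), which scales $\omega$ by a constant multiple of $(k_1+k_2)^{-1}$, and $\mathcal{Y}_{\lambda_1,\lambda_2}$ is covered $2{:}1$ rationally by the Kummer surface $\mathcal{X}_{\lambda_1,\lambda_2}$, whose transcendental Hodge structure is, by the double Kummer pencil~(\ref{eqn:J4}), that of $H^1(E_1)\otimes H^1(E_2)$ with $\lambda_i=k_i^2$. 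Hence every period of $\mathcal{Y}_{\lambda_1,\lambda_2}$ is a combination of products of a period of $E_1$ and a period of $E_2$ (each solving $\hpg21{\frac{1}{2},\,\frac{1}{2}}{1}{\cdot}$ by Lemma~\ref{lem:PF_J4}), and every period of $\mathcal{Z}_{z_1,z_2}$ is such a combination times the gauge factor $(k_1+k_2)$ — i.e., it lies in the span of the four functions above, which is exactly the solution space of the Appell system. Since $\app2{\frac{1}{2};\,\frac{1}{2},\,\frac{1}{2}}{1,\,1}{z_1,z_2}$ is one of these periods (namely $\pi^{-2}\int_{\gamma_0}\omega$), this both recovers~(\ref{eqn:PF_J6_GKZ_fct}) and establishes~(\ref{eqn:PF_J6_GKZ}).

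The main obstacle is the last step — matching the period local system of $\mathcal{Z}_{z_1,z_2}$ with the solution sheaf of the Appell system, which requires the rank count above, the linear independence of the four product solutions, and the Hodge-theoretic comparison through the rational double cover $\mathcal{X}_{\lambda_1,\lambda_2}\to\mathcal{Y}_{\lambda_1,\lambda_2}$. If one prefers to avoid that bookkeeping, the robust alternative is a Griffiths--Dwork reduction directly on~(\ref{eqn:J6_dual_GKZ_twist}): verify by explicit reduction modulo the Jacobian ideal that $L^{(i)}_{z_1,z_2}\,\omega=d\eta_i$ for relative $1$-forms $\eta_i$ regular away from the singular fibers of Lemma~\ref{lem:J6_dual}, so that $L^{(i)}_{z_1,z_2}\int_\gamma\omega=\int_\gamma d\eta_i=0$ for every $2$-cycle $\gamma$. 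This is a finite but lengthy computation, and handling the contributions of the $I_0^*$, $I_2$, and $I_2^*$ fibers correctly is where the real work lies; it is essentially the computation carried out for the closely related families in \cite{MR3767270} and \cite{MR3798883}.
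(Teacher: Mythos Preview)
Your argument is correct and shares the paper's key observation: the holomorphic two-form $\omega$ of $\mathcal{Z}_{z_1,z_2}$ is exactly the integrand of the Euler-type representation~(\ref{IntegralFormula}) for $F_2$ with $2\alpha=2\beta_1=2\beta_2=\gamma_1=\gamma_2=1$, so the period over the real square is $\pi^2 F_2(z_1,z_2)$. The paper's proof stops essentially there: it writes out $\omega = dv\wedge dx/\sqrt{v(1-v)x(1-x)(1-z_2x-z_1v)}$, matches it to~(\ref{IntegralFormula}), and then simply records that the chosen parameters satisfy the quadric property of Proposition~\ref{SY}. No rank count, no comparison with $\mathcal{X}$ or $\mathcal{Y}$, no Griffiths--Dwork reduction.

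What you add --- the Shioda--Tate computation of transcendental rank $4$, the identification of the Appell solution space with products $(k_1+k_2)A(k_1^2)B(k_2^2)$ via Theorem~\ref{thm1}, and the passage through $\mathcal{Z}\cong\mathcal{Y}\leftarrow\mathcal{X}$ to realize every period as such a product --- is a genuine strengthening: it actually justifies the ``every period integral'' clause, which the paper's proof leaves implicit. The price is that your route depends on Lemma~\ref{lem:twist}, Proposition~\ref{prop:double_cover}, and the Barnes--Bailey identity, whereas the paper treats the lemma as immediate from the integral representation alone. Your alternative Griffiths--Dwork suggestion is also sound, and indeed is what the references \cite{MR3767270,MR3798883} carry out; the paper is content to cite the integral match and the quadric property without reproducing that computation.
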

\begin{proof}
The right hand side of Equation~(\ref{eqn:J6_dual_GKZ_twist}) matches the denominator of the integrand in Equation~(\ref{IntegralFormula}) with $2\alpha=2\beta_1=2\beta_2=\gamma_1=\gamma_2=1$. In particular, we have
\begin{equation}
  \omega= \frac{dv \wedge dx}{y} = \frac{dv \wedge dx}{\sqrt{v \, (1-v) \, x \,  (1-x) \, (1-z_2 x-z_1 v)}} \,.
\end{equation}
The parameters $2\alpha=2\beta_1=2\beta_2=\gamma_1=\gamma_2=1$ satisfy the quadric property in Proposition~\ref{QuadricProperty}.
\end{proof}
We make the following:
\begin{remark}
\label{rem:all_families}
The relation between the parameters $(z_1,z_2)$ and $(k_1, k_2)$ in Equation~(\ref{eqn:moduli}) is precisely the relation that was given in Theorem~\ref{thm1}. Therefore, Theorem~\ref{thm1} also governs the relation between the Picard-Fuchs system for the family $\mathcal{X}_{\lambda_1, \lambda_2}$ in Lemma~\ref{lem:PF_J4} and $\mathcal{Y}_{\lambda_1, \lambda_2}$. In turn, it determines the Picard-Fuchs system for the family $\mathcal{Z}_{z_1, z_2}$ in Lemma~\ref{lem:PF_J6_GKZ}. In fact, the difference between the holomorphic solutions in Equation~(\ref{eqn:PF_J4_fct}) and Equation~(\ref{eqn:PF_J6_GKZ_fct}) when compared with the holomorphic solutions in Equation~(\ref{eqn:solutions}) is the twist factor relating the families $\mathcal{Y}_{\lambda_1, \lambda_2}$ and $\mathcal{Z}_{z_1, z_2}$. The explicit expressions for the Picard-Fuchs system over $\widetilde{\mathcal{M}}$ was given in \cite{MR3767270}*{Sec.~2.1}.
\end{remark}
\subsection{Rational double coverings}
The two fibrations in Equation~(\ref{eqn:J6_dual}) and Equation~(\ref{eqn:J6}) are connected by a rational base transformation and a twist. To see this, we start with the rational Jacobian elliptic surface in Equation~(\ref{eqn:J6_dual}) over the rational base curve $C_\mathcal{Y} = \mathbb{P}^1$.  Consider the family of ramified covers  $\psi_{\lambda_1,\lambda_2}: \mathbb{P}^1 \to C_\mathcal{Y}=\mathbb{P}^1$ of degree $d=2$, mapping surjectively to $C_\mathcal{Y}$ and given by
\begin{equation}
\label{eqn:psi}
 \psi_{\lambda_1,\lambda_2}: \quad t  \mapsto u= \frac{(t-1)(\lambda_2 t-\lambda_1)}{(1-\lambda_1)(1-\lambda_2)t} \,,
 \end{equation}
and by mapping $t=0, \infty$ to $u=\infty$. Therefore, the covering $\psi$ has the following three properties: (1) the  points $u=0, 1, \infty$ have $2$ pre-images each with branch numbers zero; (2) there are two additional ramification points not coincident with $\lbrace 0, 1, \infty\rbrace$ with branch number $1$, namely the solutions of $\lambda_2 t^2-\lambda_1=0$; (3) the following relation for the function $f_{\lambda_1, \lambda_2}$ in Equation~(\ref{eqn:f}) holds
\[
 f_{\lambda_1, \lambda_2}\Big( \psi_{\lambda_1,\lambda_2}(t) \Big) = (1-\lambda_1) (1-\lambda_2) \, \left( \frac{(\lambda_2 t^2-\lambda_1)}{(1-\lambda_1) (1-\lambda_2) t} \right)^2 \,.
\]
Thus, the Riemann-Hurwitz formula $g-1=B/2+d \cdot (g'-1)$ is satisfied with $d=2$, $g=g'=0$, $B=1+1$.  Thus, we obtain a degree-two rational map 
\[
 \Psi_{\lambda_1,\lambda_2}: \; \mathcal{X}_{\lambda_1,\lambda_2} \dasharrow \mathcal{Y}_{\lambda_1,\lambda_2}\,,
\] 
given by
\begin{equation}\label{eqn:cover}
\Psi_{\lambda_1,\lambda_2}: \Big(t, \tilde{X}, \tilde{Y}\Big) \mapsto  \Big(u, x, \tilde{y}\Big) =\left( \psi_{\lambda_1,\lambda_2}(t), \tilde{X}, \frac{(\lambda_2 t^2-\lambda_1) \tilde{Y}}{(1-\lambda_1) (1-\lambda_2) t^2}\right) \,.
\end{equation}
We have proved the following:
\begin{proposition}
\label{prop:double_cover}
Equation~(\ref{eqn:cover}) defines a rational cover $\Psi_{\lambda_1,\lambda_2}: \mathcal{X}_{\lambda_1,\lambda_2} \dasharrow \mathcal{Y}_{\lambda_1, \lambda_2}$ of degree two between the Kummer surface $\mathcal{X}_{\lambda_1,\lambda_2}=\operatorname{Kum}(E_1\times E_2)$ associated with the elliptic curves $E_1, E_2$ in Equation~(\ref{eqn:J6}) and the elliptically fibered K3 surface $\mathcal{Y}_{\lambda_1, \lambda_2}$ in Equation~(\ref{eqn:J6_dual}) such that the holomorphic two-forms satisfy $\Psi_{\lambda_1,\lambda_2}^* \omega = \Omega$.
\end{proposition}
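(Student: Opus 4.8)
The plan is to verify directly that the map $\Psi_{\lambda_1,\lambda_2}$ in Equation~(\ref{eqn:cover}) is a well-defined rational map of degree two intertwining the two elliptic fibrations, and that it pulls back $\omega$ to $\Omega$. First I would observe that $\Psi_{\lambda_1,\lambda_2}$ is built as a fiber product: on the base curves it is the degree-two cover $\psi_{\lambda_1,\lambda_2}$ of Equation~(\ref{eqn:psi}), and on the fibers it is compatible with the Weierstrass data. Concretely, substituting $u = \psi_{\lambda_1,\lambda_2}(t)$ into the defining Equation~(\ref{eqn:J6_dual}) for $\mathcal{Y}_{\lambda_1,\lambda_2}$ and using property~(3) of the cover, namely
\[
 f_{\lambda_1, \lambda_2}\bigl( \psi_{\lambda_1,\lambda_2}(t) \bigr) = (1-\lambda_1)(1-\lambda_2)\left( \frac{\lambda_2 t^2-\lambda_1}{(1-\lambda_1)(1-\lambda_2)\,t} \right)^2 \,,
\]
turns the right-hand side of Equation~(\ref{eqn:J6_dual}) into
\[
 (1-\lambda_1)(1-\lambda_2)\left( \frac{\lambda_2 t^2-\lambda_1}{(1-\lambda_1)(1-\lambda_2)\,t} \right)^2 x\,(1-x)\,(x-u)\,.
\]
Setting $x = \tilde{X}$ and $\tilde{y} = (\lambda_2 t^2-\lambda_1)\tilde{Y}/\bigl((1-\lambda_1)(1-\lambda_2)\,t^2\bigr)$, the equation $\tilde{y}^2 = f_{\lambda_1,\lambda_2}(u)\,x(1-x)(x-u)$ becomes, after clearing the square factor,
\[
 \tilde{Y}^2 = (1-\lambda_1)(1-\lambda_2)\,t^2\, \tilde{X}\,(1-\tilde{X})\left(\tilde{X} - \frac{(t-1)(\lambda_2 t - \lambda_1)}{(1-\lambda_1)(1-\lambda_2)\,t}\right)\,,
\]
which is precisely Equation~(\ref{eqn:J6}), the $\mathcal{J}_6$-model of $\mathcal{X}_{\lambda_1,\lambda_2}$. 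This shows $\Psi_{\lambda_1,\lambda_2}$ maps $\mathcal{X}_{\lambda_1,\lambda_2}$ into $\mathcal{Y}_{\lambda_1,\lambda_2}$.

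Next I would check the degree. Since $\psi_{\lambda_1,\lambda_2}$ has degree two on the base and $\Psi_{\lambda_1,\lambda_2}$ acts as the identity $\tilde{X} = x$ on the fiber coordinate (the $y$-coordinates being determined up to the unavoidable sign by the Weierstrass equation), a generic fiber of $\mathcal{Y}_{\lambda_1,\lambda_2}$ pulls back isomorphically to the two fibers of $\mathcal{X}_{\lambda_1,\lambda_2}$ lying over its two preimages in the base; hence $\deg \Psi_{\lambda_1,\lambda_2} = 2$. Finally, for the holomorphic two-forms I would compute $\Psi_{\lambda_1,\lambda_2}^*\omega$ directly. Writing $\omega = du \wedge dx/\tilde{y}$ and pulling back, one has $du = \psi_{\lambda_1,\lambda_2}'(t)\,dt$ and $dx = d\tilde{X}$, so
\[
 \Psi_{\lambda_1,\lambda_2}^*\omega = \frac{\psi_{\lambda_1,\lambda_2}'(t)\, dt \wedge d\tilde{X}}{\tilde{y}}
 = \frac{\psi_{\lambda_1,\lambda_2}'(t)\,(1-\lambda_1)(1-\lambda_2)\,t^2}{\lambda_2 t^2 - \lambda_1}\cdot \frac{dt \wedge d\tilde{X}}{\tilde{Y}}\,,
\]
and a short computation of $\psi_{\lambda_1,\lambda_2}'(t)$ from Equation~(\ref{eqn:psi}) — which gives $\psi_{\lambda_1,\lambda_2}'(t) = (\lambda_2 t^2 - \lambda_1)/\bigl((1-\lambda_1)(1-\lambda_2)\,t^2\bigr)$ — shows the prefactor is exactly $1$, so that $\Psi_{\lambda_1,\lambda_2}^*\omega = dt \wedge d\tilde{X}/\tilde{Y} = \Omega$ by the identification of $\Omega$ in the text following Equation~(\ref{eqn:J6}).

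The main obstacle I anticipate is not any single computation but rather the bookkeeping of birationality and base points: one must confirm that $\Psi_{\lambda_1,\lambda_2}$ is genuinely defined on a dense open set of $\mathcal{X}_{\lambda_1,\lambda_2}$ and extends to a morphism of the minimal resolutions, and that the square factor $(\lambda_2 t^2 - \lambda_1)^2$ removed above does not secretly introduce extra components or change the degree. Because both $\mathcal{X}_{\lambda_1,\lambda_2}$ and $\mathcal{Y}_{\lambda_1,\lambda_2}$ are here handled through their Weierstrass models, and all the relevant loci ($t = 0, \infty$, the ramification points $\lambda_2 t^2 = \lambda_1$, and the singular fibers recorded in Lemma~\ref{lem:J6} and Lemma~\ref{lem:J6_dual}) are explicitly known, this is a matter of careful verification rather than a conceptual difficulty; the Riemann–Hurwitz count already carried out in the text — $g - 1 = B/2 + d(g'-1)$ with $d = 2$, $g = g' = 0$, $B = 2$ — certifies that the base change is of the expected type and that $\psi_{\lambda_1,\lambda_2}$ has exactly the claimed ramification, which is what makes the square factor appear cleanly.
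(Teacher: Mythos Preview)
Your proposal is correct and follows essentially the same route as the paper's proof: substitute Equation~(\ref{eqn:cover}) into Equation~(\ref{eqn:J6_dual}) to recover Equation~(\ref{eqn:J6}), and then compute $du = \frac{(\lambda_2 t^2-\lambda_1)\,dt}{(1-\lambda_1)(1-\lambda_2)\,t^2}$ to verify $\Psi_{\lambda_1,\lambda_2}^*\omega = \Omega$. The paper's proof is terser and omits the explicit degree argument and the bookkeeping discussion you include, but the underlying computation is identical.
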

\begin{proof}
The result follows by an explicit computation using Weierstrass models in Equation~(\ref{eqn:J6}) and Equation~(\ref{eqn:J6_dual}), respectively. Substituting Equation~(\ref{eqn:cover}) into Equation~(\ref{eqn:J6_dual}), we obtain Equation~(\ref{eqn:J6}). Using Equation~(\ref{eqn:cover}), one checks that
\[
 du = \frac{(t^2\lambda_2-\lambda_1) \, dt}{t^2(1-\lambda_1)(1-\lambda_2)} \,,
\]
which implies $\Psi_{\lambda_1,\lambda_2}^* \omega = \Omega$ for $\omega = du \wedge dx/\tilde{y}$ and $\Omega = dt \wedge dX/Y$.
\end{proof}
We make the following:
\begin {remark}
The map $\Psi_{\lambda_1,\lambda_2}$ in Proposition~\ref{prop:double_cover} is a double cover branched on the even eight in $\operatorname{NS}(\mathcal{Y}_{\lambda_1, \lambda_2})$ that consists of the non-central components of the reducible fibers obtained from the two $I_0^*$ fibers in Equation~(\ref{eqn:J6_dual}). 
However, the map $\Psi_{\lambda_1,\lambda_2}$ does not induce a Hodge isometry on the transcendental lattices. For example, $\mathcal{Y}_{\lambda_1, \lambda_2}$ is not a Kummer surface as it admits a Jacobian elliptic fibration that does not appear in Oguiso's list \cite{MR2409557}.
\end{remark}
\par We also consider the rational elliptic surface $\mathcal{S}$ that is the minimal resolution of the total space of the elliptic fibration given by
\begin{equation}\label{eqn:J6_rational}
\tilde{y}^2 =  \big(1-\lambda_1\big)\big(1-\lambda_2\big)  x \big(1-x\big) \big(x - u\big) \,.
\end{equation}
Equation~(\ref{eqn:J6_rational}) is the (constant) quadratic twist of the Legendre family encountered before in Equation~(\ref{eqn:Legendre}). We have the following:
\begin{lemma}
\label{lem:rational_surface}
Equation~(\ref{eqn:J6_rational}) defines an extremal elliptic fibration with section on the rational elliptic surface $\mathcal{S}$. The Weierstrass model has two singular fibers of Kodaira-type $I_2$ for $u =0,1$, a singular fiber of type $I_2^*$ for $u=\infty$, and a Mordell-Weil group of sections given by $(\mathbb{Z}/2\mathbb{Z})^2$.
\end{lemma}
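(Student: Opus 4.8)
The plan is to reduce Equation~(\ref{eqn:J6_rational}) to a Weierstrass normal form, read the singular fibers and the Picard rank off the discriminant, and conclude with the Shioda--Tate formula. First I would note that over $\mathbb{C}$ the constant $(1-\lambda_1)(1-\lambda_2)$ is a square, so rescaling the fiber coordinate $\tilde{y}$ identifies $\mathcal{S}$ with the Legendre rational elliptic surface $\tilde{y}^2 = x(x-1)(x-u)$; this is the ``(constant) quadratic twist'' already pointed out in the text. Expanding the cubic gives the affine Weierstrass equation $\tilde{y}^2 = x^3 - (1+u)\,x^2 + u\,x$, and a direct computation yields $c_4 = 16\,(u^2-u+1)$, $c_6 = 32\,(u+1)(2u-1)(u-2)$, and a discriminant $\Delta = (c_4^3-c_6^2)/1728$ equal to a nonzero constant multiple of $u^2(u-1)^2$. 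Since $c_4$, $c_6$, and $\Delta$ have degrees at most $4$, $6$, and $12$ in the homogenized base coordinate, $\mathcal{S}$ is a rational (not a K3) elliptic surface, and the $j$-map $j = c_4^3/\Delta$ is non-constant.

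Next I would identify the Kodaira types from the local vanishing orders. At $u=0$ and $u=1$ one has $\operatorname{ord}(\Delta) = 2$ while $\operatorname{ord}(c_4)=0$, so the fibers there are of type $I_2$. At $u=\infty$ one passes to the chart $s=1/u$, clears denominators to obtain a minimal Weierstrass model, and finds $(\operatorname{ord}_s c_4,\operatorname{ord}_s c_6,\operatorname{ord}_s\Delta) = (2,3,8)$; by Kodaira's table (equivalently Tate's algorithm) this forces type $I_2^*$. As a cross-check the Euler numbers sum to $2+2+8 = 12$, exactly the Euler number of a rational elliptic surface; in fact, once we know $\mathcal{S}$ is rational, this bookkeeping alone pins the fiber over $\infty$ to $I_2^*$. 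The three sections cut out by $\tilde{y}=0$, $x\in\{0,1,u\}$, are pairwise disjoint and, together with the zero section, form a subgroup isomorphic to $(\mathbb{Z}/2\mathbb{Z})^2$ inside $\operatorname{MW}$.

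Finally, extremality and the exact Mordell--Weil group come from lattice theory. The reducible fibers $I_2$, $I_2$, $I_2^*$ contribute root lattices $A_1\oplus A_1\oplus D_6$ of total rank $8$, so the Shioda--Tate formula gives $10 = \rho(\mathcal{S}) = 2 + 8 + \operatorname{rank}\operatorname{MW}$, whence $\operatorname{rank}\operatorname{MW}=0$ and the fibration is extremal. Since $\operatorname{NS}(\mathcal{S})$ is unimodular and the trivial lattice has discriminant of absolute value $|\operatorname{disc}(A_1)|^2\cdot|\operatorname{disc}(D_6)| = 2\cdot 2\cdot 4 = 16$, the torsion Mordell--Weil group has order $\sqrt{16}=4$; together with the $(\mathbb{Z}/2\mathbb{Z})^2$ exhibited above this forces $\operatorname{MW}\cong(\mathbb{Z}/2\mathbb{Z})^2$. (Alternatively, $\mathcal{S}$ is one of the extremal rational elliptic surfaces in the Miranda--Persson classification, namely the one with trivial lattice $D_6\oplus A_1\oplus A_1$ and torsion $(\mathbb{Z}/2\mathbb{Z})^2$.) The only genuinely delicate step is the local analysis at $u=\infty$: one must reduce to a minimal Weierstrass model there before reading vanishing orders, since a careless computation could return $I_2$ instead of $I_2^*$ --- but such an error is immediately detected by the Euler-number count.
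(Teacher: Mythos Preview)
Your argument is correct and follows the same overall arc as the paper's proof: compute the discriminant, read off the Kodaira types at $u=0,1,\infty$, exhibit the visible $(\mathbb{Z}/2\mathbb{Z})^2$ of two-torsion sections, and then pin down the Mordell--Weil group. The paper does the last step by appealing directly to the Miranda--Persson list of extremal rational elliptic surfaces (their entry \texttt{(71)}), whereas you give a self-contained argument via Shioda--Tate and the discriminant comparison $|\mathrm{disc}(\mathrm{NS})|\cdot|\mathrm{MW}_{\mathrm{tors}}|^2=|\mathrm{disc}(T)|$; you then mention the classification only as an alternative. Your route has the advantage of not relying on an external table and of making explicit the Euler-number bookkeeping and the local analysis at $u=\infty$, while the paper's route is shorter once one is willing to cite the classification. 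Substantively the two proofs are the same.
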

\begin{proof}
The discriminant of Equation~\eqref{eqn:J6_rational} is the polynomial $(1-\lambda_1)^4 (1-\lambda_2)^4 u^2 (u-1)^2$, representing a homogenous polynomial of degree $12$ on the base curve $\mathbb{P}^1$. It follows that the Weierstrass model has two singular fibers of Kodaira-type $I_2$ for $u =0,1$, a singular fiber of type $I_2^*$ for $u=\infty$. It is obvious that the Mordell-Weil group of sections contains $(\mathbb{Z}/2\mathbb{Z})^2$. In the classification of rational elliptic surface in \cite{MR1104782}, this information uniquely determines the rational surface, namely the surface ${\tt (71)}$. We can conclude that the Mordell-Weil group of sections is precisely $(\mathbb{Z}/2\mathbb{Z})^2$ whence an extremal rational elliptic surface; see also  \cite{MR2409557}.
\end{proof}
\par This implies the following:
\begin{corollary}
The rational elliptic surface $\mathcal{S}$ is the modular elliptic surface for the genus-zero, index-six congruence subgroup $\Gamma(2) \subset \operatorname{PSL}_2(\mathbb{Z})$.
\end{corollary}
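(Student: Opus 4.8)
The plan is to recognize $\mathcal{S}$ as the relatively minimal model of the Legendre pencil over $\mathbb{P}^1_\lambda$, to invoke the classical identification of that surface with the modular elliptic surface of $\Gamma(2)$, and then to close the loop with the identification of $\mathcal{S}$ already obtained in Lemma~\ref{lem:rational_surface}. Concretely: since the Hauptmodul $\lambda$ identifies $X(\Gamma(2))$ with $\mathbb{P}^1_\lambda$, with cusps at $\lambda=0,1,\infty$, and since the elliptic curve $\mathbb{C}/(\mathbb{Z}+\tau\mathbb{Z})$ over $\tau\in\Gamma(2)\backslash\mathbb{H}$ is isomorphic to the Legendre curve $y^2=x(x-1)(x-\lambda)$ with its three $2$-torsion points $(0,0),(1,0),(\lambda,0)$ supplying the level-two structure, the modular elliptic surface $\mathcal{S}_{\Gamma(2)}\to X(\Gamma(2))$ is precisely the relatively minimal elliptic surface birational to $\{y^2=x(x-1)(x-\lambda)\}\to\mathbb{P}^1_\lambda$; equivalently, the monodromy of the Legendre pencil about $\lambda=0,1,\infty$ generates the image in $\operatorname{PSL}_2(\mathbb{Z})$ of $\Gamma(2)$.

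Second, I would pin down the invariants of $\mathcal{S}_{\Gamma(2)}$. Completing the cube in the Legendre equation produces a Weierstrass model whose coefficients $g_2,g_3$ have degrees $2$ and $3$ in $\lambda$ and whose discriminant is $\Delta=16\,\lambda^2(\lambda-1)^2$; homogenizing over $\mathbb{P}^1_\lambda$ one reads off, at $\lambda=0$ and $\lambda=1$, vanishing orders $(\operatorname{ord} g_2,\operatorname{ord} g_3,\operatorname{ord}\Delta)=(0,0,2)$, hence two fibers of type $I_2$, and at $\lambda=\infty$ vanishing orders $(2,3,8)$, hence a fiber of type $I_2^*$. The three $2$-torsion sections give $(\mathbb{Z}/2\mathbb{Z})^2\subseteq\operatorname{MW}$, and since $e(I_2)+e(I_2)+e(I_2^*)=2+2+8=12$ already saturates the Euler number of a rational elliptic surface, the Shioda--Tate formula forces $\operatorname{rank}\operatorname{MW}=0$, so $\operatorname{MW}=(\mathbb{Z}/2\mathbb{Z})^2$ and the surface is extremal with configuration $I_2+I_2+I_2^*$. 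This is exactly the datum in Lemma~\ref{lem:rational_surface}, which identifies $\mathcal{S}$ as the unique such extremal rational elliptic surface, namely ${\tt (71)}$ in \cite{MR1104782}. Finally, the constant quadratic twist by $(1-\lambda_1)(1-\lambda_2)$ appearing in Equation~\eqref{eqn:J6_rational} scales $g_2,g_3,\Delta$ by $c^2,c^3,c^6$ respectively, hence leaves the Kodaira fibers, the $j$-map, and the geometric Mordell--Weil lattice unchanged; over $\overline{\mathbb{Q}(\lambda_1,\lambda_2)}$ it is an isomorphism, so $\mathcal{S}$ is geometrically the Legendre surface and therefore $\mathcal{S}\cong\mathcal{S}_{\Gamma(2)}$.

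The step I expect to require the most care is the type of the fiber over the cusp $\lambda=\infty$: a modular elliptic surface carries, a priori, an $I_h$ fiber over a cusp of width $h$, yet here the width is $2$ while the fiber is $I_2^*$, not $I_2$. This $*$-twist is forced by $-I\in\Gamma(2)$ --- equivalently by the fact that $Y(\Gamma(2))$ is only a coarse moduli space, so that the Legendre pencil is merely a quadratic-twist representative of the universal family --- and one must check both that this is consistent with the convention for ``modular elliptic surface'' of a congruence subgroup containing $-I$ and that it is the same $I_2^*$ fiber at $u=\infty$ that appears in Lemma~\ref{lem:rational_surface}. Once that bookkeeping is settled, the identification with ${\tt (71)}$, hence with $\mathcal{S}$, is immediate. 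Alternatively, one could bypass the classification of rational elliptic surfaces entirely by computing the monodromy representation of the pencil~\eqref{eqn:J6_rational} directly and checking that its image in $\operatorname{PSL}_2(\mathbb{Z})$ is $\overline{\Gamma(2)}$, but the fiber-matching route through Lemma~\ref{lem:rational_surface} is shorter.
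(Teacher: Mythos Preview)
Your proposal is correct, and it is considerably more self-contained than the paper's own argument. The paper's proof is a one-line citation: it simply invokes \cite{MR783064} (case \#12 in the table on p.~79 there) for the statement that the extremal rational elliptic surface with singular fibers $I_2+I_2+I_2^*$ is the modular elliptic surface attached to $\Gamma(2)$, and then appeals to Lemma~\ref{lem:rational_surface}, which has already pinned down $\mathcal{S}$ as exactly that surface.

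What you do differently is to \emph{reprove} the content of that citation rather than invoke it: you recognize $\mathcal{S}$, up to the harmless constant twist by $(1-\lambda_1)(1-\lambda_2)$, as the Legendre pencil, compute its Kodaira fibers and Mordell--Weil group directly, and identify the Legendre pencil with $\mathcal{S}_{\Gamma(2)}$ via the Hauptmodul $\lambda$. This buys you an argument that does not depend on the classification tables in \cite{MR783064}, at the cost of having to confront the $-I\in\Gamma(2)$ subtlety you flag (the $I_2^*$ rather than $I_2$ at the cusp). That point is genuine and your handling of it is sound; the paper sidesteps it entirely by outsourcing to the reference. Either route closes the loop through Lemma~\ref{lem:rational_surface} and the uniqueness of the surface ${\tt (71)}$ in \cite{MR1104782}.
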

\begin{proof}
It was proven in \cite{MR783064} that the rational elliptic surface with singular fibers of type $I_2$, $I_2$, and $I_2^*$ is the modular elliptic surface for the genus-zero, index-six congruence subgroup $\Gamma(2) \subset \operatorname{PSL}_2(\mathbb{Z})$; it was discussed as the case \#12 in \cite{MR783064}*{Table p.79}.
\end{proof}
We also consider the degree-two rational map $\Phi_{\lambda_1,\lambda_2}: \mathcal{X}_{\lambda_1,\lambda_2} \dasharrow \mathcal{S}$, given by
\begin{equation}\label{eqn:cover2}
\Phi_{\lambda_1,\lambda_2}: \Big(t, \tilde{X}, \tilde{Y}\Big) \mapsto  \Big(u, x, \tilde{y} \Big) =\left( \psi_{\lambda_1,\lambda_2}(t), \tilde{X}, \frac{\tilde{Y}}{t}\right) \,,
\end{equation}
where $\psi_{\lambda_1,\lambda_2}$ was given in Equation~(\ref{eqn:psi}). We have the following:
\begin{corollary}
The elliptic fibration with section $\mathcal{J}_6$ on the family of Kummer surface $\mathcal{X}_{\lambda_1, \lambda_2}=\operatorname{Kum}(E_1\times E_2)$ associated with the elliptic curves $E_1, E_2$ in Equation~(\ref{eqn:EC}) is induced from the modular elliptic surface $\mathcal{S}$ for the genus-zero, index-six congruence subgroup $\Gamma(2) \subset \operatorname{PSL}_2(\mathbb{Z})$ by pullback via $\Phi_{\lambda_1,\lambda_2}$.
\end{corollary}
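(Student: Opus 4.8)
The plan is to read this off the explicit models already in hand, treating the statement as a \emph{base-change} assertion rather than a claim about $\Phi_{\lambda_1,\lambda_2}$ as a morphism. First I would recall that, by the preceding corollary, the surface $\mathcal{S}$ of Equation~(\ref{eqn:J6_rational}) is the modular elliptic surface for $\Gamma(2)$, and that the base map $\psi_{\lambda_1,\lambda_2}$ of Equation~(\ref{eqn:psi}) has degree two with the ramification data recorded just before Proposition~\ref{prop:double_cover}: the points $u=0,1,\infty$ each have two unramified preimages, and the only two ramification points are the roots of $\lambda_2 t^2-\lambda_1=0$, which lie over smooth fibers of $\mathcal{S}$. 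Then I would substitute Equation~(\ref{eqn:cover2}), i.e.\ $(u,x,\tilde{y})=(\psi_{\lambda_1,\lambda_2}(t),\tilde{X},\tilde{Y}/t)$, into Equation~(\ref{eqn:J6_rational}); clearing the resulting $1/t^2$ yields precisely Equation~(\ref{eqn:J6}), which is birationally equivalent to the defining equation~(\ref{eqn:J6_orig}) of $\mathcal{J}_6$ as already observed. This simultaneously shows that $\Phi_{\lambda_1,\lambda_2}$ is a well-defined dominant rational map lying over $\psi_{\lambda_1,\lambda_2}$.

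Next I would identify $\mathcal{J}_6$ with the pullback. Over the generic point of the $t$-line the restriction of $\Phi_{\lambda_1,\lambda_2}$ to the fibers is the substitution $\tilde{Y}=t\,\tilde{y}$; since $t$ is a unit in $\mathbb{C}(t)$, this is an isomorphism from the generic fiber of Equation~(\ref{eqn:J6}) onto the $\psi_{\lambda_1,\lambda_2}$-base change of the generic fiber of Equation~(\ref{eqn:J6_rational}) (in particular the factor $t^2$, being a square in $\mathbb{C}(t)$, introduces no quadratic twist). By uniqueness of the Kodaira--N\'eron model, the relatively minimal Jacobian elliptic surface $\mathcal{J}_6$ is therefore isomorphic over $\mathbb{P}^1$ to $\psi_{\lambda_1,\lambda_2}^{*}\mathcal{S}$, that is, to the minimal resolution of $\mathcal{S}\times_{\mathbb{P}^1}\mathbb{P}^1$; and since $\Phi_{\lambda_1,\lambda_2}$ is fiberwise an isomorphism over a dense open set of the base, $\deg\Phi_{\lambda_1,\lambda_2}=\deg\psi_{\lambda_1,\lambda_2}=2$. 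As a consistency check I would verify the singular fibers directly: the $I_2$ fibers of $\mathcal{S}$ over $u=0,1$ pull back, along the unramified preimages $t=1,\lambda_1/\lambda_2$ and $t=1/\lambda_2,\lambda_1$, to four $I_2$ fibers, and the $I_2^{*}$ fiber over $u=\infty$ pulls back to $I_2^{*}$ fibers over $t=0,\infty$, while the two ramification points produce no singular fibers since they lie over smooth fibers of $\mathcal{S}$; the Euler numbers $2\cdot 8+4\cdot 2=24$ then confirm that this base change is already the K3 surface $\mathcal{X}_{\lambda_1,\lambda_2}$ and that its fiber configuration matches Lemma~\ref{lem:J6}.

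The only genuinely delicate point is the behaviour over $t=0,\infty$: there the affine change of variables $\tilde{Y}=t\,\tilde{y}$ degenerates, so one cannot conclude from the affine equations alone that the fiber types agree; one must instead pass to the Kodaira--N\'eron models (equivalently, invoke the Euler-number and fiber-type count above) to be sure that the two $I_2^{*}$ fibers are reproduced and that no twist or spurious blow-up intervenes. Granting this, the corollary follows: the $\mathcal{J}_6$-fibration on $\mathcal{X}_{\lambda_1,\lambda_2}$ is the degree-two base change of the modular elliptic surface $\mathcal{S}$ for $\Gamma(2)$ along $\psi_{\lambda_1,\lambda_2}$, realized geometrically by $\Phi_{\lambda_1,\lambda_2}$.
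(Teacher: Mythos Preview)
Your proposal is correct, and its core step---substituting $(u,x,\tilde{y})=(\psi_{\lambda_1,\lambda_2}(t),\tilde{X},\tilde{Y}/t)$ from Equation~(\ref{eqn:cover2}) into Equation~(\ref{eqn:J6_rational}) and clearing the $t^{-2}$ to recover Equation~(\ref{eqn:J6})---is exactly the paper's entire proof. The paper stops there: once the affine models match, it treats the phrase ``induced by pullback via $\Phi_{\lambda_1,\lambda_2}$'' as established.

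Everything you add beyond that one substitution is extra: the generic-fiber argument that $\tilde{Y}=t\,\tilde{y}$ is an isomorphism over $\mathbb{C}(t)$ (so no quadratic twist is introduced), the appeal to uniqueness of the Kodaira--N\'eron model to promote the birational identification to an isomorphism of relatively minimal elliptic surfaces, the explicit tracking of singular fibers through the unramified preimages of $u=0,1,\infty$, and the Euler-number check $2\cdot 8+4\cdot 2=24$. These additions are all correct and make precise what ``pullback'' means at the level of smooth models, including the point you flag about $t=0,\infty$ where the affine coordinate change degenerates. The paper simply does not engage with any of this; it is content with the Weierstrass-model computation and leaves the passage to minimal models implicit (the fiber configuration having already been recorded in Lemma~\ref{lem:J6}). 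Your version is more rigorous but not a different argument---it is the paper's argument with the bookkeeping filled in.
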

\begin{proof}
When the expressions for $(u, x, \tilde{y})$ on the right hand side of Equation~(\ref{eqn:cover2}) are substituted into Equation~(\ref{eqn:J6_rational}) and denominators are cleared we obtain Equation~(\ref{eqn:J6}).
\end{proof}
\section{Counting rational points}
\label{sec:counting}
For the case of a holomorphic family of algebraic curves, Manin predicted a correspondence between the period integrals and number of rational points over $\mathbb{F}_p$. In this section, we will generalize this correspondence to the family of Kummer surfaces $\mathcal{X}_{\lambda_1, \lambda_2}=\operatorname{Kum}(E_1\times E_2)$ and family of K3 surfaces $\mathcal{Z}_{z_1, z_2}$ introduced above.
\subsection{Manin's Unity}
The notation used here to describe cohomology groups, sheaves, and divisors is the standard notation; see \cite{MR507725}. We follow the same approach as in the case of algebraic curves to establish Manin's unity for the family of the Kummer surfaces $\mathcal{X}=\mathcal{X}_{\lambda_1, \lambda_2}$ and K3 surfaces $\mathcal{Z}=\mathcal{Z}_{z_1, z_2}$. Notice that we drop the subscripts $\lambda_1, \lambda_2$ and $z_1, z_2$, respectively, to simplify the notation whenever there is no danger of confusion. However, in contrast to the curve case, we use a different exact sequence to represent elements in $H^2(\mathcal{O}_\mathcal{X})$.  Let $E$ be the general fiber of a Jacobian elliptic fibration on $\mathcal{X}$. 
\par We have the following:
\begin{lemma}
\label{lem:eltsH2}
Elements of $H^2(\mathcal{O}_\mathcal{X})$ are non-constant meromorphic functions on $\mathcal{X}$ with poles along $E$, i.e.,
$$
H^2(\mathcal{O}_\mathcal{X}) \cong H^0(\mathcal{O}(E))/\mathbb{C} \,.
$$
\end{lemma}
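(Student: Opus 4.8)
The plan is to identify $H^2(\mathcal{O}_\mathcal{X})$ with the cokernel of a suitable restriction or inclusion map by using the standard exact sequences attached to the divisor $E$, exploiting the fact that $\mathcal{X}$ is a K3 surface so that $H^1(\mathcal{O}_\mathcal{X})=0$, $H^2(\mathcal{O}_\mathcal{X})\cong\mathbb{C}$, and $h^0(\mathcal{O}(E))=2$ since $E$ is a fiber of an elliptic fibration (so $|E|$ is a base-point-free pencil defining that very fibration). First I would write down the short exact sequence of sheaves
$$
0 \longrightarrow \mathcal{O}_\mathcal{X} \longrightarrow \mathcal{O}_\mathcal{X}(E) \longrightarrow \mathcal{O}_\mathcal{X}(E)|_E \longrightarrow 0 \,,
$$
and take the associated long exact sequence in cohomology. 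Since $\mathcal{O}_\mathcal{X}(E)|_E = \mathcal{O}_E$ (the fiber $E$ has self-intersection zero, being a fiber of a fibration), the relevant piece reads
$$
0 \to H^0(\mathcal{O}_\mathcal{X}) \to H^0(\mathcal{O}_\mathcal{X}(E)) \to H^0(\mathcal{O}_E) \to H^1(\mathcal{O}_\mathcal{X}) \to \cdots
$$
and, continuing,
$$
\cdots \to H^1(\mathcal{O}_E) \to H^2(\mathcal{O}_\mathcal{X}) \to H^2(\mathcal{O}_\mathcal{X}(E)) \to 0 \,.
$$

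Next I would compute the terms. For a K3 surface $H^1(\mathcal{O}_\mathcal{X})=0$, so the first four terms give $h^0(\mathcal{O}_\mathcal{X}(E)) = h^0(\mathcal{O}_\mathcal{X}) + h^0(\mathcal{O}_E) = 1+1 = 2$, confirming that sections of $\mathcal{O}(E)$ modulo the constants form a one-dimensional space; this already matches $h^2(\mathcal{O}_\mathcal{X})=1$. To get the isomorphism rather than just a dimension count, I would look at the tail: $H^1(\mathcal{O}_E)\cong\mathbb{C}$ (genus-one fiber), and $H^2(\mathcal{O}_\mathcal{X}(E))$ — which by Serre duality on the K3 is $H^0(\mathcal{O}_\mathcal{X}(-E))^\vee = 0$ since $-E$ is not effective — vanishes. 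Hence the connecting map $H^1(\mathcal{O}_E)\to H^2(\mathcal{O}_\mathcal{X})$ is surjective between one-dimensional spaces, so it is an isomorphism $\mathbb{C} \xrightarrow{\ \sim\ } H^2(\mathcal{O}_\mathcal{X})$. Chasing this back through the sequence, a generator of $H^2(\mathcal{O}_\mathcal{X})$ is represented by the image of a section of $H^0(\mathcal{O}_\mathcal{X}(E))$ not lying in $H^0(\mathcal{O}_\mathcal{X})$, i.e. by a non-constant meromorphic function with (at most a simple) pole along $E$; this gives the desired identification $H^2(\mathcal{O}_\mathcal{X}) \cong H^0(\mathcal{O}(E))/\mathbb{C}$, where the quotient by $\mathbb{C}$ is exactly the removal of the constant sections.

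The main obstacle — and the point I would want to state carefully — is the \v{C}ech-theoretic interpretation that makes the statement useful for the point-count later: one must check that the connecting homomorphism in the long exact sequence, when expressed in \v{C}ech cohomology for an affine cover adapted to the elliptic fibration (complement of $E$ together with a tubular neighborhood of $E$), literally sends a meromorphic function $\phi$ with pole along $E$ to the class $[\phi]\in H^2(\mathcal{O}_\mathcal{X})$ obtained by the standard two-step coboundary. This is where I would be explicit, since it is this concrete representative — a function on the affine part of $\mathcal{X}$ — whose reduction mod $p$ will be compared with the rational point count; the sheaf-cohomological argument above guarantees the iso exists, but the arithmetic application needs the representative in hand. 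I would also remark that the argument applies verbatim to $\mathcal{Z}_{z_1,z_2}$, which is likewise a K3 surface with an elliptic fibration, so the same identification $H^2(\mathcal{O}_\mathcal{Z})\cong H^0(\mathcal{O}(E))/\mathbb{C}$ holds there.
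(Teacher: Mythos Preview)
Your argument is correct and uses essentially the same ingredients as the paper --- the vanishing $H^1(\mathcal{O}_\mathcal{X})=0$ for a K3 surface, Serre duality, and the fact that $E$ is an elliptic fiber --- but you run them through a different short exact sequence. The paper uses the ideal sheaf sequence
\[
0 \longrightarrow \mathscr{I}_E=\mathcal{O}_\mathcal{X}(-E) \longrightarrow \mathcal{O}_\mathcal{X} \longrightarrow \mathcal{O}_E \longrightarrow 0
\]
and reads off from the tail of the long exact sequence that
\[
0 \to H^1(\mathcal{O}_E) \to H^2(\mathcal{O}_\mathcal{X}(-E)) \to H^2(\mathcal{O}_\mathcal{X}) \to 0\,,
\]
then applies Serre duality once, identifying $H^2(\mathcal{O}_\mathcal{X}(-E))\cong H^0(\mathcal{O}_\mathcal{X}(E))$ and $H^1(\mathcal{O}_E)\cong\mathbb{C}$, so that the quotient $H^0(\mathcal{O}(E))/\mathbb{C}$ appears in one step. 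Your sequence $0\to\mathcal{O}_\mathcal{X}\to\mathcal{O}_\mathcal{X}(E)\to\mathcal{O}_E\to 0$ is the twist of the paper's by $\mathcal{O}(E)$; it yields the same conclusion, but in two separate pieces --- $H^0(\mathcal{O}(E))/\mathbb{C}\cong H^0(\mathcal{O}_E)$ from the front and $H^1(\mathcal{O}_E)\cong H^2(\mathcal{O}_\mathcal{X})$ from the tail --- which you then splice together via the (noncanonical) identification $H^0(\mathcal{O}_E)\cong H^1(\mathcal{O}_E)\cong\mathbb{C}$. The paper's route is marginally cleaner because the quotient presentation is produced directly by the exact sequence rather than assembled; on the other hand, your additional \v{C}ech-theoretic commentary about how the connecting map hands you an explicit meromorphic representative is exactly what is needed downstream and is more explicit than what the paper writes.
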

\begin{proof}
Consider the short exact sequence
\begin{equation}
\xymatrix{
0 \ar[r] & \mathscr{I}_E \ar[r] & \mathcal{O}_\mathcal{X} \ar[r]  & \mathcal{O}_E \ar[r] & 0 \,,
}
\end{equation}
where $\mathscr{I}_E$ is the ideal sheaf supported on $E$, which implies
$$
\xymatrix{
0=H^1(\mathcal{O}_\mathcal{X}) \ar[r] & H^1(\mathcal{O}_E) \ar[r]  & H^2(\mathcal{O}_\mathcal{X}(-E)) \ar[r] & H^2(\mathcal{O}_\mathcal{X}) \ar[r] & H^2(\mathcal{O}_E)=0 \,.
}
$$
By Serre duality, it follows
\begin{equation}
\begin{split}
H^2(\mathcal{O}_\mathcal{X}(-E)) 	& \cong H^0(\mathcal{O}_\mathcal{X}(K+E))=H^0(\mathcal{O}_\mathcal{X}(E)) \,, \\
H^1(E,\mathcal{O}_E)	& \cong H^0(E,\mathcal{O}_E(K_E))=H^0(\mathcal{O}_E)=\mathbb{C}
\end{split}
\end{equation}
Hence,
$$
H^2(\mathcal{O}_\mathcal{X}) \cong H^0(\mathcal{O}_\mathcal{X}(E))/\mathbb{C} \,.
$$
\end{proof}
At a point $q\in \mathcal{X}$, denote local coordinates by $(z, w)$ such that the elliptic fiber $E \ni q$ is given by $z=0$ and $q$ is given by $(z, w)=(0,0)$. In terms of these local coordinates, the holomorphic two-form on $\mathcal{X}$ is given by
$$
\omega=dz\wedge dw + \sum_{i, j\ge 1}c_i \, d_j \, z^i w^j \, dz \wedge dw \,,
$$
and, using Lemma~\ref{lem:eltsH2}, a nontrivial element $s\in H^2(\mathcal{O}_\mathcal{X})$ is represented by 
$$
\frac{1}{z}+a_0+a_1z+b_1w+c_{11}z w+\cdots \,.
$$
Underlying Serre duality is the residue map over a field $k$ given by
\begin{gather*}
H^2(\mathcal{X},\mathcal{O}_\mathcal{X})\times H^0(\mathcal{X},K_\mathcal{X}) \longrightarrow k \,,\\
s\times\omega  \longrightarrow \operatorname{Res}_q (s\cdot\omega) \,.
\end{gather*}
If we consider $s$ a function of the local coordinates, i.e., $s=s(z, w)$, then the residue satisfies
\begin{equation}\label{eq:def_of_c_p-1}
\operatorname{Res}_q \Big( s(z^p, w^p)\cdot\omega \Big)=c_{p-1} \,,
\end{equation}
for $p \in \mathbb{N}$. We define $|\mathcal{X}|_p$ to be the number of rational points of the affine part of $\mathcal{X}$  over the finite field $\mathbb{F}_p$ in its affine model for the elliptic fibration $\mathcal{J}_6$ given by Equation~(\ref{eqn:J6_orig}).  We have the following:
\begin{lemma}
\label{lem:residue}
The number $|\mathcal{X}|_p$ of rational points of the affine part of $\mathcal{X}=\mathcal{X}_{\lambda_1,\lambda_2}$ over $\mathbb{F}_p$ is given by
\begin{equation}
\label{eqn:residue}
 |\mathcal{X}|_p \equiv c_{p-1} \mod p \,,
\end{equation}
where $c_{p-1}$ is a function of the moduli $(\lambda_1,\lambda_2)$ of $\mathcal{X}$ given by Equation~\eqref{eq:def_of_c_p-1}.
\end{lemma}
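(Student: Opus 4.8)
The plan is to follow the same route as in the algebraic-curve case of \cite{MR3613974}: count $|\mathcal{X}|_p$ directly on the affine Weierstrass model~\eqref{eqn:J6_orig} of the fibration $\mathcal{J}_6$, reduce the count modulo $p$ to a single Taylor coefficient of a power of the defining polynomial, and then recognize that coefficient as the residue $c_{p-1}$ of~\eqref{eq:def_of_c_p-1}. Concretely, write $g(t,X)=X\bigl(X-A(t)\bigr)\bigl(X-B(t)\bigr)$ with $A(t)=t(t-1)(\lambda_2t-\lambda_1)$ and $B(t)=t(t-\lambda_1)(\lambda_2t-1)$, so that the affine part of $\mathcal{X}$ in~\eqref{eqn:J6_orig} is $\{(t,X,Y):Y^2=g(t,X)\}$. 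For fixed $(t,X)\in\mathbb{F}_p^2$ the number of $Y\in\mathbb{F}_p$ with $Y^2=g(t,X)$ equals $1+\chi\bigl(g(t,X)\bigr)$, where $\chi$ is the quadratic residue character with $\chi(0)=0$; since $\chi(a)=a^{(p-1)/2}$ in $\mathbb{F}_p$ by Euler's criterion and $p^2\equiv0$, this gives $|\mathcal{X}|_p\equiv\sum_{t,X\in\mathbb{F}_p}g(t,X)^{(p-1)/2}\pmod p$. Expanding $g(t,X)^{(p-1)/2}=\sum_{i,j}C_{ij}\,t^iX^j$ in $\mathbb{F}_p[t,X]$ and invoking $\sum_{a\in\mathbb{F}_p}a^m\equiv-1\pmod p$ when $(p-1)\mid m$, $m\ge1$, and $\equiv0$ otherwise, one obtains $|\mathcal{X}|_p\equiv\sum C_{ij}\pmod p$, the sum extended over $i,j\ge1$ divisible by $p-1$.

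Next I would show a degree count collapses this to a single term. Since $\deg_X g(t,X)^{(p-1)/2}=\tfrac32(p-1)<2(p-1)$, the only admissible $X$-exponent is $j=p-1$; and expanding $g^{(p-1)/2}$ multinomially in the monomials $X^3$, $-(A+B)X^2$, $ABX$ one checks that each monomial of $X$-degree exactly $p-1$ carries a $t$-factor of the form $(A+B)^{b}(AB)^{c}$, where $a+b+c=\tfrac{p-1}{2}$ and $3a+2b+c=p-1$ force $c=a$ and $b=\tfrac{p-1}{2}-2a$, hence $3b+6c=\tfrac32(p-1)$. Thus every such monomial has $t$-degree at most $\tfrac32(p-1)<2(p-1)$, so $i=p-1$ is the only admissible $t$-exponent as well, and therefore $|\mathcal{X}|_p\equiv C_{p-1,p-1}\pmod p$, with $C_{p-1,p-1}$ the coefficient of $t^{p-1}X^{p-1}$ in $g(t,X)^{(p-1)/2}$.

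It then remains to identify $C_{p-1,p-1}$ with $c_{p-1}$. On $\mathcal{X}$ one has $g(t,X)^{(p-1)/2}=Y^{p-1}$, and since $\omega=dt\wedge dX/Y$,
\[
 \Bigl(\tfrac{Y}{tX}\Bigr)^{p}\,\omega \;=\; \frac{Y^{p-1}}{t^{p}X^{p}}\,dt\wedge dX ,
\]
so that $C_{p-1,p-1}$ is precisely the residue of $\bigl(Y/(tX)\bigr)^{p}\,\omega$ extracting the coefficient of $t^{p-1}X^{p-1}$. Passing to the local coordinates $(z,w)$ near a point $q$ of a general fiber and the generator $s$ of $H^2(\mathcal{O}_\mathcal{X})$ set up before~\eqref{eq:def_of_c_p-1}, and using that over $\mathbb{F}_p$ the $p$-th power of a rational function coincides with the substitution $z\mapsto z^{p}$, $w\mapsto w^{p}$ in its local expansion, this residue is exactly $\operatorname{Res}_q\!\left(s(z^{p},w^{p})\,\omega\right)=c_{p-1}$. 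Chaining the three steps gives $|\mathcal{X}|_p\equiv c_{p-1}\pmod p$.

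The step I expect to be the main obstacle is the degree/bookkeeping argument of the second paragraph: this is exactly where the singular fibers of $\mathcal{J}_6$ — the two $I^*_2$ fibers over $t=0,\infty$ and the four $I_2$ fibers of Lemma~\ref{lem:J6} — must be shown not to contribute the spurious terms $C_{2(p-1),p-1}$ and $C_{3(p-1),p-1}$, the surface counterpart of the singular-fiber analysis already needed for curves in \cite{MR3613974}. A second, more structural subtlety is making the residue dictionary of the last paragraph fully rigorous in the two-variable setting, i.e.\ verifying that the single local residue at $q$ computes, through Lemma~\ref{lem:eltsH2}, the Serre-duality pairing of the $p$-th-power class with $\omega$; this is the two-variable analogue of the Cartier-operator (Hasse--Witt) computation that underlies Manin's unity for algebraic curves.
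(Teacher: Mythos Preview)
Your route is genuinely different from the paper's. The paper never computes $|\mathcal{X}|_p$ directly on the affine model; instead it passes to the smooth model $\tilde{\mathcal{X}}$, invokes the holomorphic Lefschetz fixed-point formula to obtain $|\tilde{\mathcal{X}}|_p = 1 + \operatorname{Tr}\operatorname{Fr}_p^*|_{H^2(\mathcal{O}_\mathcal{X})} = 1 + c_{p-1}$, and then \emph{subtracts} the contributions $|\mathcal{X}_\infty|_p$ and $|\mathcal{X}_{\mathrm{sing}}|_p$ coming from the section at infinity and from resolving the $I_2$ and $I_2^*$ fibers. Those corrections are computed in an auxiliary lemma (giving $|I_2|_p\equiv 0$, $|I_2^*|_p\equiv 1$), and the bookkeeping yields $|\mathcal{X}_\infty|_p+|\mathcal{X}_{\mathrm{sing}}|_p\equiv 1$, hence the statement. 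So where the paper handles the singular fibers by an explicit resolution count, you bypass them with a degree bound and push the work into a Hasse--Witt/Cartier identification.

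Your steps through the degree count are correct, and your stated ``main obstacle'' is not one: the inequality $\deg_t\bigl[\text{coeff.\ of }X^{p-1}\text{ in }g^{(p-1)/2}\bigr]\le \tfrac32(p-1)<2(p-1)$ is a purely polynomial fact about $A,B$ having $t$-degree~$3$, so no special analysis of the $I_2^*$ or $I_2$ fibers is needed to kill $C_{2(p-1),p-1}$ and $C_{3(p-1),p-1}$. The genuine gap is the last step, the identification $C_{p-1,p-1}=c_{p-1}$. The function $Y/(tX)$ is \emph{not} a representative of the class $s\in H^0(\mathcal{O}_\mathcal{X}(E))/\mathbb{C}$ set up before~\eqref{eq:def_of_c_p-1}: its polar divisor contains the singular fiber $t=0$ and the two-torsion section $X=0$, not a single smooth fiber $E$, so the local prescription $\operatorname{Res}_q(s(z^p,w^p)\,\omega)$ does not literally apply to it. In addition, replacing $s(z^p,w^p)$ by $s^p$ conflates relative and absolute Frobenius (it is only harmless once $\lambda_1,\lambda_2$ are fixed in $\mathbb{F}_p$). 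To close this you would need a \v{C}ech computation of $H^2(\mathcal{O}_\mathcal{X})$ adapted to the affine chart in $(t,X)$, or an explicit Cartier-operator argument on $\omega=dt\wedge dX/Y$, showing that the Frobenius eigenvalue on $H^2(\mathcal{O}_\mathcal{X})$ is exactly the coefficient of $(tX)^{p-1}$ in $g^{(p-1)/2}$. The paper avoids this issue entirely by working on $\tilde{\mathcal{X}}$ with the Lefschetz formula, at the cost of the singular-fiber count.
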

\begin{proof}
For the elliptic fibration $\mathcal{J}_6$ given by the affine Equation~(\ref{eqn:J6_orig}), one can write down a Weierstrass normal form in Equation~(\ref{Eq:Weierstrass}) such that every (affine) fiber is compactified by adding the point at infinity. This adds to a rational point count a contribution $|\mathcal{X}_\infty|_p$. The resulting surface $\bar{\mathcal{X}}$ has only rational double point singularities and irreducible fibers. In the standard process of resolving the singular fibers of the elliptic fibration with section \cite{MR0233819}, additional rational components are introduced which adds to a rational point-count a second contribution $|\mathcal{X}_{\rm sing}|_p$. Thus, the number of rational points of the minimal resolution $\tilde{\mathcal{X}}$ is given by
\begin{equation}
|\tilde{\mathcal{X}}|_p = \underbrace{|\mathcal{X}|_p}_{\substack{\text{\# of rational points} \\ \text{of affine part}}}
+\underbrace{|\mathcal{X}_\infty|_p}_{\substack{\text{\# of rational points} \\ \text{at infinity}}}
+\underbrace{|\mathcal{X}_{\rm sing}|_p}_{\substack{\text{\# of rational points} \\ \text{from singular fibers}}} \,.
\end{equation}
On the other hand, it follows from the holomorphic Lefschetz fixed-point theorem and Equation~(\ref{eq:def_of_c_p-1}) that we have
\begin{equation}
|\tilde{\mathcal{X}}|_p =1+ \operatorname{Tr} \operatorname{Fr}_p^*\big|_{H^2(\mathcal{X},\mathcal{O}_\mathcal{X})} = 1+ c_{p-1}\,,
\end{equation}
where  $\operatorname{Fr}_p$ is the Frobenius endomorphism that maps a point with coordinates $(z, w)$ to the point with the coordinates $(z^p, w^p)$. 
\par To compute $|\tilde{\mathcal{X}}|_p$ we need to be able to compute the number of rational points that emerge when resolving singular fibers in an elliptic fibration. The singular fibers that appear in Lemma~\ref{lem:J6} are of the Kodaira type $I_2^*$ or $I_2$. For later convenience we also include the fibers of Kodaira type $I_0^*$. We denote the number of rational points over the finite field $\mathbb{F}_p$ that emerge when the resolving singular fibers of type $I_n$ or $I_n^*$ by $|I_n|_p$ or $|I_n^*|_p$, respectively. We have the following:
\begin{lemma} 
\label{lem:count_singular_fibers}
$ |I_2|_p\equiv0 \,, \quad |I_0^*|_p\equiv1 \,, \quad  |I_2^*|_p\equiv 1\mod p\,. $
\end{lemma}
\begin{proof}
In the case of an $I_2$ fiber, the components of the fiber meet in two points and the rationality of the components does not imply that of the intersection points.
Thus, we have to discuss two cases:
\begin{itemize}
\item If the intersection point is not a rational point, after blow-up, the new points are not rational. Hence, the total contribution to a point-count is 
$$
\underbrace{-1}_{\substack{\text{delete from } \\ \text{original manifold}}}
+\underbrace{0}_{\substack{\text{no new} \\ \text{points}}}
+\underbrace{p+1}_{\substack{\text{new exceptional} \\ \text{divisor}}} \equiv 0.
$$

\item If the intersection is a rational point, the new points are rational. Hence, the total contribution to a point-count is 
$$
\underbrace{-1}_{\substack{\text{delete from } \\ \text{original manifold}}}
+\underbrace{2}_{\substack{\text{new two} \\ \text{points}}}
+\underbrace{p+1-2}_{\substack{\text{new exceptional} \\ \text{divisor}}} \equiv 0.
$$
\end{itemize}
In summary, we have
\begin{equation} \label{I_2_counting}
 |I_2|_p \equiv 0 \mod p.
\end{equation}
For the sake of completeness we remark that Equation~(\ref{eqn:J6_orig}) assumes singular fibers of type $I_2$ over $t=1, \lambda_2, 1/\lambda_1, \lambda_2/\lambda_1$. In each of these singular fibers, the singular point is given by $(X,Y)=(0,0)$ which becomes the point of intersection of the reducible components after blowing up. The statement can be checked by observing that Equation~(\ref{eqn:J6_orig}) assumes the form $Y^2 = c_1 X^2 + O(X^3)$ for some rational constant $c_1$ over $t=1, \lambda_2, 1/\lambda_1, \lambda_2/\lambda_1$.
\par Since $I_0^*,I_2^*$ fibers only contain rational components and their intersections are coming from blow-ups of rational points, namely the point $(X,Y)=(0,0)$, we can count these rational points directly from the corresponding singular fiber diagrams. We obtain:
\begin{gather}
\label{I_0^*_counting} |I_0^*|_p=\underbrace{5(p+1)}_{\text{$5$ $\mathbb{P}^1_p$ curves}}-\underbrace{4}_{\text{$4$ intersections}}
	\equiv 5-4=1 \mod p\,, \\
\label{I_2^*_counting} |I_2^*|_p=\underbrace{7(p+1)}_{\text{$7$ $\mathbb{P}^1_p$ curves}}-\underbrace{6}_{\text{$6$ intersections}}
	\equiv 7-6=1 \mod p \,.	
\end{gather}
\end{proof}
\par Using Lemma~\ref{lem:count_singular_fibers} we obtain $|\mathcal{X}_{\rm sing}|_p=1+1\equiv 2$ due to the contributions from the two $I_2^*$ fibers.
It is easy to check that $|\mathcal{X}_{\infty}|_p=p+1-2\equiv -1 \mod p$ where $-2$ is due to over-counting at the two infinite points of the two $I_2^*$ fibers that are already accounted for. In summary we have, $|\mathcal{X}_\infty|_p+|\mathcal{X}_{\rm sing}|_p\equiv 1$, which implies
$$
|\mathcal{X}|_p \equiv c_{p-1} \mod p \,.
$$
\end{proof}
\par We now prove a form of Manin’s unity principle for the family of Kummer surfaces $\mathcal{X}_{\lambda_1, \lambda_2}$. We recall that $|\mathcal{X}_{\lambda_1,\lambda_2}|_p$ is the number of rational points of the affine part of $\mathcal{X}_{\lambda_1,\lambda_2}$  over the finite field $\mathbb{F}_p$ in its affine model given by Equation~(\ref{eqn:J6_orig}). We have the following:
\begin{theorem}
\label{thm:main1}
The counting function $F(\lambda_1,\lambda_2)=|\mathcal{X}_{\lambda_1,\lambda_2}|_p$ satisfies the Picard-Fuchs system associated with $\mathcal{X}_{\lambda_1,\lambda_2}$. That is, 
\begin{equation}
\label{eqn:PF_J4_b}
 L_{\lambda_1}F=L_{\lambda_2}F \equiv 0 \mod p\,,
\end{equation}
where $L_z$ is the differential operator in Equation~(\ref{eq:euler}) for $2\alpha=2\beta=\gamma=1$.
\end{theorem}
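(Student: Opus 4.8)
The plan is to compute $|\mathcal{X}_{\lambda_1,\lambda_2}|_p$ explicitly modulo $p$ using the affine model~\eqref{eqn:J6_orig} for the fibration $\mathcal{J}_6$, show that the resulting counting function agrees modulo $p$ with the truncated hypergeometric product
$$
(-1)^{\frac{p-1}{2}}\,\hpg{2}{1,\frac{p-1}{2}}{\frac{1-p}{2},\frac{1-p}{2}}{1}{\lambda_1}\;\hpg{2}{1,\frac{p-1}{2}}{\frac{1-p}{2},\frac{1-p}{2}}{1}{\lambda_2}\,,
$$
and then invoke Igusa's formula~\eqref{eqn:2F1trunc} together with Lemma~\ref{lem:PF_J4} to conclude that $F(\lambda_1,\lambda_2)$ is annihilated by $L_{\lambda_1}$ and $L_{\lambda_2}$ modulo $p$. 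The key structural input is that $\mathcal{J}_6$ is, by the corollary following Lemma~\ref{lem:rational_surface}, pulled back via $\Phi_{\lambda_1,\lambda_2}$ from the modular elliptic surface $\mathcal{S}$, which is itself (Equation~\eqref{eqn:J6_rational}) the constant quadratic twist of the Legendre pencil. So the point count should factor through the Legendre count, which is exactly where Igusa enters.

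Concretely, I would first fix the base value $t\in\mathbb{F}_p$ and count points on the fiber of~\eqref{eqn:J6_orig} over $t$. For a curve $Y^2 = G(t,X)$ over $\mathbb{F}_p$, the number of affine solutions is $\sum_{X\in\mathbb{F}_p}\big(1 + \chi(G(t,X))\big)$ where $\chi$ is the quadratic character, and $\chi(a)\equiv a^{(p-1)/2}\bmod p$. Summing $a^{(p-1)/2}$ over a polynomial argument and using the elementary fact that $\sum_{X\in\mathbb{F}_p} X^k \equiv -1 \bmod p$ iff $(p-1)\mid k$ (and $\equiv 0$ otherwise), one extracts a coefficient: only the monomial of degree exactly $p-1$ in $X$ in the expansion of $G(t,X)^{(p-1)/2}$ contributes, after which the outer sum over $t\in\mathbb{F}_p$ performs the same extraction in $t$. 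Carrying the binomial expansion through, the double extraction produces precisely a truncated double sum in $\lambda_1,\lambda_2$; the Legendre-twist structure of~\eqref{eqn:J6_rational} (via the base change $\psi_{\lambda_1,\lambda_2}$) is what forces the answer to separate into a product of two one-variable truncated sums, each of which is the Igusa expression~\eqref{eqn:2F1trunc} for $\mathcal{X}_{\lambda_i}$. One must track carefully the leading coefficient of $G(t,X)^{(p-1)/2}$ as a polynomial in $X$ — this is where the overall sign $(-1)^{(p-1)/2}$ and the factors $(1-\lambda_i)$ from the twist get absorbed (the twist factor contributes $\big((1-\lambda_1)(1-\lambda_2)\big)^{(p-1)/2}\equiv\chi((1-\lambda_1)(1-\lambda_2))$, which, being a square times itself in the relevant normalization, is harmless, or else is compensated by the rational-point bookkeeping).

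The subtle bookkeeping step, which I expect to be the main obstacle, is the passage from $|\tilde{\mathcal{X}}|_p$ (the count on the smooth K3) to $|\mathcal{X}|_p$ (the affine count in the $\mathcal{J}_6$ model): this is exactly what Lemma~\ref{lem:residue} sets up, identifying $|\mathcal{X}|_p\equiv c_{p-1}$ with $c_{p-1}=\operatorname{Tr}\operatorname{Fr}_p^*|_{H^2(\mathcal{O}_\mathcal{X})}$, after subtracting the contributions $|\mathcal{X}_\infty|_p$ and $|\mathcal{X}_{\rm sing}|_p$ from points at infinity and from resolving the singular fibers (the two $I_2^*$ and four $I_2$ fibers of Lemma~\ref{lem:J6}, counted by Lemma~\ref{lem:count_singular_fibers}). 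Since Lemma~\ref{lem:residue} has already done this reduction and shown $|\mathcal{X}_\infty|_p+|\mathcal{X}_{\rm sing}|_p\equiv 1$, the remaining work is purely the character-sum computation above together with verifying that the factored Igusa product, reduced modulo $p$, coincides with $\hpg{2}{1,\frac{p-1}{2}}{\frac{1-p}{2},\frac{1-p}{2}}{1}{\lambda_1}\hpg{2}{1,\frac{p-1}{2}}{\frac{1-p}{2},\frac{1-p}{2}}{1}{\lambda_2}$, which in turn satisfies $L_{\lambda_1}F\equiv L_{\lambda_2}F\equiv 0\bmod p$ because the truncated ${}_2F_1$ satisfies the Euler operator~\eqref{eq:euler} modulo $p$ up to an $O(\lambda_i^{(p-1)/2+1})$ error that vanishes modulo $p$ (this is the one-variable analogue of the Corollary after~\eqref{eqn:2Ftrunc}, and follows because $L_z$ is linear and the $(p-1)/2$-truncation of the holomorphic solution solves~\eqref{eq:euler} termwise up to the first omitted order). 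Finishing then amounts to reconciling the product of two truncated ${}_2F_1$'s against $L_{\lambda_1},L_{\lambda_2}$ acting in separate variables, which is immediate.
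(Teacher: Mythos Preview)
Your approach is genuinely different from the paper's, and it contains a real gap.

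\textbf{What the paper does.} The paper's proof is short and cohomological, not computational. From Lemma~\ref{lem:PF_J4} the holomorphic two-form satisfies the inhomogeneous equation $L_{\lambda_i}\omega = d f_{\lambda_i}$ for some local functions $f_{\lambda_i}$. Pulling back by Frobenius and using that $d(z^p)=p\,z^{p-1}\,dz\equiv 0\bmod p$, the right-hand side dies modulo $p$; matching coefficients of $z^{p-1}$ gives $L_{\lambda_i}(c_{p-1})\equiv 0\bmod p$. Lemma~\ref{lem:residue} already established $|\mathcal{X}|_p\equiv c_{p-1}$, and the theorem follows. No explicit formula for $|\mathcal{X}|_p$ is computed or needed at this stage; Proposition~\ref{prop:countX} comes later and serves a different purpose.

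\textbf{The gap in your argument.} You work in the $\mathcal{J}_6$ model~\eqref{eqn:J6_orig} and assert that the pullback structure $\Phi_{\lambda_1,\lambda_2}:\mathcal{X}\dashrightarrow\mathcal{S}$ forces the character sum to separate into a product of two truncated ${}_2F_1$'s. This is not justified. The pullback reduces the fiberwise count to the Legendre count at $u=\psi_{\lambda_1,\lambda_2}(t)$, so you are left with a sum of the form $\sum_{t\in\mathbb{F}_p} H_p\big(\psi_{\lambda_1,\lambda_2}(t)\big)$ with $H_p$ the Hasse polynomial. The rational function $\psi$ mixes $\lambda_1,\lambda_2$ in its coefficients, and there is no mechanism in what you wrote that would uncouple them. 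Indeed Proposition~\ref{prop:countX} carries out exactly the $\mathcal{J}_6$ character-sum computation you describe and obtains a doubly-indexed multinomial sum that is \emph{not} manifestly a product; factoring it is tantamount to the identity of Theorem~\ref{thm:main2_intro}, which the paper proves by other means.

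\textbf{How to repair it.} If you want an elementary route, compute in the $\mathcal{J}_4$ model~\eqref{eqn:J4} instead: there the right-hand side is literally a product in $x_1$ and $x_2$, so the character sum factors immediately into the two Igusa expressions, and each truncated ${}_2F_1$ is killed modulo $p$ by $L_{\lambda_i}$. The price is that $|\mathcal{X}|_p$ is, by definition, the $\mathcal{J}_6$ affine count, so you must reconcile the two affine counts. You can do that either by redoing the singular-fiber bookkeeping of Lemma~\ref{lem:residue} for the $\mathcal{J}_4$ fibration (four $I_0^*$ fibers, so $|\mathcal{X}_{\rm sing}|_p\equiv 4$ and $|\mathcal{X}_\infty|_p\equiv -3$, again summing to $1$) to see both affine counts equal $c_{p-1}$, or by tracking the birational transformation~\eqref{eqn:transfo_fibration} over $\mathbb{F}_p$. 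Either way, the cohomological identification $|\mathcal{X}|_p\equiv c_{p-1}$ is doing the real work --- which is precisely why the paper runs the Manin--Frobenius argument directly on $c_{p-1}$ and skips the explicit count altogether.
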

\begin{proof}
It follows from Lemma~\ref{lem:PF_J4} that any period integral $F(\lambda_1,\lambda_2)$ satisfies $L_{\lambda_1}F=L_{\lambda_2}F=0$ where $L_z$ is the homogeneous Fuchsian differential operator given in Equation~(\ref{eq:euler}) with $2\alpha=2\beta=\gamma=1$. Therefore, the holomorphic two-form $\omega$ satisfies the inhomogeneous Picard-Fuchs equations
\begin{equation}
\label{eqn:PF_inhomog}
L_{\lambda_i}\Big(\omega(z, w)\Big)=d f_{\lambda_i}(z, w) \,,
\end{equation}
for $i=1,2$, local coordinates given by $(z, w)$, and functions $f_i$ of the form $f_{\lambda_i}=ez+\cdots$ at least locally. The pull-back of Equation~(\ref{eqn:PF_inhomog}) by the Frobenius endomorphism $\operatorname{Fr}_p$ then yields
$$
L_{\lambda_i}\big(c_{p-1}z^{p-1}+\cdots\big)= d f_{\lambda_i}(z^p, w^p)= 
	\underbrace{p\cdot ez^{p-1}}_{\text{derivative of $z^p$ term}}+\cdots\equiv 0\cdot z^{p-1}+\cdots \mod p \,.
$$
for $i=1,2$, whence $L_{\lambda_i}(c_{p-1}) \equiv 0 \mod p$. The proof then follows from Lemma~\ref{lem:residue} and Lemma~\ref{lem:PF_J4}
\end{proof}
\par We can also prove Manin's unity for the family of K3 surfaces $\mathcal{Z}_{z_1, z_2}$. We define $|\mathcal{Z}_{z_1, z_2}|_p$ to be the number of the rational points of $\mathcal{Z}_{z_1, z_2}$  over the finite field $\mathbb{F}_p$ in its affine model given by Equation~(\ref{eqn:J6_dual_GKZ_twist}). We have the following:
\begin{corollary}
\label{cor:main1}
The counting function $f(z_1, z_2)=|\mathcal{Z}_{z_1, z_2}|_p$ satisfies the Picard-Fuchs system associated with $\mathcal{Z}_{z_1, z_2}$. That is, 
\begin{equation}
\label{eqn:PF_J6_dual_b}
 L^{(1)}_{z_1, z_2}f =L^{(2)}_{z_1, z_2}f \equiv 0 \mod p\,,
\end{equation}
where $L^{(1)}_{z_1,z_2}, L^{(2)}_{z_1,z_2}$ are the partial differential operators in Equation~(\ref{app2system}) for $2\alpha=2\beta_1=2\beta_2=\gamma_1=\gamma_2=1$.
\end{corollary}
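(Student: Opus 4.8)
The plan is to transcribe the proof of Theorem~\ref{thm:main1} to the elliptic fibration \eqref{eqn:J6_dual_GKZ_twist} on $\mathcal{Z}_{z_1,z_2}$. The argument splits into two steps: first establish the analogue of Lemma~\ref{lem:residue}, namely $|\mathcal{Z}_{z_1,z_2}|_p \equiv c_{p-1} \bmod p$, where $c_{p-1}$ is the Frobenius residue of \eqref{eq:def_of_c_p-1} attached to the holomorphic two-form $\omega = dv \wedge dx/y$ and a generator $s$ of $H^2(\mathcal{O}_{\mathcal{Z}})$ with a pole along the general fiber; then run the Frobenius-pullback argument on the inhomogeneous Picard-Fuchs system satisfied by $\omega$ to deduce that $c_{p-1}$ is annihilated modulo $p$ by $L^{(1)}_{z_1,z_2}$ and $L^{(2)}_{z_1,z_2}$.

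For the first step, $\mathcal{Z}_{z_1,z_2}$ is again a K3 surface, so Lemma~\ref{lem:eltsH2} and the holomorphic Lefschetz fixed-point theorem apply verbatim and give $|\tilde{\mathcal{Z}}|_p = 1 + c_{p-1}$ for the minimal resolution $\tilde{\mathcal{Z}}$. Compactifying \eqref{eqn:J6_dual_GKZ_twist} fiberwise and resolving decomposes the count as $|\tilde{\mathcal{Z}}|_p = |\mathcal{Z}|_p + |\mathcal{Z}_\infty|_p + |\mathcal{Z}_{\mathrm{sing}}|_p$, so one needs $|\mathcal{Z}_\infty|_p + |\mathcal{Z}_{\mathrm{sing}}|_p \equiv 1 \bmod p$. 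By Lemma~\ref{lem:twist} the relevant Weierstrass model has singular fibers of Kodaira type $I_0^*, I_0^*, I_2, I_2, I_2^*$, whence Lemma~\ref{lem:count_singular_fibers} yields $|\mathcal{Z}_{\mathrm{sing}}|_p \equiv 1 + 1 + 0 + 0 + 1 \equiv 3 \bmod p$; it then remains to count the fiberwise points at infinity over the base $\mathbb{P}^1$, subtract those already recorded in the resolved $I_0^*$ and $I_2^*$ fibers, and check that the result is $|\mathcal{Z}_\infty|_p \equiv -2 \bmod p$.

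For the second step, Lemma~\ref{lem:PF_J6_GKZ} tells us every period integral of $\omega$ is annihilated by $L^{(1)}_{z_1,z_2}$ and $L^{(2)}_{z_1,z_2}$, so $\omega$ satisfies inhomogeneous equations $L^{(i)}_{z_1,z_2}\omega = dg_i$ with $g_i = e_i z + \cdots$ in local coordinates $(z,w)$; pulling back by the Frobenius $\operatorname{Fr}_p$ and reducing modulo $p$ annihilates the leading terms of the right-hand sides, since $\frac{d}{dz}z^p = p\,z^{p-1}$, and hence $L^{(i)}_{z_1,z_2}(c_{p-1}) \equiv 0 \bmod p$ for $i = 1, 2$. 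I expect the sole genuine obstacle to lie in the bookkeeping of the first step: the singular fiber configuration of $\mathcal{Z}_{z_1,z_2}$ differs from the $I_2^*, I_2^*, I_2, I_2, I_2, I_2$ configuration of $\mathcal{X}_{\lambda_1,\lambda_2}$ in Lemma~\ref{lem:residue}, so one must carefully separate the points at infinity that are genuinely new from those already lying on a resolved $I_0^*$ or $I_2^*$ fiber, and one must also account for the fact that \eqref{eqn:J6_dual_GKZ_twist} degenerates over $v = 0, 1$ (there $y^2 = 0$), so the affine fibers over those two base points feed into the $I_0^*$ resolutions rather than behaving like generic fibers; a miscount at either place would shift $c_{p-1}$ by a nonzero element of $\mathbb{F}_p$. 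Everything else is a direct transcription of the arguments already established.
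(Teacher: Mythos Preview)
Your proposal is correct and follows essentially the same route as the paper's own proof. The paper computes $|\mathcal{Z}_{\rm sing}|_p\equiv 1+1+1\equiv 3$ from the two $I_0^*$ fibers and the single $I_2^*$ fiber, then $|\mathcal{Z}_\infty|_p=p+1-3\equiv -2$ from over-counting at those same three fibers, and declares the remainder analogous to Theorem~\ref{thm:main1}; this is exactly the bookkeeping you outline, and your second step (Frobenius pullback of the inhomogeneous system coming from Lemma~\ref{lem:PF_J6_GKZ}) is likewise the intended transcription.
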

\begin{proof}
We observe that for $\mathcal{Z}=\mathcal{Z}_{z_1, z_2}$, we have $|\mathcal{Z}_{\rm sing}|_p=1+1+1\equiv 3$ due to the contributions from two $I_0^*$ fibers and one $I_2^*$ fiber. Moreover, we obtain $|\mathcal{Z}_{\infty}|_p=p+1-3\equiv -2$, where $-3$ is due to over-counting at two $I_0^*$ fibers and one $I_2^*$ fiber.  The rest of the proof is then analogous to the proof of Theorem~\ref{thm:main1}.
\end{proof}
\subsection{Counting rational points on Kummer surfaces}
We will now compute the counting function $F(\lambda_1,\lambda_2)=|\mathcal{X}_{\lambda_1,\lambda_2}|_p$ from Theorem~\ref{thm:main1}.  Recall that $|\mathcal{X}_{\lambda_1,\lambda_2}|_p$ is the number of rational points of the family of Kummer surfaces $\mathcal{X}=\mathcal{X}_{\lambda_1,\lambda_2}$  over the finite field $\mathbb{F}_p$ in its affine model given by Equation~(\ref{eqn:J6_orig}). We have the following:
\begin{proposition}
\label{prop:countX}
The following identity holds:
\begin{equation}
\label{eqn:countX}
|\mathcal{X}_{\lambda_1,\lambda_2}|_p \equiv \!\!\!\!\!\!
\sum_{m+n=\frac{p-1}{2}} \sum_{i+j+k+\ell=\frac{p-1}{2}}\!\!\!
\begin{pmatrix}
\frac{p-1}{2} \\
\scriptstyle i\; j\; k\; \ell
\end{pmatrix}
\begin{pmatrix}
\frac{p-1}{2} \\
\scriptstyle m-i\; m-j\; n-k\; n-\ell
\end{pmatrix}
\lambda_1^i \lambda_2^j (\lambda_1\lambda_2)^k \mod p\,.
\end{equation}
\end{proposition}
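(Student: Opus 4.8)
Here is how I would approach the proof.

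The plan is to evaluate $|\mathcal{X}_{\lambda_1,\lambda_2}|_p$ directly from the affine model~\eqref{eqn:J6_orig} by a Gauss‑sum count, in the spirit of Igusa's computation for the Legendre family, and then expand everything combinatorially. Write $N=\tfrac{p-1}{2}$ and denote by $a(t)=t(t-1)(\lambda_2t-\lambda_1)=t\,\alpha(t)$ and $b(t)=t(t-\lambda_1)(\lambda_2t-1)=t\,\beta(t)$ the two non‑zero roots (in $X$) of the cubic in~\eqref{eqn:J6_orig}, where $\alpha(t)=(t-1)(\lambda_2t-\lambda_1)$ and $\beta(t)=(t-\lambda_1)(\lambda_2t-1)$ are quadratics with $\alpha(0)=\beta(0)=\lambda_1$. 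The key structural point I would use is that $t\mid a(t)$ and $t\mid b(t)$, so that $a(t)^\nu b(t)^\mu=t^{\mu+\nu}\alpha(t)^\nu\beta(t)^\mu$; this is precisely what aligns the $t$‑powers in the second summation below.

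First, using $\#\{Y\in\mathbb{F}_p:Y^2=c\}\equiv 1+c^N\bmod p$, summing over the affine base variable $t$ and fibre variable $X$ gives $|\mathcal{X}_{\lambda_1,\lambda_2}|_p\equiv\sum_{t,X\in\mathbb{F}_p}\big(X(X-a(t))(X-b(t))\big)^{N}\bmod p$, the $p^2$ from the ``$1$'' dying mod $p$. I would then perform the $X$‑sum: expand $(X-a(t))^N$ and $(X-b(t))^N$ binomially, multiply by $X^N$, and apply $\sum_{X\in\mathbb{F}_p}X^k\equiv-1$ when $(p-1)\mid k,\ k>0$, and $\equiv0$ otherwise. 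Since the exponent of $X$ is $N+\mu+\nu\in[N,3N]$ and the only positive multiple of $p-1=2N$ there is $2N$, this forces $\mu+\nu=N$ and yields, for every $t$,
\[
\sum_{X}\big(X(X-a(t))(X-b(t))\big)^{N}\;\equiv\;-(-1)^{N}\,t^{N}\!\!\sum_{\mu+\nu=N}\binom{N}{\mu}\binom{N}{\nu}\,\alpha(t)^{\nu}\beta(t)^{\mu}
\]
(no separate treatment of $t=0$ being needed, as this is a formal consequence of the binomial theorem). Next I would do the $t$‑sum: the polynomial $t^{N}\alpha(t)^{\nu}\beta(t)^{\mu}$ has degree $3N<2(p-1)$, so summing over $\mathbb{F}_p$ extracts $-[t^{p-1}]$, i.e.\ $-[t^{N}]\big(\alpha(t)^{\nu}\beta(t)^{\mu}\big)$, giving $|\mathcal{X}_{\lambda_1,\lambda_2}|_p\equiv(-1)^N\sum_{\mu+\nu=N}\binom{N}{\mu}\binom{N}{\nu}[t^{N}]\big(\alpha(t)^{\nu}\beta(t)^{\mu}\big)$.

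Finally I would expand $\alpha(t)^{\nu}\beta(t)^{\mu}=(t-1)^{\nu}(\lambda_2t-\lambda_1)^{\nu}(t-\lambda_1)^{\mu}(\lambda_2t-1)^{\mu}$ by the binomial theorem in each of the four linear factors, with $t$‑exponents $a,b,c,d$ respectively; extracting $[t^N]$ forces $a+b+c+d=N$, the exponent of $\lambda_1$ becomes $(\nu-b)+(\mu-c)=a+d$, that of $\lambda_2$ becomes $b+d$, and the accumulated signs collapse to $(-1)^{N}$, cancelling the prefactor. One is left with $\sum_{\mu+\nu=N}\sum_{a+b+c+d=N}\binom{N}{\mu}\binom{N}{\nu}\binom{\nu}{a}\binom{\nu}{b}\binom{\mu}{c}\binom{\mu}{d}\,\lambda_1^{a+d}\lambda_2^{b+d}$. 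Relabelling $(\nu,\mu,a,b,c,d)=(m,n,i,j,\ell,k)$ (merely interchanging the roles of the last two indices) turns the monomial into $\lambda_1^{i}\lambda_2^{j}(\lambda_1\lambda_2)^{k}$, and the elementary identity $\binom{N}{m}\binom{N}{n}\binom{m}{i}\binom{m}{j}\binom{n}{k}\binom{n}{\ell}=\binom{N}{i,j,k,\ell}\binom{N}{m-i,m-j,n-k,n-\ell}$ — both sides equalling $\tfrac{(N!)^{2}}{i!\,j!\,k!\,\ell!\,(m-i)!\,(m-j)!\,(n-k)!\,(n-\ell)!}$ when $m+n=N=i+j+k+\ell$ — identifies this with the right‑hand side of~\eqref{eqn:countX}.

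The routine parts are the two power‑sum evaluations and the binomial bookkeeping; the points needing care are the range analyses (that $\mu+\nu=N$ and a single $t$‑coefficient survive, which hinges on the bounds $N+\mu+\nu\le 3N<4N$ and $\deg\big(t^{N}\alpha^{\nu}\beta^{\mu}\big)=3N<2(p-1)$) and the consistent sign tracking, which all telescopes to $+1$. I would also note that nothing goes wrong at degenerate fibres (values of $t$ with $a(t)=b(t)$, or the behaviour at $t=0,\infty$): the whole argument is a formal evaluation of a fixed polynomial over $\mathbb{F}_p$, so~\eqref{eqn:countX} holds identically in $\lambda_1,\lambda_2\in\mathbb{F}_p$.
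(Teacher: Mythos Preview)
Your argument is correct and follows essentially the same route as the paper's proof: both perform the $X$-sum first via the power-sum identity to force $\mu+\nu=N$, then expand the four linear factors binomially, extract the $t^{p-1}$ coefficient, and repackage the binomials into the two multinomials via the identity $\binom{N}{m}\binom{N}{n}\binom{m}{i}\binom{m}{j}\binom{n}{k}\binom{n}{\ell}=\binom{N}{i\,j\,k\,\ell}\binom{N}{m-i\;m-j\;n-k\;n-\ell}$. The one organizational difference is that the paper factors $\lambda_2$ out of $(\lambda_2 t-\lambda_1)$ and $(\lambda_2 t-1)$, which forces a separate treatment of the case $\lambda_2\equiv 0$, whereas your factoring $a(t)=t\,\alpha(t)$, $b(t)=t\,\beta(t)$ keeps everything polynomial in $\lambda_1,\lambda_2$ and so handles all values uniformly---a small but genuine streamlining.
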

\begin{proof}
By utilizing standard techniques from~\cite{MR1946768}, in particular the observation in \cite{MR1946768}*{Eq.~(2.28)} given by
\begin{equation}\label{coeff_of_counting_pts} 
\sum_{x\in\mathbb{F}_p} x^k \equiv
\begin{cases}
-1, & \text{if $(p-1)\mid k$} \\
0, & \text{if $(p-1)\!\nmid \,k$}
\end{cases}
\mod p,
\end{equation}
we calculate $|\mathcal{X}|_p=|\mathcal{X}_{\lambda_1,\lambda_2}|_p$ as follows:
\begin {equation*}
\scalemath{0.8}{
\begin{aligned}
& |\mathcal{X}|_p\equiv \sum_{x, t\in \mathbb{F}_p}
	x^{\frac{p-1}{2}} \Big(x-t(t-1)(\lambda_2 t-\lambda_1)\Big)^{\frac{p-1}{2}}
	\Big(x-t(t-\lambda_1)(\lambda_2 t-1)\Big)^{\frac{p-1}{2}} \\
&=\sum_{x, t\in\mathbb{F}_p} x^{\frac{p-1}{2}} \sum_{k=0}^{\frac{p-1}{2}} C^{\frac{p-1}{2}}_k x^k 
	\Big(-t(t-1)(\lambda_2 t-\lambda_1)\Big)^{\frac{p-1}{2}-k} \sum_{\ell=0}^{\frac{p-1}{2}} C^{\frac{p-1}{2}}_\ell x^\ell 
	\Big(-t(t-\lambda_1)(\lambda_2 t-1)\Big)^{\frac{p-1}{2}-\ell}\\
&\equiv -\sum_{t\in\mathbb{F}_p} \sum_{k+\ell=\frac{p-1}{2}} C^{\frac{p-1}{2}}_k C^{\frac{p-1}{2}}_\ell
	\Big(-t(t-1)(\lambda_2 t-\lambda_1)\Big)^{\frac{p-1}{2}-k}
	\Big(-t(t-\lambda_1)(\lambda_2 t-1)\Big)^{\frac{p-1}{2}-\ell} \\
&=-\!\!\!\!\!\!\sum_{k+\ell=\frac{p-1}{2}} (-1)^{\frac{p-1}{2}} C^{\frac{p-1}{2}}_k C^{\frac{p-1}{2}}_\ell
	\sum_{t\in\mathbb{F}_p} t^{\frac{p-1}{2}} \Big((t-1)(\lambda_2 t-\lambda_1)\Big)^{\frac{p-1}{2}-k}
	\Big((t-\lambda_1)(\lambda_2 t-1)\Big)^{\frac{p-1}{2}-\ell},
\end{aligned}}
\end{equation*}
where $C^n_k=\binom{n}{k}$ is the standard binomial coefficient. Setting $m=\frac{p-1}{2}-k$ and $n=\frac{p-1}{2}-\ell$, we obtain
\begin {equation*}
\scalemath{0.8}{
\begin{aligned}
|\mathcal{X}|_p& \equiv -\sum_{m+n=\frac{p-1}{2}} (-1)^{\frac{p-1}{2}} C^{\frac{p-1}{2}}_m C^{\frac{p-1}{2}}_n
	\underbrace{\sum_{t\in\mathbb{F}_p} t^{\frac{p-1}{2}} \big(t-1\big)^m \big(\lambda_2 t-\lambda_1\big)^m
	\big(t-\lambda_1\big)^n \big(\lambda_2 t-1\big)^n}_{(a)}.
\end{aligned}}
\end{equation*}
Let us investigate the contribution $(a)$ in more detail. Let us first assume $\lambda_2\not\equiv 0$. In this case we find
\begin{align*}
(a)&=\sum_{t\in\mathbb{F}_p} t^{\frac{p-1}{2}} \lambda_2^m \big(t-1\big)^m \left(t-\frac{\lambda_1}{\lambda_2} \right)^m
	\lambda_2^n \big(t-\lambda_1\big)^n \left(t-\frac{1}{\lambda_2} \right)^n \\
&=\lambda_2^{\frac{p-1}{2}} \sum_{t\in\mathbb{F}_p} t^{\frac{p-1}{2}} \big(t-1\big)^m \left(t-\frac{\lambda_1}{\lambda_2} \right)^m
	\left(t-\lambda_1\right)^n \left(t-\frac{1}{\lambda_2} \right)^n \\
&=\lambda_2^{\frac{p-1}{2}} \sum_{t\in\mathbb{F}_p} t^{\frac{p-1}{2}} \sum_{i, j=0}^m \sum_{k,\ell=0}^n 
	\begin{pmatrix}
	C^m_i t^i (-1)^{m-i} C^m_j t^j \big( -\frac{\lambda_1}{\lambda_2} \big)^{m-j} \\
	C^n_k t^k (-\lambda_1)^{n-k} C^n_\ell t^\ell \big( -\frac{1}{\lambda_2} \big)^{n-\ell}
	\end{pmatrix} \\
&\equiv -\lambda_2^{\frac{p-1}{2}} \sum_{i+j+k+\ell=\frac{p-1}{2}} C^m_i C^m_j C^n_k C^n_\ell
	(-1)^{m-i} \left( -\frac{\lambda_1}{\lambda_2} \right)^{m-j} 
	(-\lambda_1)^{n-k} \left( -\frac{1}{\lambda_2} \right)^{n-\ell} \,.
\end{align*}
Setting $i_1=m-i, j_1=m-j, k_1=n-k,\ell_1=n-\ell$ implies $i_1+j_1+k_1+\ell_1=\frac{p-1}{2}$. We obtain
\begin{align*}
(a)\equiv -\lambda_2^{\frac{p-1}{2}} \sum_{i+j+k+\ell=\frac{p-1}{2}} C^m_i C^m_j C^n_k C^n_\ell
(-1)^{\frac{p-1}{2}} \left( \frac{\lambda_1}{\lambda_2} \right)^{j} 
	\lambda_1^{k} \left(\frac{1}{\lambda_2} \right)^{\ell} \,.
\end{align*}
We simplify the coefficients $C^m_i C^m_j C^n_k C^n_\ell$ further according to
\begin{align*}
C^m_i C^m_j C^n_k C^n_\ell&= \frac{m!}{i!\, (m-i)!}\cdot \frac{m!}{j!\, (m-j)!}\cdot \frac{n!}{k!\, (n-k)!}
	\cdot \frac{n!}{\ell!\, (n-\ell)!} \\
&=\frac{(\frac{p-1}{2})!}{i!\, j!\, k!\, \ell!}\cdot 
	\frac{(\frac{p-1}{2})!}{(m-i)!\, (m-j)!\, (n-k)!\, (n-\ell)!}\cdot
	\left(\frac{m!\, n!}{(\frac{p-1}{2})!} \right)^2 \\
&=\begin{pmatrix}
\frac{p-1}{2} \\
\scriptstyle i\; j\; k\; \ell
\end{pmatrix}
\begin{pmatrix}
\frac{p-1}{2} \\
\scriptstyle m-i\; m-j\; n-k\; n-\ell
\end{pmatrix}
\frac{1}{C^{\frac{p-1}{2}}_m C^{\frac{p-1}{2}}_n} \,.
\end{align*}
Thus, we can conclude
$$
|\mathcal{X}|_p \equiv \sum_{m+n=\frac{p-1}{2}} \sum_{i+j+k+\ell=\frac{p-1}{2}}
\begin{pmatrix}
\frac{p-1}{2} \\
\scriptstyle i\; j\; k\; \ell
\end{pmatrix}
\begin{pmatrix}
\frac{p-1}{2} \\
\scriptstyle m-i\; m-j\; n-k\; n-\ell
\end{pmatrix}
\underbrace{\lambda_2^{\frac{p-1}{2}} \left( \frac{\lambda_1}{\lambda_2} \right)^{j}
	\lambda_1^{k} \left(\frac{1}{\lambda_2} \right)^{\ell} \,.}_{=\lambda_1^i \lambda_2^j (\lambda_1\lambda_2)^k}
$$
\par When $\lambda_2\equiv 0\mod p$, one can obtain $(a)$ as follows: 
\begin{align*}
(a) &= (-1)^{m+n}  \lambda_1^m  \sum_{u\in\mathbb{F}_p} u^{\frac{p-1}{2}}(u-1)^m (u-\lambda_1 )^n \\
&= (-1)^{\frac{p-1}{2}} \lambda_1^m \sum_{i+j=\frac{p-1}{2}} \sum_{u\in\mathbb{F}_p} u^{\frac{p-1}{2}} 
	C^m_i u^i(-1)^{m-i} C^n_j 
	u^j (-\lambda_1)^{n-j} \\
&= -(-1)^{\frac{p-1}{2}} \sum_{i+j=\frac{p-1}{2}} C^m_i C^n_j 
	(-1)^{\overbrace{\scriptstyle m+n-(i+j)}^{\frac{p-1}{2}-\frac{p-1}{2}=0}} 
	\lambda_1^{\overbrace{\scriptstyle m+n-j}^{\frac{p-1}{2}-j}},
\end{align*}
which in turn implies
\begin{align*}
&|\mathcal{X}|_p 
\equiv \sum_{m+n=\frac{p-1}{2}} \sum_{i+j=\frac{p-1}{2}}
	\frac{(\frac{p-1}{2})!\, (\frac{p-1}{2})!}{\cancel{m!}\, (\frac{p-1}{2}-m)!\, \cancel{n!}\, (\frac{p-1}{2}-n)!}\cdot 
	\frac{\cancel{m!}\, \cancel{n!}}{i!\,(m-i)!\, j!\, (n-j)!} \lambda_1^{\frac{p-1}{2}-j} \\
& = \sum_{m+n=\frac{p-1}{2}} \sum_{i+j=\frac{p-1}{2}}
	\underbrace{\frac{(\frac{p-1}{2})!}{(\frac{p-1}{2}-m)!\, (\frac{p-1}{2}-n)!}}_{C^{\frac{p-1}{2}}_{\frac{p-1}{2}-n}=\, C^{\frac{p-1}{2}}_n} \cdot 
	\underbrace{\frac{(\frac{p-1}{2})!}{i!\,(m-i)!\, j!\, (n-j)!}}_{=C^{\frac{p-1}{2}}_j\text{ and } n=j} 
	\lambda_1^{\frac{p-1}{2}-j} \\
& = \sum_{j=0}^{\frac{p-1}{2}} (C^{\frac{p-1}{2}}_j)^2 \lambda_1^j.
\end{align*}
This is in perfect agreement with Equation~\eqref{eqn:countX} in the special case $j=k=0$, i.e., we have
\begin{align*}
\sum_{m+n=\frac{p-1}{2}} \sum_{i+\ell=\frac{p-1}{2}}
\underbrace{\begin{pmatrix}
\frac{p-1}{2} \\
\scriptstyle i\; 0\; 0\; \ell
\end{pmatrix}}_{=C^{\frac{p-1}{2}}_i}
\underbrace{\begin{pmatrix}
\frac{p-1}{2} \\
\scriptstyle m-i\; m\; n\; n-\ell
\end{pmatrix}}_{=C^\frac{p-1}{2}_m=C^{\frac{p-1}{2}}_i \because\, m=i}
\lambda_1^i 
= \sum_{i=0}^{\frac{p-1}{2}} (C^{\frac{p-1}{2}}_i)^2 \lambda_1^i.
\end{align*}
\end{proof}
\subsection{Counting rational points on the K3 covers}
In this section we will compute the counting function $f(z_1, z_2)=|\mathcal{Z}_{z_1, z_2}|_p$ from Corollary~\ref{cor:main1}.  Recall that $|\mathcal{Z}_{z_1, z_2}|_p$ is the number of rational points of the family of K3 surfaces $\mathcal{Z}=\mathcal{Z}_{z_1, z_2}$ over the finite field $\mathbb{F}_p$ in its affine model given by Equation~(\ref{eqn:J6_dual_GKZ_twist}). We have the following:
\begin{proposition}
\label{prop:countZ}
The following identity holds:
\begin{equation}
\label{eqn:countZ}
|\mathcal{Z}_{z_1, z_2}|_p \equiv \sum_{i+j+k=\frac{p-1}{2}}  
\begin{pmatrix} \frac{p-1}{2} \\ i\; j\; k \end{pmatrix} \binom{\frac{p-1}{2}}{i} \binom{\frac{p-1}{2}}{k}
z_1^i z_2^k
\equiv F_{2,\frac{p-1}{2}}(z_1,z_2) \mod p \,,
\end{equation}
where $F_{2,\frac{p-1}{2}}(z_1,z_2)$ is the truncated Appell series in Equation~(\ref{eqn:2Ftrunc}).
\end{proposition}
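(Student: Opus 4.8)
The plan is to compute $|\mathcal{Z}_{z_1,z_2}|_p$ directly from the affine model in Equation~(\ref{eqn:J6_dual_GKZ_twist}) — a cleaner variant of the computation used for Proposition~\ref{prop:countX} — and then recognize the result as the truncated Appell series. Write $N=\frac{p-1}{2}$ and let $\chi$ denote the quadratic residue character on $\mathbb{F}_p$, with $\chi(0)=0$. For each $(v,x)\in\mathbb{F}_p^2$ the number of $y\in\mathbb{F}_p$ solving $y^2=v(1-v)x(1-x)(1-z_2x-z_1v)$ equals $1+\chi\big(v(1-v)x(1-x)(1-z_2x-z_1v)\big)$, so summing over $(v,x)$ gives $p^2+\sum_{v,x}\chi(\cdots)$; the first term vanishes modulo $p$, and since $\chi(a)\equiv a^{N}\mod p$ for every $a\in\mathbb{F}_p$, I obtain
\[
|\mathcal{Z}_{z_1,z_2}|_p \equiv \sum_{v,x\in\mathbb{F}_p} v^{N}(1-v)^{N}x^{N}(1-x)^{N}\big(1-z_2x-z_1v\big)^{N} \mod p\,.
\]
Next I would expand, applying the multinomial theorem to $(1-z_2x-z_1v)^{N}=\sum_{a+b+c=N}\binom{N}{a\;b\;c}(-1)^{b+c}z_1^{c}z_2^{b}x^{b}v^{c}$ and the binomial theorem to $(1-v)^{N}$ and $(1-x)^{N}$. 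This presents the double sum as a linear combination of monomials $v^{N+c+d}x^{N+b+e}$ with $0\le d,e\le N$ and overall sign $(-1)^{b+c+d+e}$. I then invoke Equation~(\ref{coeff_of_counting_pts}): since $1\le N\le N+c+d\le 3N<2(p-1)$, and likewise for the exponent of $x$, the only multiple of $p-1$ in that range is $p-1=2N$ itself, so the only surviving monomials are those with $c+d=N$ and $b+e=N$, and the two power sums contribute a combined factor $(-1)(-1)=1$. Putting $d=N-c$, $e=N-b$ and $a=N-b-c$, the remaining sign $(-1)^{b+c+d+e}=(-1)^{2N}=1$, the binomials collapse via $\binom{N}{N-c}=\binom{N}{c}$ and $\binom{N}{N-b}=\binom{N}{b}$, and the index set becomes $\{(b,c):b,c\ge 0,\ b+c\le N\}$. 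Relabelling $(c,a,b)\mapsto(i,j,k)$ with $i+j+k=N$ produces exactly the multinomial sum in Equation~(\ref{eqn:countZ}).

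For the second congruence I would use the elementary fact that, because $N\equiv-\frac{1}{2}\mod p$ while $r!$ is invertible modulo $p$ for $0\le r\le N$,
\[
\binom{N}{r}\equiv\binom{-\frac{1}{2}}{r}=\frac{(-1)^{r}\,(\frac{1}{2})_{r}}{r!}\mod p\,.
\]
Factoring $\binom{N}{i\;j\;k}=\binom{N}{i+k}\binom{i+k}{i}$ and applying this congruence to $\binom{N}{i+k}$, $\binom{N}{i}$ and $\binom{N}{k}$, every power of $-1$ squares away, leaving
\[
\binom{N}{i\;j\;k}\binom{N}{i}\binom{N}{k}\equiv\frac{(\frac{1}{2})_{i+k}\,(\frac{1}{2})_{i}\,(\frac{1}{2})_{k}}{(i!)^{2}\,(k!)^{2}}\mod p\,.
\]
Since $(1)_i=i!$ and $(1)_k=k!$, the right-hand side is exactly the coefficient of $z_1^{i}z_2^{k}$ in $F_{2,N}(z_1,z_2)$, and the two index sets agree (each amounts to $i+k\le N$); this gives the claimed identity.

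The argument is essentially mechanical, so I expect the only step that demands genuine care to be the monomial bookkeeping in the middle paragraph -- pinning down which terms survive the power-sum reduction of Equation~(\ref{coeff_of_counting_pts}) and checking that the accumulated signs and the collapsed binomial coefficients line up. I would also note that no $0^{0}$ ambiguity enters, since every exponent of $v$ and of $x$ occurring in the expansion is at least $N\ge 1$, so each summand $\sum_{v}v^{\bullet}$ and $\sum_{x}x^{\bullet}$ is governed by Equation~(\ref{coeff_of_counting_pts}) without exception.
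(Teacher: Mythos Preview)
Your proof is correct and follows the same overall strategy as the paper --- expand the affine model to the $N$-th power, reduce via the power-sum identity in Equation~(\ref{coeff_of_counting_pts}), and then identify the resulting coefficients with those of the truncated Appell series. The one genuine difference is in execution: the paper first factors $(1-z_2x-z_1v)=(-z_2)\big(x-\tfrac{1-z_1v}{z_2}\big)$ and proceeds by nested binomial expansions, which forces the assumption $z_2\not\equiv0$ and then requires separate verifications of Equation~(\ref{eqn:countZ}) in the degenerate cases $z_2\equiv0$, $z_1\equiv0$, and $z_1\equiv z_2\equiv0$. Your direct multinomial expansion of $(1-z_2x-z_1v)^{N}$ never divides by $z_1$ or $z_2$, so it handles all cases uniformly and is strictly cleaner; the trade-off is only that the paper's intermediate step makes the link with the one-variable Legendre count in Equation~(\ref{eqn:2F1trunc}) more visible when one modulus degenerates.
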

\begin{proof}
Using standard techniques, we calculate $|\mathcal{Z}|_p$ as follows:
\begin{align*}
|\mathcal{Z}|_p &\equiv \sum_{x, v\in \mathbb{F}_p} \Big(v\big(v-1\big)x\big(x-1\big)\big(1-z_2x-z_1v\big)\Big)^{\frac{p-1}{2}} \\
&= \sum_{v\in \mathbb{F}_p} v^{\frac{p-1}{2}}\big(v-1\big)^{\frac{p-1}{2}} \underbrace{\sum_{x\in\mathbb{F}_p} x^{\frac{p-1}{2}} \big(x-1\big)^{\frac{p-1}{2}}
	(-z_2)^{\frac{p-1}{2}} \left( x-\frac{1-z_1v}{z_2} \right)^{\frac{p-1}{2}} \,}_{(b)}
\end{align*}
if $z_2\not\equiv 0\mod p$. Let us investigate the contribution $(b)$ in more detail. We first assume $z_2,z_1 \not\equiv 0$. Then, we find
\begin{align*}
(b)&\equiv -(-z_2)^{\frac{p-1}{2}} \sum_{k+\ell=\frac{p-1}{2}} C^{\frac{p-1}{2}}_k (-1)^{\frac{p-1}{2}-k}
	C^{\frac{p-1}{2}}_\ell \left(-\frac{1-z_1v}{z_2} \right)^{\frac{p-1}{2}-\ell} \\
&= -(-z_2)^{\frac{p-1}{2}} \sum_{k+\ell=\frac{p-1}{2}} C^{\frac{p-1}{2}}_k C^{\frac{p-1}{2}}_\ell
	(-1)^k \left(-\frac{1-z_1v}{z_2} \right)^\ell \,.
\end{align*}
This implies
\begin {equation*}
\scalemath{0.9}{
\begin{aligned}
|\mathcal{Z}|_p &\equiv -(-z_2)^{\frac{p-1}{2}} \sum_{k+\ell=\frac{p-1}{2}} 
	C^{\frac{p-1}{2}}_k C^{\frac{p-1}{2}}_\ell \sum_{v\in\mathbb{F}_p} v^{\frac{p-1}{2}} \big(v-1\big)^{\frac{p-1}{2}}
	(-1)^k \left(\frac{z_1}{z_2} \right)^\ell \left(v-\frac{1}{z_1}\right)^\ell \\
&\equiv (-z_2)^{\frac{p-1}{2}} \sum_{k+\ell=\frac{p-1}{2}} C^{\frac{p-1}{2}}_k C^{\frac{p-1}{2}}_\ell
	(-1)^k \left(\frac{z_1}{z_2} \right)^\ell \sum_{\substack{i+j=\frac{p-1}{2}, \\ j\le\ell }}
	C^{\frac{p-1}{2}}_i (-1)^{\frac{p-1}{2}-i} C^\ell_j \left(-\frac{1}{z_1} \right)^{\ell-j} \,.
\end{aligned}}
\end{equation*}
Setting $i_1=\frac{p-1}{2}-i, j_1=\frac{p-1}{2}-j$ implies $i_1+j_1=\ell, j_1\ge 0$, and we obtain
\begin{align*}
|\mathcal{Z}|_p  &\equiv (-z_2)^{\frac{p-1}{2}} \sum_{k+\ell=\frac{p-1}{2}} 
	C^{\frac{p-1}{2}}_k C^{\frac{p-1}{2}}_\ell (-1)^k \left(\frac{z_1}{z_2} \right)^\ell
	\sum_{i+j=\ell} C^{\frac{p-1}{2}}_i C^\ell_j (-1)^i \left(-\frac{1}{z_1} \right)^j \\
&=  (-z_2)^{\frac{p-1}{2}} \underbrace{\sum_{k+\ell=\frac{p-1}{2}} \sum_{i+j=\ell}}_{i+j+k=\frac{p-1}{2}}
	C^{\frac{p-1}{2}}_k C^{\frac{p-1}{2}}_\ell C^{\frac{p-1}{2}}_i C^\ell_j
	\underbrace{(-1)^{i+j+k}}_{=(-1)^{\frac{p-1}{2}}} 
	\left(\frac{z_1}{z_2} \right)^\ell \left(\frac{1}{z_1} \right)^j \\
&= \sum_{i+j+k=\frac{p-1}{2}} C^{\frac{p-1}{2}}_\ell C^\ell_j C^{\frac{p-1}{2}}_k C^{\frac{p-1}{2}}_i \,
	\underbrace{z_2^{\frac{p-1}{2}} \left(\frac{z_1}{z_2} \right)^\ell \left(\frac{1}{z_1} \right)^j}_{=z_2^k z_1^i} \,.
\end{align*}
We simplify the coefficients $C^{\frac{p-1}{2}}_\ell C^\ell_j C^{\frac{p-1}{2}}_k C^{\frac{p-1}{2}}_i$ further. For $N=\frac{p-1}{2}$ we obtain
\begin{align*}
&C^{\frac{p-1}{2}}_\ell C^\ell_j C^{\frac{p-1}{2}}_k C^{\frac{p-1}{2}}_i
=\frac{N!}{\cancel{\ell!}\, \underbrace{(N-\ell)!}_{=k!}}\cdot 
	\frac{\cancel{\ell !}}{j!\, \underbrace{(\ell-j)!}_{=i!}}\, C^{\frac{p-1}{2}}_k C^{\frac{p-1}{2}}_i 
	= \begin{pmatrix} N \\ i\; j\; k \end{pmatrix} \binom{N}{k} \binom{N}{i} \\
&=\frac{\overbrace{\scriptstyle \frac{p-1}{2}}^{\equiv -\frac{1}{2}} 
	\overbrace{\scriptstyle (\frac{p-1}{2}-1)}^{\equiv-\frac{1}{2}-1}\cdots \overbrace{\scriptstyle (j+1)}^{=N-N+j+1}}{k!\, i!}
	\cdot \frac{\overbrace{N(N-1)\cdots (N-k+1)}^{\text{$k$ terms}}\cdot 
	\overbrace{N(N-1)\cdots (N-i+1)}^{\text{$i$ terms}}}{k!\, i!} \\
&\equiv \frac{\overbrace{\scriptstyle \frac{1}{2}(\frac{1}{2}+1)
	\cdots (\frac{1}{2}+(N-j)-1)}^{\text{$N-j=i+k$ terms}}}{k!\, i!}
	\cdot \frac{\frac{1}{2}(\frac{1}{2}+1)\cdots(\frac{1}{2}+k-1)\cdot
	\frac{1}{2}(\frac{1}{2}+1)\cdots(\frac{1}{2}+i-1)}{k!\, i!} \\
&=\frac{(\frac{1}{2})_{i+k} (\frac{1}{2})_k (\frac{1}{2})_i}{(1)_k\,(1)_i\, k!\, i!} \,.
\end{align*}
Thus, we can conclude
\begin{align*}
|\mathcal{Z}_{z_2, z_1}|_p  \equiv \sum_{i+j+k=\frac{p-1}{2}} 
	\begin{pmatrix} \frac{p-1}{2} \\ i\; j\; k \end{pmatrix} \binom{\frac{p-1}{2}}{k} \binom{\frac{p-1}{2}}{i}\,
	z_2^k z_1^i 
=\sum_{k=0}^{\frac{p-1}{2}} \sum_{i+j=k} 
	\frac{(\frac{1}{2})_{i+j} (\frac{1}{2})_i (\frac{1}{2})_j}{(1)_i\,(1)_j\, i!\, j!} \,
	z_2^i z_1^j \,.
\end{align*}
We observe that the latter is precisely the truncated Appell series in Equation~(\ref{eqn:2Ftrunc}).
\par Let us also consider the remaining case $z_2\equiv 0 \mod{p}$. In this case the counting function becomes
\begin{align*}
&|\mathcal{Z}|_p \equiv \sum_{x,u\in\mathbb{F}_p} \big( u(u-1)x(x-1)(1-z_1u) \big)^{\frac{p-1}{2}} \\
&= \sum_{u\in\mathbb{F}_p} \big( u(u-1)(1-z_1u) \big)^{\frac{p-1}{2}} \sum_{x\in\mathbb{F}_p} x^{\frac{p-1}{2}}(x-1)^{\frac{p-1}{2}} \\
&\equiv \sum_{i+j=\frac{p-1}{2}} C^{\frac{p-1}{2}}_i \underbrace{(-1)^{\frac{p-1}{2}-i}}_{=(-1)^{\frac{p-1}{2}+i}}
	C^{\frac{p-1}{2}}_j (-z_1)^j = \sum_{i+j=\frac{p-1}{2}} 
	\underbrace{C^{\frac{p-1}{2}}_i C^{\frac{p-1}{2}}_j}_{=\big(C^{\frac{p-1}{2}}_j \big)^2} z_1^j,
\end{align*}
which is in agreement with Equation~\eqref{eqn:countZ} for $i=0$ since in this case we have $j=k$ and
$$
\sum_{k=0}^{\frac{p-1}{2}} 
	\frac{(\frac{1}{2})_{k} (\frac{1}{2})_0 (\frac{1}{2})_k}{(1)_0\,(1)_k\, 0!\, k!} z_1^k
= \sum_{k=0}^{\frac{p-1}{2}} \frac{(\frac{p-1}{2})_{k} (\frac{p-1}{2})_k}{k!\, k!} z_1^k
= \sum_{k=0}^{\frac{p-1}{2}} \big(C^{\frac{p-1}{2}}_k \big)^2 z_1^k.
$$
In the case $z_1\equiv 0$, the calculation is analogous to the case $z_2\equiv 0$. If $z_2\equiv z_1 \equiv 0$, the counting function is given by
$$
|\mathcal{Z}|_p \equiv \sum_{u, x\in\mathbb{F}_p} \big( u(u-1)x(x-1) \big)^{\frac{p-1}{2}}
= \sum_{u\in\mathbb{F}_p} u^{\frac{p-1}{2}}(u-1)^{\frac{p-1}{2}} \sum_{x\in\mathbb{F}_p} x^{\frac{p-1}{2}}(x-1)^{\frac{p-1}{2}} = 1,
$$
which is again consistent with Equation~\eqref{eqn:countZ} for $i=j=k=0$, that is
$$
\frac{(\frac{1}{2})_{0} (\frac{1}{2})_0 (\frac{1}{2})_0}{(1)_0\,(1)_0\, 0!\, 0!} = 1.
$$
\end{proof}
\section{Proof of the main theorems}
\label{sec:proofs}
Theorem~\ref{thm:main1} implies Theorem~\ref{thm:main1_intro} and was already proved in Section~\ref{sec:counting}. We now prove Theorem~\ref{thm:main2_intro}. Let $p$ be a prime, recall the transformation between moduli parameters introduced in Equation~(\ref{eqn:moduli}), i.e.,
$$
  (z_1, z_2) = \left(\frac { 4\, k_1k_2}{ \left( k_1+k_2 \right) ^{2}},  -{\frac { \left( k^2_1-1 \right)\left( k^2_2-1 \right)  }{ \left( k_1+k_2 \right) ^{2}}}\right) \;.
$$
We have the following:
\begin{theorem}[Affine points counting]
Let $p$ be an odd prime, $k_1, k_2 \in \mathbb{Q}$ with $k_1+k_2\not= 0$. The following identity holds:
\begin{equation}
\label{eqn:main_counting}
\begin{split}
&\sum_{m+n=\frac{p-1}{2}} \sum_{i+j+k+\ell=\frac{p-1}{2}}
\begin{pmatrix}
\frac{p-1}{2} \\
\scriptstyle i\; j\; k\; \ell
\end{pmatrix}
\begin{pmatrix}
\frac{p-1}{2} \\
\scriptstyle m-i\; m-j\; n-k\; n-\ell
\end{pmatrix}
(k_1^2)^i (k_2^2)^j (k_1^2 k_2^2)^k \\
&\equiv (-1)^{\frac{p-1}{2}}\sum_{i+j+k=\frac{p-1}{2}}
\begin{pmatrix}
\frac{p-1}{2} \\
\scriptstyle i\; j\; k
\end{pmatrix}
\begin{pmatrix}
\frac{p-1}{2} \\
\scriptstyle i
\end{pmatrix}
\begin{pmatrix}
\frac{p-1}{2} \\
\scriptstyle j
\end{pmatrix}
z_1^i z_2^j
\mod p \,.
\end{split}
\end{equation}
\end{theorem}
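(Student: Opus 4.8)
The plan is to recognize the two sides of \eqref{eqn:main_counting} as the two rational-point counts computed in Section~\ref{sec:counting}, and then to prove the resulting congruence between those counts geometrically, by factoring the relation through the auxiliary K3 surface $\mathcal{Y}_{\lambda_1,\lambda_2}$. First, Proposition~\ref{prop:countX} specialized at $\lambda_1=k_1^2$, $\lambda_2=k_2^2$ identifies the left-hand side of \eqref{eqn:main_counting} with $|\mathcal{X}_{k_1^2,k_2^2}|_p\bmod p$, since $\lambda_1^i\lambda_2^j(\lambda_1\lambda_2)^k=(k_1^2)^i(k_2^2)^j(k_1^2k_2^2)^k$. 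Proposition~\ref{prop:countZ}, after relabeling the summation indices $j\leftrightarrow k$ (both the multinomial coefficient $\binom{\frac{p-1}{2}}{i\;j\;k}$ and the constraint $i+j+k=\frac{p-1}{2}$ are symmetric in $j$ and $k$), identifies the sum on the right-hand side of \eqref{eqn:main_counting} with $|\mathcal{Z}_{z_1,z_2}|_p\bmod p$ for $(z_1,z_2)$ given by \eqref{eqn:moduli}. Hence \eqref{eqn:main_counting} is equivalent to $|\mathcal{X}_{k_1^2,k_2^2}|_p\equiv(-1)^{\frac{p-1}{2}}|\mathcal{Z}_{z_1,z_2}|_p\bmod p$. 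By Lemma~\ref{lem:residue} for $\mathcal{X}$, and by the argument in the proof of Corollary~\ref{cor:main1} for $\mathcal{Z}$ (in both cases the contributions of the points at infinity and of the singular fibers sum to $1$), this is equivalent in turn to $c_{p-1}^{\mathcal{X}}\equiv(-1)^{\frac{p-1}{2}}c_{p-1}^{\mathcal{Z}}\bmod p$, where $c_{p-1}^{\bullet}$ denotes the trace of $\operatorname{Fr}_p^*$ on $H^2(\bullet,\mathcal{O}_\bullet)$.

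I would then establish this last congruence in two steps. By Proposition~\ref{prop:double_cover} there is a degree-two rational map $\Psi_{\lambda_1,\lambda_2}:\mathcal{X}_{\lambda_1,\lambda_2}\dasharrow\mathcal{Y}_{\lambda_1,\lambda_2}$ with $\Psi^*\omega=\Omega$; since $\mathcal{X}$ and $\mathcal{Y}$ are K3 surfaces, their spaces $H^2(\mathcal{O})\cong H^{0,2}$ are one-dimensional, $\Psi^*$ is a Frobenius-equivariant isomorphism between them, and so $c_{p-1}^{\mathcal{X}}=c_{p-1}^{\mathcal{Y}}$. By Lemma~\ref{lem:twist}, the surface $\mathcal{Z}_{z_1,z_2}$ is the constant quadratic twist of $\mathcal{Y}_{\lambda_1,\lambda_2}$ by $i(k_1+k_2)$; over $\mathbb{F}_p$ this is the quadratic twist by $-(k_1+k_2)^2$, which---since $k_1+k_2\not\equiv0\bmod p$, the condition under which $z_1,z_2$ are defined---differs from $-1$ by the nonzero square $(k_1+k_2)^2$. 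A constant quadratic twist by $d\in\mathbb{F}_p^{\times}$ multiplies the action of $\operatorname{Fr}_p^*$ on the transcendental part of $H^2$, and in particular on $H^2(\mathcal{O})$, by the Legendre symbol of $d$; hence $c_{p-1}^{\mathcal{Z}}=\big(\tfrac{-1}{p}\big)c_{p-1}^{\mathcal{Y}}=(-1)^{\frac{p-1}{2}}c_{p-1}^{\mathcal{X}}$, which gives the claim since $\big((-1)^{\frac{p-1}{2}}\big)^2=1$. Alternatively, this last step can be carried out by hand: substitute the base change $\psi_{\lambda_1,\lambda_2}$ of \eqref{eqn:psi} into the sums defining $|\mathcal{X}|_p$ and $|\mathcal{Z}|_p$ and use the identity $\operatorname{disc}_t\!\big(\psi_{\lambda_1,\lambda_2}(t)-u\big)=(1-\lambda_1)(1-\lambda_2)\,f_{\lambda_1,\lambda_2}(u)$ together with Lemma~\ref{lem:count_singular_fibers}; this is elementary but bookkeeping-heavy.

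I expect the main obstacle to be exactly this last point. Turning the clean cohomological identity $c_{p-1}^{\mathcal{X}}=c_{p-1}^{\mathcal{Y}}$ into a statement about the affine counts attached to the specific Weierstrass models \eqref{eqn:J6_orig}, \eqref{eqn:J6_dual_GKZ} and \eqref{eqn:J6_dual_GKZ_twist} requires matching the contributions of the points at infinity, of the ramification of $\psi_{\lambda_1,\lambda_2}$, and of the singular fibers of all three surfaces, and then separately disposing of the degenerate parameter values where some $\lambda_i$ or $z_i$ reduces to $0$ or $1$ modulo $p$ (for which Propositions~\ref{prop:countX} and \ref{prop:countZ} already record the relevant special cases) and of the excluded locus $k_1+k_2\equiv0\bmod p$ (where the right-hand side of \eqref{eqn:main_counting} is undefined). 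Once this bookkeeping is in place the identity---and with it Theorem~\ref{thm:main2_intro}---follows.
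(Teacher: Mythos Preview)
Your proposal is correct and reaches the same conclusion, but the route you take to the key step $|\mathcal{X}_{k_1^2,k_2^2}|_p\equiv(-1)^{\frac{p-1}{2}}|\mathcal{Z}_{z_1,z_2}|_p$ differs from the paper's. The paper argues via Manin's unity and the Picard--Fuchs system: since the counting functions for $\mathcal{X}$ and $\mathcal{Y}$ both satisfy the Appell system (Theorem~\ref{thm:main1}, Remark~\ref{rem:all_families}), and since the holomorphic solution in the polydisc is unique up to scalar, one obtains $|\mathcal{X}|_p\equiv c\,|\mathcal{Y}|_p$ for an a~priori undetermined constant $c$; the twist relating $\mathcal{Y}$ to $\mathcal{Z}$ is then handled by the direct computation $|\mathcal{Y}|_p\equiv\bigl((-1)(k_1+k_2)^2\bigr)^{\frac{p-1}{2}}|\mathcal{Z}|_p$, and finally $c$ is pinned down to $1$ by evaluating both sides at the special point $(k_1,k_2)=(1,0)$, where $(z_1,z_2)=(0,0)$ and the Vandermonde-type identity $\sum_i\binom{\frac{p-1}{2}}{i}^2=\binom{p-1}{\frac{p-1}{2}}\equiv(-1)^{\frac{p-1}{2}}$ finishes the job. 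Your argument instead bypasses the Picard--Fuchs uniqueness step and the special-value evaluation altogether: you use $\Psi^*\omega=\Omega$ directly to conclude that $\Psi^*$ is a Frobenius-equivariant isomorphism on the one-dimensional $H^2(\mathcal{O})$, whence $c_{p-1}^{\mathcal{X}}=c_{p-1}^{\mathcal{Y}}$ on the nose (so the constant is automatic), and you phrase the twist step cohomologically via the Legendre symbol $\bigl(\tfrac{-1}{p}\bigr)$ rather than via Fermat's little theorem applied to $(k_1+k_2)^{p-1}$. Your route is more conceptual and sidesteps the somewhat informal $p\to\infty$ limit in the paper's argument; the paper's route is more elementary and self-contained, staying within the residue framework of Lemma~\ref{lem:residue} and not invoking functoriality of the Frobenius trace under dominant rational maps of K3 surfaces.
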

\begin{proof}
We have established Manin's unity for the families $\mathcal{X}_{\lambda_1,\lambda_2}$ and $\mathcal{Z}_{z_1, z_2}$ in Theorem~\ref{thm:main1} and Corollary~\ref{cor:main1}, respectively.  In Proposition~\ref{prop:countX}, we computed $|\mathcal{X}_{\lambda_1,\lambda_2}|_p$, i.e., the number of rational points of the family of Kummer surfaces $\mathcal{X}=\mathcal{X}_{\lambda_1,\lambda_2}$  over the finite field $\mathbb{F}_p$ in its affine model given by Equation~(\ref{eqn:J6_orig}). It follows from Proposition~\ref{prop:double_cover} that for every point $\{\lambda_1, \lambda_2\} \in \mathcal{M}$ of the moduli space the Kummer surface $\mathcal{X}_{\lambda_1,\lambda_2}=\operatorname{Kum}(E_1\times E_2)$ is the branched double cover of the K3 surface $\mathcal{Y}_{\lambda_1, \lambda_2}$ such that the holomorphic two-forms $\Omega$ and $\omega$ satisfy $\Psi_{\lambda_1,\lambda_2}^* \omega = \Omega$. Thus, their Picard-Fuchs systems are the same, namely the Appell hypergeometric system; see Remark~\ref{rem:all_families}. A solution is unique (up to scalar) once a particular behavior is prescribed along the branch locus given in Equation~\eqref{app2sing}. In fact, the Appell series is such a unique solution, namely the one holomorphic in the polydisc $|z_1|, |z_2|<1$. We set $F_p(k_1,k_2)=|\mathcal{X}|_p$ and $F(k_1,k_2)=\lim{\scriptstyle p\rightarrow\infty} F_p(k_1,k_2)$, as well as $f_p(k_1,k_2)=|\mathcal{Y}|_p$ and $f(k_1,k_2)=\lim{\scriptstyle p\rightarrow\infty} f_p(k_1,k_2)$. However, $F$ and $f$ are the period integrals of the homomorphic two-forms $\Omega$ and $\omega$, respectively, and agree up to scalar with the aforementioned holomorphic solution; it follows from Lemma~\ref{lem:PF_J4}, Lemma~\ref{lem:PF_J6_GKZ}, and Remark~\ref{rem:all_families} that $F(k_1,k_2)=cf(k_1,k_2)$ for some constant $c$. Thus, by Manin's principle we have  $|\mathcal{X}_{\lambda_1,\lambda_2}|_p \equiv c \, |\mathcal{Y}_{\lambda_1,\lambda_2}|_p \mod p$.
\par  We have computed $|\mathcal{Z}_{z_1, z_2}|_p$, i.e., the number of rational points of $\mathcal{Z}_{z_1, z_2}$ in its affine model given by Equation~(\ref{eqn:J6_dual_GKZ_twist}), in Proposition~\ref{prop:countZ}. Moreover, it follows from Lemma~\ref{lem:twist} that over the covering $\widetilde{\mathcal{M}}$ of the moduli space in Equation~(\ref{eqn:ModuliSpace_cover}), the families $\mathcal{Y}_{\lambda_1, \lambda_2}$ and $\mathcal{Z}_{z_1, z_2}$ are isomorphic with $(z_1,z_2)$ given by Equation~(\ref{eqn:moduli}). The difference between the isomorphic affine model for $\mathcal{Y}_{\lambda_1, \lambda_2}$ in Equation~(\ref{eqn:J6_dual_GKZ}) and the affine model for $\mathcal{Z}_{z_1, z_2}$ in Equation~(\ref{eqn:J6_dual_GKZ_twist}) is a factor $(-1) (k_1+k_2)^2$ on the right hand side. Thus, the rational point count $|\mathcal{Y}_{\lambda_1,\lambda_2}|_p$  is computed completely analogous to what was carried out in the proof of Proposition~\ref{prop:countZ}.
We obtain
\begin{equation}
\label{eqn:test}
 |\mathcal{Y}_{\lambda_1,\lambda_2}|_p \equiv  \Big((-1) \big(k_1+k_2)^2\Big)^{\frac{p-1}{2}} |\mathcal{Z}_{z_1, z_2}|_p \mod p \,.
\end{equation}
We observe that
\[ |\mathcal{Z}_{z_1, z_2}|_p \equiv F_{2,\frac{p-1}{2}}(z_1,z_2) =  \sum_{i+j+k=\frac{p-1}{2}}
\begin{pmatrix}
\frac{p-1}{2} \\
\scriptstyle i\; j\; k
\end{pmatrix}
\begin{pmatrix}
\frac{p-1}{2} \\
\scriptstyle i
\end{pmatrix}
\begin{pmatrix}
\frac{p-1}{2} \\
\scriptstyle j
\end{pmatrix}
z_1^i z_2^j \]
is the truncated Appell series in Equation~(\ref{eqn:2Ftrunc}). Therefore, setting $\lambda_i=k_i^2$ for $i=1,2$ in Equation~(\ref{eqn:countX}) yields the identity
\begin{equation}
\label{eqn:test2}
\begin{split}
&\sum_{m+n=\frac{p-1}{2}} \sum_{i+j+k+\ell=\frac{p-1}{2}}
\begin{pmatrix}
\frac{p-1}{2} \\
\scriptstyle i\; j\; k\; \ell
\end{pmatrix}
\begin{pmatrix}
\frac{p-1}{2} \\
\scriptstyle m-i\; m-j\; n-k\; n-\ell
\end{pmatrix}
(k_1^2)^i (k_2^2)^j (k_1^2 k_2^2)^k \\
&\equiv c \, (-1)^{\frac{p-1}{2}}\sum_{i+j+k=\frac{p-1}{2}}
\begin{pmatrix}
\frac{p-1}{2} \\
\scriptstyle i\; j\; k
\end{pmatrix}
\begin{pmatrix}
\frac{p-1}{2} \\
\scriptstyle i
\end{pmatrix}
\begin{pmatrix}
\frac{p-1}{2} \\
\scriptstyle j
\end{pmatrix}
z_1^i z_2^j
\mod p \,,
\end{split}
\end{equation}
for some constant $c$.  We now derive $c$ by substituting special values of $k_1$ and $k_2$. In fact, taking $k_1=1,k_2=0$ in Equation~(\ref{eqn:moduli}) implies $z_1=0,z_2=0$, and the right hand side of Equation~\eqref{eqn:test2} becomes $c \, (-1)^{\frac{p-1}{2}} \mod p$. Regarding the left hand side of Equation~(\ref{eqn:test2}), we have
$$
\sum_{m+n=\frac{p-1}{2}}\sum_{i+\ell=\frac{p-1}{2}}
\begin{pmatrix}
\frac{p-1}{2} \\
\scriptstyle i\; 0\; 0\; \ell
\end{pmatrix}
\underbrace{\begin{pmatrix}
\frac{p-1}{2} \\
\scriptstyle m-i\; m\; n\; n-\ell
\end{pmatrix}}_{\substack{\neq 0 \text{ only when } \\ m=i, n=\ell \,\because m+n=\frac{p-1}{2}}}
=\sum_{i=0}^{\frac{p-1}{2}} \binom{\frac{p-1}{2}}{\scriptstyle i}^2
\equiv (-1)^{\frac{p-1}{2}}
$$
because $\sum_{i=0}^r \binom{r}{i}^2 = \binom{2r}{r}$, especially, when $r=\frac{p-1}{2}$, we have
\begin{align*}
&\sum_{i=0}^{\frac{p-1}{2}} \binom{\frac{p-1}{2}}{\scriptstyle i}^2
=\binom{\scriptstyle p-1}{\frac{p-1}{2}} = \frac{(p-1)!}{\frac{p-1}{2}!\, \frac{p-1}{2}!} \\
&=\frac{(-1)^{\frac{p-1}{2}}(p-1)!}{\frac{p-1}{2}!\, (-1)(-2)\cdots (-\frac{p-1}{2})}
\equiv \frac{(-1)^{\frac{p-1}{2}}(p-1)!}{(p-1)!} = (-1)^{\frac{p-1}{2}},
\end{align*}
whence $c=1$. This proves the theorem.
\end{proof}
\begin{bibdiv}
\begin{biblist}
\bibselect{ref}
\end{biblist}
\end{bibdiv}
\end{document}